\newcommand{\R}{\mathbb{R}}
\newcommand{\C}{\mathbb{C}}
\newcommand{\N}{\mathbb{N}}
\newcommand{\T}{\mathbb{T}}
\newcommand{\Z}{\mathbb{Z}}
\newcommand{\Aut}{\operatorname{Aut}}
\newcommand{\TCL}{\mathcal{T}C^*(\Lambda)}
\newcommand{\TT}{\mathcal{T}}
\newcommand{\OO}{\mathcal{O}}
\newcommand{\NO}{\mathcal{NO}}
\newcommand{\KK}{\mathcal{K}}
\newcommand{\LL}{\mathcal{L}}
\newcommand{\HH}{\mathcal{H}}
\newcommand{\I}{\mathcal{I}}
\newcommand{\J}{\mathcal{J}}
\newcommand{\NT}{\mathcal{NT}}
\newcommand{\cov}{\operatorname{cov}}
\newcommand{\id}{\operatorname{id}}
\newcommand{\supp}{\operatorname{supp}}
\newcommand{\clsp}{\overline{\operatorname{span}}}
\newcommand{\lspan} {\operatorname{span}}
\newtheorem{thm}{Theorem}
\newtheorem{lemma}[thm]{Lemma}
\newtheorem{cor}[thm]{Corollary}
\newtheorem{prop}[thm]{Proposition}
\theoremstyle{definition}
\newtheorem{example}[thm]{Example}
\newtheorem{remark}[thm]{Remark}
\numberwithin{equation}{section}
\numberwithin{thm}{section}
\title[KMS states]{\boldmath{KMS states  on $C^*$-algebras associated to\\ a family of $*$-commuting local homeomorphisms}}
\date{24 March, 2017; with corrections April 2018.}
\author{Zahra Afsar}
\author{Astrid an Huef}
\author{Iain Raeburn}
\address{Zahra Afsar, School of Mathematics and Applied Statistics, University of Wollongong,
NSW 2522, Australia}
\email{zafsar@uow.edu.au}
\address{Astrid an Huef and Iain Raeburn, Department of Mathematics and Statistics, University of Otago, PO Box~56, Dunedin 9054, New Zealand}
\address{Current address for Astrid an Huef and Iain Raeburn: School of Mathematics and Statistics, Victoria University of Wellington, PO Box~56, Wellington 6140, New Zealand}
\email{{astrid.anhuef, iain.raeburn}@vuw.ac.nz}
\thanks{This research was supported by grant 15-UOO-071 from the Marsden Fund of the Royal Society of New Zealand.}
\begin{document}

\begin{abstract}
We consider a family of $*$-commuting local homeomorphisms on a compact space, and build a compactly aligned  product system of Hilbert bimodules. The Nica-Toeplitz algebra of this system carries a gauge 
action of a higher-dimensional torus, and there are many possible dynamics obtained by composing with different embeddings of the real line in this torus. We study the KMS states of these dynamics. For large inverse temperatures including $\infty$,  we describe the  simplex of KMS states on the  Nica-Toeplitz algebra. 
We illustrate our main theorem by considering backward shifts on the infinite-path spaces of a class of $k$-graphs whose shift maps $*$-commute.
\end{abstract}

\keywords{Product system; $*$-commuting homeomorphisms; Toeplitz algebra; Nica--Toeplitz algebra, Cuntz--Pimsner algebra;  KMS state.}

\subjclass[2010]{46L35}

\maketitle

\section{Introduction}

Suppose that $\alpha$ is an action of the real line $\R$ by automorphisms of a $C^*$-algebra $A$. Operator-algebraic dynamical systems such as $(A,\R,\alpha)$  provide operator-algebraic  models for physical systems in quantum statistical physics \cite{BRII}. The equilibrium states of the  physical system are then the states  on $A$ that satisfy a  commutation relation called the KMS condition. This relation involves a
parameter $\beta$,  which is a real number interpreted as the inverse temperature of the physical system.
The KMS$_\beta$ condition makes sense for abstract dynamical systems as well as physical ones, and many authors  have studied KMS states in other purely mathematical contexts \cite{EFW,BC,PWY,EL,LN, KW,aHLRS,K, K2, DFK}. The results have often been fascinating.

Suppose that $A$ is a $C^*$-algebra. Fowler \cite{Fo} defined product systems of Hilbert $A$--$A$ bimodules over semigroups $P$. Following the construction for  individual bimodules in \cite{FR}, Fowler associated to each product system $X$ a Toeplitz algebra $\TT(X)$  which is universal for a family of Toeplitz representations of $X$, and a quotient $\OO(X)$ which is universal for a family of Cuntz--Pimsner covariant representations. When $P$ is the positive cone in a quasi-lattice ordered group $(G,P)$ as in \cite{Ni}, Fowler considered also a smaller family  of Nica-covariant Toeplitz representations, and  the Nica-Toeplitz algebra $\NT(X)$ is the quotient of $\TT(X)$ which is universal for Nica-covariant  representations. The algebra $\NT(X)$ is only tractable for a class of \emph{compactly aligned} product systems, and for such systems, $\OO(X)$ is a quotient of $\NT(X)$ (see Lemma~\ref{fixdef} below). 

There are many interesting examples of product systems over the additive semigroup $\N^k$, including ones associated to the $k$-graphs of Kumjian-Pask \cite{KP} (see \cite[page 1492]{FS}). For systems over $\N^k$, the universal properties of $\NT(X)$ and $\OO(X)$ give strongly continuous gauge actions of the torus $\T^k$. Composing with an embedding of $\R$ in $\T^k$ gives actions of $\R$ on $\NT(X)$ and $\OO(X)$, and we are interested in the equilibrium states  of the resulting dynamical systems.

Our approach is informed by our previous work \cite{AaHR}, where we studied the KMS states on the Toeplitz algebra and the Cuntz--Pimsner algebra of a Hilbert bimodule associated to a local homeomorphism on a compact space $Z$. Here we consider a family of $k$ $*$-commuting local homeomorphisms on the same space $Z$, use them to construct a product system $X$ of Hilbert $C(Z)$--$C(Z)$ bimodules over $\N^k$, and study the KMS states of dynamical systems based on $\NT(X)$  as discussed above. Our approach extends the results of \cite{AaHR} to product systems in a way parallel to the extension of results about the algebras of finite graphs in \cite{aHLRS1} to the algebras of $k$-graphs in \cite{aHLRS}.  

For large inverse temperatures we have very complete results:  we find an explicit  isomorphism between the simplex of  KMS$_\beta$ states of $\NT(X)$ and  a concretely-described simplex of measures on $Z$. The surjectivity of this isomorphism requires a rational independency condition on the dynamics which has also appeared in  \cite{HLS, aHLRS}.
At a critical inverse temperature $\beta_c$ determined by the dynamical properties of the local homeomorphisms,  all we can say is that there is at least one KMS$_{\beta_c}$ state of $\NT(X)$. We have been unable to show, even for a ``preferred dynamics'', that there are KMS$_{\beta_c}$ states which factor through a state of $\OO(X)$.

To analyse the KMS structure of $\NT(X)$, we need to be able to recognise when a given state is a KMS state. This usually involves a 
characterising formula on  nice  spanning elements of a dense $*$-subalgebra of analytic elements.    
In the absence of Toeplitz-Cuntz-Krieger type relations as in the Toeplitz algebras of $k$-graphs in \cite{aHLRS} and  with no orthonormal bases for the fibres of our product system  as in \cite{HLS}, such a  characterisation formula seemed elusive. Our innovation  is the discovery that if the local homeomorphisms $*$-commute, then there are suitable Parseval frames for the fibres of the product system which interact in a complex way. Using  these Parseval frames, we prove Toeplitz-Cuntz-Krieger type relations for  $\NT(X)$ and get our formula.

\subsection*{Outline}

We begin with a section on background material. We first discuss some general properties of Hilbert bimodules,  their  representations and some properties of their Parseval frames. Then in \S\ref{prodsys}, we review Fowler's definition of product systems and the family of compactly aligned product systems. In \S\ref{NTalg}, we discuss the Nica-Toeplitz algebra, and establish some basic properties which are hard to point to in the literature. In particular, we show that the Cuntz--Pimsner algebra, which was defined in \cite{Fo} as a quotient of $\TT(X)$, can also be viewed as a quotient of $\NT(X)$. We then have some short notes about various other topics, including a discussion of the $*$-commuting hypothesis.

In \S\ref{sec3}, we start with $k$ commuting  local homeomorphisms on  a compact Hausdorff space $Z$, and construct a compactly aligned product system $X$ over $\N^k$. This is based on constructions of Larsen \cite{Lar} and Brownlowe \cite[Proposition~3.2]{Br}. 

We next choose a vector $r=(r_j)\in (0,\infty)^k$ and construct a dynamics $\alpha^r:\R\to \Aut\NT(X)$ by composing the gauge action of $\T^k$ with the map $t\mapsto \big(e^{itr_j}\big)$ from $\R\to \T^k$. Every KMS functional restricts to a positive functional on the coefficient algebra $C(Z)$, which is implemented by a measure $\mu$ on $Z$. The KMS condition then imposes constraints on the measure $\mu$, which are analogues of the subinvariance relations arising in the analysis of KMS states on $k$-graph algebras \cite[\S4 and Theorem~6.1(a)]{aHLRS}. 
In \S\ref{sec4}, we describe the subinvariance relations arising here and their solutions (Proposition~\ref{series}). To find these solutions we need to restrict $\beta$ to a range $\beta>\beta_c$, and we describe the \emph{critical inverse temperature} $\beta_c$ in  Proposition~\ref{series}. It is a several-variable analogue of the critical inverse temperature in \cite[Proposition~4.2]{AaHR}.

The second step in our analysis is to find a way of recognising KMS$_\beta$ states in terms of their behaviour on a spanning family (Proposition~\ref{KMSPROP}). 
For this calculation, we need to assume that our local homeomorphisms $*$-commute, and we therefore assume this for the rest of the paper. The proof of Proposition~\ref{KMSPROP} is long and involved, and occupies the whole of \S\ref{sec4}.

In \S\ref{sec6} we prove our results about the KMS$_\beta$ states of $(\NT(X),\alpha^r)$ for $\beta$ larger than the critical value $\beta_c$ (Theorem~\ref{theorem1}). 
When we have only one local homeomorphism, our main theorem recovers that of \cite{AaHR}. The proof of Theorem~\ref{theorem1} occupies most of \S\ref{sec6},  but we also discuss  implications for the KMS states on $(\OO(X),\alpha^r)$ in Corollaries~\ref{cor6} and \ref{cor-rescue}.

In \S\ref{sec8}, we discuss the ground states of $(\TT(X),\alpha^r)$: Proposition~\ref{groundprop} says that they are parametrised by the entire simplex of probability measures on $Z$. For this system, every ground state is a KMS$_\infty$ state, and hence there is no further phase transition at $\infty$, as occurs, for example, in \cite{LR}.

Important examples of local homeomorphisms on compact spaces are the shifts on the path spaces for finite directed graphs. In \cite[\S7]{AaHR}, we showed that there are interesting relationships  between the Toeplitz and Cuntz-Krieger algebras of a finite graph $E$,  and the Toeplitz and Cuntz--Pimsner algebras of the associated shifts. In \S\ref{sec9}, we consider shift maps on the path spaces of $k$-graphs. There are now $k$ different shifts to consider, and the factorisation property of the $k$-graph implies that these shifts commute. They do not always $*$-commute, but there is a family of \emph{$1$-coaligned} graphs for which they do, and then we can  apply our results. 
For a finite $1$-coaligned $k$-graph $\Lambda$,  the infinite path space $\Lambda^\infty$ is compact, and we then show that every KMS$_\beta$ state of the Toeplitz algebra of the  $k$-graph is the restriction of a KMS$_\beta$ state of the Nica--Toeplitz algebra of $X(\Lambda^\infty)$.

Toeplitz algebras and Cuntz--Pimsner algebras of product systems over $\N^k$ have previously appeared in work of Solel \cite{S}. To reconcile our work with that of \cite{S}, we show in an appendix that the canonical representations of our product systems in $\NT(X)$ are ``doubly commuting'' in the sense of \cite{S}.

\section{Notation and conventions}

\subsection{Hilbert bimodules}

Let $A$ be a $C^*$-algebra, and let $X$  be a  right Hilbert $A$--$A$ bimodule. This means that $X$ is a right Hilbert $A$-module with a left action of $A$ implemented by a homomorphism $\varphi:A\to \LL(X)$ from $A$ into the $C^*$-algebra $ \LL(X)$ of adjointable operators on $X$. In other words, $X$ is a correspondence over $A$.  We say that $X$ is \textit{essential} if $X=\clsp\{\varphi(a)x:a\in A,x\in X\}.$  If $A$ is unital with identity $1_A$ and $\varphi(1_A)x=x$ for all $x\in X$,  then $X$ is essential.   We write  $_A{A}_A$ for the standard bimodule with the inner product given by 
$\langle a,b\rangle=ab^*$ and the actions given by multiplication in $A$.      
For $x,y\in X$, we write $\Theta_{x,y}$ for the adjointable operator on $X$ given by $\Theta_{x,y}(z)=x\cdot\langle y,z\rangle$.  We call $\KK(X):=\clsp\{\Theta_{x,y}:x,y\in X\}$ the algebra of \emph{compact operators on $X$}. 

A \emph{representation} $(\psi,\pi)$ of $X$ in a $C^*$-algebra $B$ consists of a linear map $\psi:X\to B$ and a homomorphism $\pi:A\to B$ such that 
\begin{align*}
\psi(a\cdot x\cdot b)=\pi(a)\psi(x)\pi(b)\text{ and }\pi(\langle x, y\rangle)=\psi(x)^*\psi(y)
\end{align*}
for all $x,y\in X$ and $a,b\in A$. Such a  representation  induces a  homomorphism \[\psi^{(1)}:\KK(X)\rightarrow B\] such that $\psi^{(1)}(\Theta_{x,y})=\psi_p(x)\psi_p(y)^*$ (see \cite[page 202]{P}).

Following \cite{FL,EV}, a sequence $\{x_i\}_{i=1}^d$ in $X$ is called a \textit{Parseval frame} for $X$ if 
\begin{align}\label{reconstruction}
	\sum_{i=1}^{d}x_i\cdot\langle x_i,x\rangle_A=x\quad  \text{for all $x\in X$}.
	\end{align}
The formula \eqref{reconstruction} is known as \textit{the reconstruction formula}. 
Let $\{x_i\}_{i=1}^d$ be a Parseval  frame for  $X$ and write $1_X$ for the  identity operator on $X$. The reconstruction formula implies that $\sum_{i=1}^d\Theta_{x_i,x_i}=1_X$. In particular  $1_X$ is compact.
 
If  $(\psi,\pi)$ is a  representation  of $X$ on a Hilbert space  $\HH$, then 
  \cite[Proposition~4.1(1)]{Fo}  gives a unique endomorphism $\alpha^{\psi,\pi}$ of $\pi(A)'$ such that 
\[\alpha^{\psi,\pi}(T)\psi(x)=\psi(x)T \text{  for all } T\in \pi(A)',\, x\in X, \text{ and }\]
\[\alpha^{\psi,\pi}(1)r=0 \text{ for  }r\in (\psi(X)\HH)^\bot.\]

The next lemma shows that if there is  a Parseval frame for $X$, then we can formulate the endomorphism $\alpha^{\psi,\pi}$ easily.
\begin{lemma}\label{nicaeasy}
	Let $X$ be a  right Hilbert $A$--$A$ bimodule. Suppose that $\{x_i\}_{i=1}^d$ is a Parseval frame for $X$. Let $(\psi,\pi)$ be a  representation of $X$ on a Hilbert space $\HH$ and let $\alpha^{\psi,\pi}$ be as above. Then
	\begin{align}\label{hilbermodule}
	 \alpha^{\psi,\pi}(1)=\psi^{(1)}(1_X)=\sum_{i=1}^d\psi_p(x_i)\psi_p(x_i)^*.
	\end{align}
\end{lemma}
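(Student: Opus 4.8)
The plan is to treat the two equalities separately, beginning with the easier second one. Since $\{x_i\}_{i=1}^d$ is a finite Parseval frame, the excerpt already records that $1_X=\sum_{i=1}^d\Theta_{x_i,x_i}$ lies in $\KK(X)$, so $\psi^{(1)}(1_X)$ makes sense. I would then apply the homomorphism $\psi^{(1)}$ to this finite sum and use its linearity together with the defining formula $\psi^{(1)}(\Theta_{x,y})=\psi(x)\psi(y)^*$ to obtain
\[
\psi^{(1)}(1_X)=\sum_{i=1}^d\psi^{(1)}(\Theta_{x_i,x_i})=\sum_{i=1}^d\psi(x_i)\psi(x_i)^*.
\]
No convergence issue arises, since the frame is finite.

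For the first equality I would show that $Q:=\psi^{(1)}(1_X)$ satisfies the two properties characterising $\alpha^{\psi,\pi}(1)$ in \cite[Proposition~4.1(1)]{Fo} and then invoke uniqueness. Taking $T=1$ in the displayed defining relations, $\alpha^{\psi,\pi}(1)$ is pinned down as the operator with $\alpha^{\psi,\pi}(1)\psi(x)=\psi(x)$ for all $x\in X$ and $\alpha^{\psi,\pi}(1)r=0$ for $r\in(\psi(X)\HH)^\bot$; these two conditions force it to be the orthogonal projection onto $\clsp(\psi(X)\HH)$. To verify the first condition for $Q$, I would first establish the general identity $\psi^{(1)}(K)\psi(x)=\psi(Kx)$ for $K\in\KK(X)$: on a rank-one operator, $\psi^{(1)}(\Theta_{y,z})\psi(x)=\psi(y)\psi(z)^*\psi(x)=\psi(y)\pi(\langle z,x\rangle)=\psi(y\cdot\langle z,x\rangle)=\psi(\Theta_{y,z}x)$, and this extends by linearity and continuity. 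Putting $K=1_X$ gives $Q\psi(x)=\psi(1_X x)=\psi(x)$, so $Q$ is the identity on $\psi(X)\HH$ and hence on its closure. For the second condition I would use the explicit form $Q=\sum_i\psi(x_i)\psi(x_i)^*$: if $r\perp\psi(X)\HH$ then $\psi(x_i)^*r=0$ for each $i$ (because $\langle\psi(x_i)^*r,\xi\rangle=\langle r,\psi(x_i)\xi\rangle=0$ for all $\xi\in\HH$), whence $Qr=0$.

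These two properties show that $Q$ is exactly the orthogonal projection onto $\clsp(\psi(X)\HH)$, which is $\alpha^{\psi,\pi}(1)$; therefore $\alpha^{\psi,\pi}(1)=Q=\psi^{(1)}(1_X)$, completing the proof. I do not expect a genuine obstacle: the argument is essentially bookkeeping, and the only points requiring mild care are confirming that $1_X\in\KK(X)$ so that $\psi^{(1)}(1_X)$ is defined (already noted in the excerpt) and correctly reading off the $T=1$ instances of the defining relations for $\alpha^{\psi,\pi}$.
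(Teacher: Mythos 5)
Your proof is correct and follows essentially the same route as the paper: both get the second equality by applying $\psi^{(1)}$ to $1_X=\sum_{i=1}^d\Theta_{x_i,x_i}$, and both obtain the first equality by verifying the two defining conditions of $\alpha^{\psi,\pi}(1)$ (acting as the identity on $\psi(X)\HH$ and as zero on $(\psi(X)\HH)^\bot$) and then invoking uniqueness. The only cosmetic difference is that you verify the first condition through the general identity $\psi^{(1)}(K)\psi(x)=\psi(Kx)$ for $K\in\KK(X)$, while the paper carries out the equivalent computation directly with the frame via repeated applications of the reconstruction formula.
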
	
\begin{proof}
The second equality follows from the reconstruction formula and the formula for  $\psi^{(1)}$. To see the first equality,   
	by uniqueness  of $\alpha^{\psi,\pi}$,  it suffices to prove that
	\begin{itemize}
		\item[(a)]  $\psi^{(1)}(1_X)\psi(x)=\psi(x)$  for all $x\in X$, and
		\item[(b)]  $\psi^{(1)}(1_X)r=0$ for all  $r\in (\psi(X)\HH)^\bot$.
	\end{itemize}	
		
	To see (a), let $x\in X$.   The reconstruction formula for $x$ implies that 
	\[\psi(x)=\psi\Big(\sum_{j=1}^d x_j\cdot \langle x_j,x\rangle\Big)
	=\sum_{j=1}^d\psi(x_j)\pi (\langle x_j,x\rangle).\]
	Using this and  the second equality in \eqref{hilbermodule}, we have
	\begin{align*}
	\psi^{(1)}(1_X)\psi(x)&=\sum_{i,j=1}^d\psi(x_i)\psi(x_i)^*\psi(x_j)\pi (\langle x_j,x\rangle)
	=\sum_{i,j=1}^d\psi(x_i)\pi (\langle x_i,x_j\rangle)\pi (\langle x_j,x\rangle)\\
	&=\sum_{i,j=1}^d\psi(x_i\cdot\langle x_i,x_j\rangle)\pi (\langle x_j,x\rangle).
	\end{align*} 
	Rearranging this and two applications of the reconstruction formula give
	\begin{align*}
	\psi^{(1)}(1_X)\psi(x)&=\sum_{j=1}^d\psi\Big(\sum_{i=1}^d x_i\cdot\langle x_i,x_j\rangle\Big)\pi (\langle x_j,x\rangle)
=\sum_{j=1}^d\psi(x_j)\pi (\langle x_j,x\rangle)\\
	&=\sum_{j=1}^d\psi(x_j\cdot \langle x_j,x\rangle)=\psi(x).
	\end{align*}
This is precisely (a).
	
	For (b), fix $r\in (\psi(X)\HH)^\bot$.    Notice that for all $r'\in \HH$ we have
	\begin{align*}
	\Big(\Big(\sum_{i=1}^d\psi_p(x_i)\psi_p(x_i)^*\Big)r\,\Big|\, r'\Big)=\sum_{i=1}^d\big(r\,\big|\, \psi_p(x_i)\psi_p(x_i)^*r'\big)=0 \text{ for } r'\in \HH.
	\end{align*}
	It follows that $\psi^{(1)}(1_X)r=0$ and we have proved (b).
\end{proof}

Given two right  Hilbert $A$--$A$ bimodules   $X$ and $Y$,  we can form a balanced tensor product of $X$ and $Y$ as follows:
 Let  $X\odot Y$ be the algebraic  tensor product of $X$ and $Y$ and suppose that  
 $X\odot_A Y$   is  the quotient of   $X\odot Y$ by the subspace
\begin{align}\label{notapN}
N:=\lspan\{(x\cdot a)\odot y-x\odot(a\cdot y): x\in X, y\in Y, a\in A\}.
\end{align}
There is a well-defined right action of $A$ on $X\odot_A Y$ such that 
$(x\odot_A y)\cdot a=x\odot_A y\cdot a$  for $x\odot_A y\in X\odot_A Y, a\in A$. By \cite[Proposition~4.5]{La},
we can equip  $X\odot_A Y$  with a right $A$-valued inner product characterised by 
	\begin{align}\label{balinnerproduct}
	\big\langle  x\odot_A y, z\odot_A w  \big\rangle =\big\langle y, \varphi_Y\big(\langle x,z\rangle\big)w\big\rangle\quad \text{ for }x\odot_A y, z\odot_A w\in X\odot_A Y.
	\end{align}
Let $X\otimes_A Y$ be the completion of $X\odot_A Y$ with respect to this inner product \eqref{balinnerproduct}.
Lemma~2.16 of \cite{tfb} implies  that  \eqref{balinnerproduct} extends to a right $A$-valued inner product on $X\otimes_A Y$ as well. Thus $X\otimes_A Y$ is a right Hilbert $A$-module.

For every  $S \in \LL(X)$, \cite[Lemma~I.3]{W} gives  a unique operator $S\otimes 1_Y\in \LL(X\otimes_A Y)$ such that $S\otimes 1_Y(x\otimes y)=S(x)\otimes y$ for $x\otimes y\in X\otimes_A Y$.
Thus the map  $a\mapsto \varphi_X(a)\otimes 1_Y$ gives a left action of $A$ by adjointable operators on $X\otimes_A Y$. Thus   $X\otimes_A Y$ is  a right Hilbert $A$--$A$ bimodule which we call the  \textit{balanced tensor product} of $X$ and $Y$.
Throughout, we use $x\odot y$ for elements of $X\odot Y$ and we  write $x\otimes y$ for elements of both $X\odot_A Y$ and $X\otimes_A Y$.

\subsection{Product systems of Hilbert bimodules}\label{prodsys}
We use  the conventions of \cite{Fo} for the basics of  product systems of Hilbert bimodules. For  convenience, we use  the following equivalent formulation from 
(\cite[page 6]{SY}).

Suppose  that $P$ is a multiplicative semigroup with identity $e$, and let $A$ be a $C^*$-algebra. For each $p\in P$ let  $X_p$  be a right Hilbert $A$--$A$ bimodule and suppose that $\varphi_p:A\rightarrow \LL(X_p)$ is the homomorphism which defines the left action of $A$ on $X_p$.  A \textit{product system  of right Hilbert $A$--$A$ bimodules over $P$} (or  a \textit{product system  with fibres $X_p$ over $P$}) is  the disjoint union  $X:=\bigsqcup_{p\in P}X_p$
	such that:
	\begin{itemize}
		\item[(P1)] The identity fibre $X_e$  is  the standard bimodule $_A{A}_A$.
		\item[(P2)] $X$ is a semigroup  and  for each $p,q \in P\setminus\{e\}$  the map $(x,y)\mapsto xy:X_p\times X_q \rightarrow X_{pq}$, extends to  an isomorphism   $\sigma_{p,q}: X_p\otimes_A X_q\rightarrow X_{pq}$.
		\item[(P3)] The multiplications   $X_e\times X_p\rightarrow X_p$ and $X_p\times X_e\rightarrow X_p$ satisfy
		\[ax=\varphi_p(a)z,\quad xa=x\cdot a\, \text{ for }\, a\in X_e \text { and } x\in X_p.\]
	\end{itemize}
	If each fibre $X_p$ is essential, then we call $X$ a \textit{product system  of essential right Hilbert $A$--$A$ bimodules over $P$}. We write  $1_p$ for the identity operator on the fibre  $X_p$.
	
	The associativity of the multiplication in $X$ implies that for $p,q,r\in P, x\in X_p, y\in X_q, s\in X_r$, we have
\begin{align*}
\sigma_{pq,r}\big(\sigma_{p,q}(x\otimes y)\otimes s\big)=\sigma_{p,qr}\big(x\otimes \sigma_{q,r}(y\otimes s)\big).
\end{align*}
	
Let $p,q \in P\setminus\{e\}$ and $S\in \LL(X_p)$. The isomorphism $\sigma_{p,q}: X_p\otimes_A X_q\rightarrow X_{pq}$  gives    a homomorphism $\iota^{pq}_p:\LL(X_p)\rightarrow \LL(X_{pq})$
defined by
\[\iota^{pq}_p(S)=\sigma_{p,q}\circ (S\otimes 1_{X_q})\circ \sigma_{p,q}^{-1}.\]

Suppose that  $P$ is a subsemigroup of a group $G$ such that $P\cap P^{-1}=\{e\}$. Then 
 \[
 p\leq q  \Leftrightarrow p^{-1}q\in P
 \] 
 defines a partial order on $G$.
Following  \cite[Definition~2.1]{Ni} and \cite[Definition~6]{CL}, we say $(G,P)$ is a  \textit{quasi-lattice ordered group} if  every pair $p,q\in G$ with a common upper bound in $P$ has a least upper bound in $P$, which we denote by $p\vee q$. We write $p\vee q=\infty$ when  $p,q\in G$ have no common upper bound. A well-known example of a quasi-lattice ordered group is $(\Z^k,\N^k)$: for all $m,n \in \N^k$, there is a least upper bound $m\vee n $ with $i$-th coordinate $(m\vee n)_i:=\max\{m_i, n_i\}$.

Let $(G,P)$ be a quasi-lattice ordered group. A product system  of right Hilbert $A$--$A$ bimodules  over $P$ is \textit{compactly aligned}, if for all $p,q\in P$ with $p\vee q<\infty$,   $S\in \KK(X_p)$ and $T\in\KK(X_q)$, we have $\iota^{p\vee q}_p(S)\iota^{p\vee q}_q(T)\in \KK(X_{p\vee q})$.

\subsection{$C^*$-algebras associated to product systems of Hilbert bimodules}\label{NTalg}

	Let $P$ be a semigroup with identity $e$, and let $X$ be a product system  of right Hilbert $A$--$A$ bimodules over $P$.  Let $\psi$   be a function from	$X$ to a $C^*$-algebra $B$. Write $\psi_p$   for the restriction of $\psi$ to $X_p$.  We call  $\psi$
	a  \textit{Toeplitz representation}  of  $X$ if:
	\begin{itemize}
		\item[(T1)]  For each $p\in  P\setminus\{e\}$, $\psi_p:X_p\rightarrow B$ is linear, and $\psi_e:A\rightarrow B$ is a homomorphism,
		\item[(T2)]  $\psi_p(x)^*\psi_p(y)=\psi_e(\langle x,y\rangle)$ for  $p\in P$, and $x,y\in X_p$,
		\item[(T3)] $\psi_{pq}(xy)=\psi_p(x)\psi_q(y)$ for  $p,q\in P$, $x\in X_p$,  and $y\in X_q$.
	\end{itemize}
	Conditions (T1) and (T2) imply that $(\psi_p,\psi_e)$ is a  representation of the fibre $X_p$. Then there is a homomorphism   $\psi^{(p)}:\KK(X_p)\rightarrow B$ such that $\psi^{(p)}(\Theta_{x,y})=\psi_p(x)\psi_p(y)^*$.

The \emph{Toeplitz algebra} $\TT(X)$ is generated by a universal Toeplitz representation of $X$, say $\omega$.  Proposition~2.8 of \cite{Fo} says that there is such an algebra $\TT(X)$. If $T$ is a Toeplitz  representation  of $X$ in a $C^*$-algebra
$B$, then we write $T_*$  for the representation
of $\TT(X)$ in $B$ such that $T_*\circ \omega= T$.

A Toeplitz representation $\psi$ of $X$ is \textit{Cuntz--Pimsner covariant} if
	\begin{align}\label{cunzcon}	
	\psi_e(a)=\psi^{(p)}(\varphi_p(a))\quad\text{for all } p\in P, a\in \varphi_p^{-1}(\KK(X_p)).
	\end{align}
The \textit{Cuntz--Pimsner algebra} $\OO(X)$ is the quotient of $\TT(X)$ by the ideal
\begin{align}\label{quotient2}
\big\langle\omega(a)-\omega^{(p)}(\varphi_p(a)):p\in P, a\in \varphi_p^{-1}(\KK(X_p))\big\rangle.
\end{align}

Let $(G,P)$ be a quasi-lattice ordered group and suppose that $X$ is a product system of essential  right Hilbert $A$--$A$ bimodules over $P$.  Suppose that $\psi$ is  a Toeplitz representation  of $X$ on a Hilbert space  $\HH$. For each $p\in P$, let $\alpha^\psi_p$ be the map $\alpha^{\psi_p,\psi_0}$ associated to the representation $(\psi_p,\psi_0)$ of the fibre $X_p$ as in Lemma~\ref{nicaeasy}.
  A Toeplitz representation $\psi$ of $X$ on a Hilbert space  $\HH$ is \textit{Nica covariant} if  for every $p,q\in P$, we have
	\begin{align*}
	\alpha_p^\psi(1)\alpha_q^\psi(1)=\begin{cases}
	\alpha_{p\vee q}^\psi(1)&\text{if $p\vee q<\infty$}\\
	0&\text{otherwise.}\\
	\end{cases}
	\end{align*}
It follows from  Lemma~\ref{nicaeasy} that if each fibre in $X$ has a Parseval frame, then  a Toeplitz representation $\psi$ is   Nica covariant if and only if  for every $p,q\in P$,  we have
	\begin{align}\label{nicacov-using-lemma}
	\psi^{(p)}(1_p)\psi^{(q)}(1_q)=\begin{cases}
	\psi^{(p\vee q)}(1_{p\vee q})&\text{if $p\vee q<\infty$}\\
	0&\text{otherwise.}\\
	\end{cases}
	\end{align}
	
  Fowler showed in \cite[Proposition~5.6]{Fo} that we can extend the notion of  Nica covariance to representations in  $C^*$-algebras.
 A Toeplitz representation $\psi$ of $X$ in a $C^*$-algebra $B$ is \textit{Nica covariant} if and only for every $p,q\in P$, $S\in \KK(X_p)$,  and $T\in \KK(X_q)$, we have
	\begin{align*}
	\psi^{(p)}(S)\psi^{(q)}(T)=\begin{cases}
	\psi^{(p\vee q)}\big(\iota^{p\vee q}_p(S)\iota^{p\vee q}_q(T)\big)&\text{if $p\vee q<\infty$}\\
	0&\text{otherwise.}\\
	\end{cases}
	\end{align*}
The \textit{Nica-Toeplitz algebra } $\NT(X)$\footnote{In Fowler's paper   the  Nica-Toeplitz algebra is denoted by $\TT_{\cov}(X)$.}
is the   $C^*$-algebra  generated by   a universal  Nica covariant representation  of $X$, which in this paper we denote by $\psi$. 
 It follows from \cite[Theorem~6.3]{Fo} that
\begin{align*}
\NT(X)=\clsp\{\psi_p(x)\psi_q(y)^*:p,q\in P,x\in X_p,y\in X_q\}.
\end{align*}

The Cuntz--Pimsner algebra  $\OO(X)$ is by definition   a quotient of $\TT(X)$.  There is another  Cuntz--Pimsner algebra $\NO(X)$ in \cite{SY}, which is directly defined as a quotient of $\NT(X)$.  In the product systems considered  here,  the left action of $A$ on each fibre is by compact operators,  and hence   Cuntz--Pimsner covariance  implies  Nica covariance \cite[Proposition~5.4]{Fo}.  Also since the left action is injective, by \cite[Proposition~5.1]{SY}, the Cuntz--Pimsner covariance considered in \cite{SY} is equivalent to Fowler's Cuntz--Pimsner covariance given at  \eqref{cunzcon}. Therefore by  \cite[Remark~3.14]{SY} the two Cuntz--Pimsner  algebras coincide.  But we found it easier to work with $\OO(X)$. The next lemma shows that we can still express $\OO(X)$ as a quotient of $\NT(X)$. 

\begin{lemma}\label{fixdef}
	Let $(G, P)$ be a
	quasi-lattice ordered group, and let $X$ be a compactly aligned product system 
	of right Hilbert $A$--$A$ bimodules over $P$. Suppose that every Cuntz--Pimsner-covariant representation of $X$ is a
	Nica-covariant representation. Then $\OO(X)$ is the quotient of $\NT(X)$ by the ideal 
	\begin{align}\label{quotient0}
	\big\langle\psi_e(a)-\psi^{(p)}(\varphi_p(a)):p\in P, a\in \varphi_p^{-1}(\KK(X_p))\big\rangle.
	\end{align}
\end{lemma}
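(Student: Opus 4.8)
The plan is to prove the lemma by comparing universal properties, the key point being that $\OO(X)$, defined through \eqref{quotient2}, is exactly the $C^*$-algebra that is universal for Cuntz--Pimsner covariant Toeplitz representations of $X$. Throughout, write $\I$ for the ideal \eqref{quotient2} of $\TT(X)$, so that $\OO(X)=\TT(X)/\I$, and write $\J$ for the ideal \eqref{quotient0} of $\NT(X)$; the goal is to identify $\NT(X)/\J$ with $\OO(X)$.

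First I would record the universal property of $\OO(X)$. Let $q_\OO\colon\TT(X)\to\OO(X)$ be the quotient map and set $j:=q_\OO\circ\omega$. Using the universal property of $\TT(X)$, a Toeplitz representation $\rho$ of $X$ in a $C^*$-algebra $B$ factors as $\rho=\overline\rho\circ j$ for a (necessarily unique) homomorphism $\overline\rho\colon\OO(X)\to B$ if and only if the induced homomorphism $\rho_*\colon\TT(X)\to B$ annihilates $\I$; since $\rho_*$ is a homomorphism, it annihilates $\I$ exactly when it kills the generators of \eqref{quotient2}, that is, exactly when $\rho_e(a)=\rho^{(p)}(\varphi_p(a))$ for all $p\in P$ and $a\in\varphi_p^{-1}(\KK(X_p))$, which is Cuntz--Pimsner covariance of $\rho$. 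In particular $j$ itself is Cuntz--Pimsner covariant.

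Next I would run the same analysis for $\NT(X)/\J$. Let $q'\colon\NT(X)\to\NT(X)/\J$ and $k:=q'\circ\psi$. Using the universal property of $\NT(X)$ for Nica-covariant representations, a Nica-covariant representation $\rho$ factors as $\rho=\overline\rho\circ k$ through $\NT(X)/\J$ if and only if the induced homomorphism $\rho_*\colon\NT(X)\to B$ kills the generators of \eqref{quotient0}. Here one checks that $\rho_*\circ\psi^{(p)}=\rho^{(p)}$ on $\KK(X_p)$: this holds on the rank-one operators $\Theta_{x,y}$ because $\rho_*(\psi_p(x)\psi_p(y)^*)=\rho_p(x)\rho_p(y)^*$, and hence on all of $\KK(X_p)$ by linearity and continuity. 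Thus the condition is again precisely Cuntz--Pimsner covariance, so $\NT(X)/\J$ is universal for representations that are simultaneously Nica covariant and Cuntz--Pimsner covariant, and $k$ is such a representation.

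Finally I would invoke the hypothesis, which says every Cuntz--Pimsner covariant representation is Nica covariant; consequently the classes ``Cuntz--Pimsner covariant'' and ``Nica covariant and Cuntz--Pimsner covariant'' coincide, and $\OO(X)$ and $\NT(X)/\J$ carry the same universal property. To produce the isomorphism concretely: $k$ is Cuntz--Pimsner covariant, so the universal property of $\OO(X)$ gives $\Phi\colon\OO(X)\to\NT(X)/\J$ with $\Phi\circ j=k$; and $j$ is Cuntz--Pimsner covariant, hence Nica covariant by hypothesis, so the universal property of $\NT(X)/\J$ gives $\Psi\colon\NT(X)/\J\to\OO(X)$ with $\Psi\circ k=j$. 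As $j$ and $k$ generate $\OO(X)$ and $\NT(X)/\J$ respectively, the compositions $\Psi\circ\Phi$ and $\Phi\circ\Psi$ fix the generators and hence are the identity maps, so $\Phi$ is the required isomorphism. The argument is essentially formal; the only points of care are the bookkeeping in the middle step (the identity $\rho_*\circ\psi^{(p)}=\rho^{(p)}$), and recognising that the hypothesis is exactly what merges the two universal properties. Equivalently, one can argue entirely inside $\TT(X)$: the hypothesis forces $j$ to be Nica covariant, so $q_\OO$ factors through $\psi_*\colon\TT(X)\to\NT(X)$ and $\ker\psi_*\subseteq\I$; since $\J$ pulls back along the surjection $\psi_*$ to the ideal generated by $\ker\psi_*$ together with the generators of \eqref{quotient2}, that preimage equals $\I$, whence $\NT(X)/\J\cong\TT(X)/\I=\OO(X)$.
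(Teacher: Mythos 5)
Your proposal is correct, and its main line of argument takes a genuinely different (more categorical) route than the paper. The paper works entirely inside $\TT(X)$: it realises $\NT(X)=\TT(X)/\J'$ and $\OO(X)=\TT(X)/\I'$, where $\I'$ and $\J'$ are the intersections of the kernels of all Cuntz--Pimsner-covariant (respectively Nica-covariant) representations, uses the hypothesis to get $\J'\subseteq\I'$, and then applies the third isomorphism theorem, identifying the resulting kernel $\I'/\J'$ with the ideal \eqref{quotient0} by pushing the generators of \eqref{quotient2} through the quotient map. You instead characterise both $\OO(X)$ and $\NT(X)/\J$ by their universal properties --- the former for Cuntz--Pimsner-covariant representations, the latter for representations that are simultaneously Nica-covariant and Cuntz--Pimsner-covariant --- observe that the hypothesis makes these two classes coincide, and then build mutually inverse homomorphisms $\Phi$ and $\Psi$ which are the identity on generators. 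Your verification that $\rho_*\circ\psi^{(p)}=\rho^{(p)}$ on rank-one operators (hence on all of $\KK(X_p)$) is exactly the bookkeeping needed to translate ``kills the generators of \eqref{quotient0}'' into Cuntz--Pimsner covariance, and the same check is tacitly needed in the paper's computation of $\I'/\J'$, so nothing is missing. What each approach buys: yours makes the canonical nature of the isomorphism (compatibility with the representations $j$ and $k$) explicit and avoids any discussion of which ideal is the kernel, while the paper's argument is more concrete, directly exhibiting $\OO(X)$ as $\NT(X)$ modulo the specific ideal \eqref{quotient0}, which is the form used later (e.g.\ in Corollary~\ref{cor6} and Theorem~\ref{thm3}). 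Your closing ``equivalently'' paragraph --- pulling $\J$ back along the surjection $\TT(X)\to\NT(X)$ and invoking the ideal correspondence --- is essentially the paper's proof in different clothing, so the two routes reconnect there.
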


\begin{proof}
Let $\I:=\bigcap\big\{\ker \pi_*:\pi \text{ is a Cuntz--Pimsner-covariant representation of $X$}\big\}$  and let 
$\J:=\bigcap\{\ker \theta_*:\theta \text{ is a Nica-covariant representation of $X$}\}$.  Let $q_{\NT}$ be the quotient map $\TT(X)\rightarrow \TT(X)/\J$. A standard argument shows that $(\TT(X)/\J, q_{\NT}\circ \omega)$ is universal for  Nica-covariant representations, and hence it is canonically isomorphic to $(\NT(X),\psi)$. Similarly,  $\OO(X)$ is isomorphic to $\TT(X)/\I$.  In particular, $\I$ is the   ideal of \eqref{quotient2}.
Since  every Cuntz--Pimsner-covariant representation of $X$ is also a Nica-covariant representation, $\J\subseteq \I$. An application of the third isomorphism theorem in algebra gives a quotient map $q:\NT(X)\rightarrow \OO(X)$ such that $\ker q=\I/\J$. We have 
\begin{align}\label{q0}
	\I/\J=\big\{i+\J: i\in \I\big\}= \big\{q_{\NT}(i): i\in \I\big\}.
\end{align}
Now plugging the generating elements $\omega(a)-\omega^{(p)}(\varphi_p(a))$  of $\I$ into \eqref{q0} and 
using $q_{\NT}\circ \omega=\psi$, we obtain
	$\I/\J=\big\{\psi_e(a)-\psi^{(p)}(\varphi_p(a)):p\in P, a\in \varphi_p^{-1}(\KK(X_p))\big\}$.
Thus $\OO(X)$ is the quotient of $\NT(X)$ by the ideal \eqref{quotient0}
\end{proof}	

Let $X$ be a product system  of right Hilbert $A$--$A$ bimodules over $\N^k$.
A standard argument using the universal property shows that there is a strongly continuous \textit{gauge action} $\gamma:\T^k\rightarrow\NT (X)$ such that for each $x\in X_m$ we have  $\gamma_z(x)=z^{m}x$ (in multi-index
notation,   $z^m = \prod_{i=1}^kz_i^{m_i}$ for $z = (z_1,\dots, z_k) \in\T^k$ and $m\in \Z^k$).
Since $\gamma$ fixes the elements of the set in \eqref{quotient0}, it induces  a natural gauge action of $\T^k$ on $\OO(X)$. Now we can lift these actions to actions of the real line $\R$\footnote{To get the action of $\R$ on $\NT(X)$, we could also apply the argument of the paragraph before \cite [Lemma~3.2]{HLS} to the homomorphism 
$N:\Z^k\rightarrow (0,\infty)$ defined by 
$N(n) =\sum_{i=1}^kn_i r_i$}.

\subsection{The Fock representation}

Let $P$ be a semigroup with identity $e$ and suppose that   $X$ is  a product system  of right  Hilbert $A$--$A$ bimodules over  $P$. 
Define $r:X\rightarrow P$  by $r(x):=p$ for $x\in X_p$.
Following \cite{Fo}, we write $F(X)=\bigoplus_{p\in P}X_p$ and call it the \textit{Fock module}. 
Fowler shows in  \cite[page 340]{Fo} that  for $x\in X$ there is an adjointable operator  $T(x)$ on $F(X)$
determined by
$T(x)(\bigoplus x_p)=\bigoplus (xx_p)$.
The adjoint ${T(x)}^*$ is zero on any summand $X_p$ for which  $p\notin r(x)P$. If $p\in r(x)P$,  say $p=r(x)q$ for some $q\in P$, then there is an isomorphism  $\sigma_{r(x),q}: X_{r(x)}\otimes_A X_{q}\rightarrow X_p$, and the adjoint  ${T(x)}^*$  is  given by 
\begin{align}\label{fockadjoint}
{T(x)}^*\big(\sigma_{r(x),q}(y\otimes z)\big)=\langle x,y\rangle \cdot z.
\end{align}
Furthermore,  $T$ is a Toeplitz representation of $X$   called   the  \textit{Fock representation}.

Let $X$ be a  compactly aligned  product system  of right  Hilbert $A$--$A$ bimodules over  $\N^k$ and suppose that  the left action of $A$ on each fibre is by compact operators. Then  the  homomorphism $T_*:\NT(X)\rightarrow \LL(F(X))$ induced from the Fock representation  is faithful (see \cite[Remark 4.8]{HLS}).

\subsection{KMS states} 
Let $(A,\R,\alpha)$ be a $C^*$-algebraic dynamical system. An element $a\in A$ is \emph{analytic} if $t\mapsto \alpha_t(a)$  is the restriction of an entire function $z\mapsto\alpha_z(a)$ on $\C$.
Following recent conventions \cite{LR, aHLRS1,AaHR,aHLRS},   
we say a state $\phi$ of $(A,\R,\alpha)$ is a \emph{KMS state with inverse temperature $\beta$} (or a KMS$_\beta$ state) if
  $\phi(ab)=\phi(b\alpha_{i\beta}(a))$ for all analytic elements $a,b$. 
  A state $\phi$ is a \textit{KMS$_\infty$ state} if it is  the weak$^*$ limit of a sequence  of KMS$_{\beta_i}$ states as 
	$\beta_i\rightarrow \infty$ (see \cite{CM}).  We distinguish between the ground states and the KMS$_\infty$ states.  But in older literature (for example in  \cite{BRII, Pe}), there was no such a distinction.  For us, ground states are those for which $z\mapsto \phi(a\alpha_z(b))$ is bounded in the upper-half plane  for all analytic elements $a,b$.
The argument of \cite[page 19]{LR} shows that  it  is enough  to check both the ground state condition and the KMS condition  on a set of analytic elements which span  a dense subalgebra of   $A$.

\subsection{Topological graphs} A \emph{topological graph} $E=(E^0,E^1,r,s)$ consists of locally compact Hausdorff spaces $E^0$ and $E^1$, a continuous map $r:E^1\to E^0$ and a local homeomorphism $s:E^1\to E^0$.    Here $E^0$ and $E^1$ will always be compact. We use the convention of  \cite{Ra} for  paths in $E$: for example,  if $s(e)=r(f)$, then we think of $ef$  as a path of length $2$.  Since $E^0$ and $E^1$ are compact, the associated \textit{graph correspondence} $X(E)=C(E^1)$
 is the right-Hilbert $C(E^0)$--$C(E^0)$ bimodule  with module actions and inner product given by $(a\cdot x\cdot b)(z)=a(r(z)) x(z) b(s(z))$ and   $\langle x,y\rangle(z)=\sum_{s(w)=z} \overline{x(w)}y(w)$ for $a,b\in C(E^0)$, $x,y\in X(E)$ and $z\in E^0$.

\subsection{$*$-commuting local homeomorphisms}
Let $f,g$ be commuting maps on a set $Z$. Following  \cite{AR} and \cite[\S 10]{ER} we say
	$f,g$  \textit{$*$-commute}, if for every $x,y\in Z$ satisfying $f(x)=g(y)$, there exists a unique $z\in Z$ such that $x=g(z)$ and $y=f(z)$.
	The following  diagram illustrates this property:
	
	\begin{tikzpicture}[scale=0.2]
	\node  at (0,0) {};
	\node (a) at (35,20) {$z$};
	\node (b) at (25,10) {$y$};
	\node (c) at (45,10) {$x$};
	\node (d) at (35,0) {$f(x)=g(y)$};
	\draw[-latex, black](a) to node[pos=0.4,left] {$f$} (b);
	\draw[-latex, black](c) to node[pos=0.4,left] {$f$} (d);
	\draw[-latex, black ](b) to node[pos=0.4,right] {$g$} (d);
	\draw[-latex, black](a) to node[pos=0.4,right] {$g$} (c);
	\end{tikzpicture}
	
	We also say that a family of maps  $*$-commute  if every two of them $*$-commute. Given  $*$-commuting maps  $f,g, h$  on a space $Z$,  Proposition~10.2 of \cite{ER} implies  that
	$f^i$ and $g^j$  $*$-commute for $i,j\in \N$. 
	Lemma~1.3 of \cite{St}  shows that $f$ and $g\circ h$  $*$-commute.
\begin{remark}
	Let $h_1,\dots,h_k$ be $*$-commuting local homeomorphisms on a space $Z$ and take  $m,n\in \N^k$.  Throughout  we write $m\wedge n$  for the element of $\N^k$ with  $(m\wedge n)_i:=\min\{m_i,n_i\}$. We also define
	\[
	h^m:=h_1^{m_1}\circ\cdots\circ h_k^{m_k}.
	\]
Observe that if  $m\wedge n=0$, then  the  local homeomorphisms appearing in $h_1^{m_1}\circ\cdots\circ h_k^{m_k}$  do not appear in $h_1^{n_1}\circ\cdots\circ h_k^{n_k}$. Then  the argument of the  previous paragraph shows that $h^m$ and $h^n$ $*$-commute.
The  condition $m\wedge n=0$ here  is very   crucial.  In particular,  $*$-commuting local homeomorphisms need not  $*$-commute with themselves. Thus   $h^m$ and $h^n$ may not  $*$-commute in general.
\end{remark}

\subsection{Measures}
All the measures we consider here  are positive in the sense that they take values in $[0,\infty)$.  We write $M(Z)_{+}$ for the set of finite  Borel measures on $Z$. Some of the measures used here are defined by linear functionals on $C(Z)$. Then by the Riesz representation theorem (see \cite[Corollary~7.6]{F} for example) these measures are automatically regular. A \emph{probability measure} is a Borel measure with total mass $1$.

\section{A product system associated to a family of local homeomorphisms}\label{sec3}

In \cite[Lemma~5.2]{AaHR} we proved that for a local homeomorphism $f$ and the associated graph correspondence $X(E)$, there is an isomorphism from $X(E)\otimes_A X(E)$ onto the graph correspondence associated to $f\circ f$. The next lemma generalises this to  graph correspondences of two different local homeomorphisms. There is also a similar result in the dynamics arising from graph algebras (see \cite[Proposition~3.2]{Br}).
\begin{lemma}\label{coresiso}
	Let $f,g$ be surjective local homeomorphisms  on a compact Hausdorff space $Z$. Let  $A:=C(Z)$  and suppose that  $X(E_1)$,   $X(E_2)$ and $X(F)$ are the  graph correspondences related to the topological graphs $E_1=(Z,Z,\id,f)$,  $E_2=(Z,Z,\id,g)$, and   $F=(Z,Z,\id, g\circ f)$. Then there is an isomorphism $\sigma_{f,g}$ from  $X(E_1)\otimes_A X(E_2)$ onto $X(F)$ such that
	\begin{align}\label{isomor51}
	\sigma_{f,g}(x\otimes y)(z)=x(z) y(f(z)) \text{ for all $z\in Z$}.
	\end{align}
\end{lemma}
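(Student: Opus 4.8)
The plan is to define the map $\sigma_{f,g}$ by the formula \eqref{isomor51} on elementary tensors, verify that it respects the balancing relation so that it descends to $X(E_1)\otimes_A X(E_2)$, check that it is inner-product preserving (hence isometric), and finally show it is surjective onto $X(F)=C(Z)$. Since $\sigma_{f,g}$ will be an isometric surjection between Hilbert modules that also intertwines the module actions, it is automatically a bimodule isomorphism.

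First I would treat $\sigma_{f,g}$ as a map on the algebraic tensor product $X(E_1)\odot X(E_2)$. To see it factors through $X(E_1)\odot_A X(E_2)$, I would check that it kills the balancing subspace $N$ from \eqref{notapN}: for $a\in A=C(Z)$, the element $(x\cdot a)\odot y-x\odot(a\cdot y)$ maps to the function $z\mapsto x(z)a(f(z))y(f(z)) - x(z)a(f(z))y(f(z))$, using $(x\cdot a)(z)=x(z)a(s(z))=x(z)a(f(z))$ and $(a\cdot y)(w)=a(r(w))y(w)=a(w)y(w)$ evaluated at $w=f(z)$; these two terms coincide, so the difference vanishes. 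Thus $\sigma_{f,g}$ is well defined on $X(E_1)\odot_A X(E_2)$.

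The main computation, and the step I expect to be the central obstacle, is verifying that $\sigma_{f,g}$ preserves the inner products, i.e. that $\langle \sigma_{f,g}(x\otimes y),\sigma_{f,g}(z\otimes w)\rangle_{X(F)}$ equals the balanced inner product \eqref{balinnerproduct} on the domain. Here one must unwind both sides as sums over preimages. The target inner product at a point $u\in Z$ is a sum over $\{v : (g\circ f)(v)=u\}$ of $\overline{\sigma_{f,g}(x\otimes y)(v)}\,\sigma_{f,g}(z\otimes w)(v)$, while the domain inner product $\langle y,\varphi_{X(E_2)}(\langle x,z\rangle)w\rangle$ expands as a double sum over $g$-preimages and then $f$-preimages. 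The key is to reindex the double sum: the set $\{v: g(f(v))=u\}$ is in bijection with pairs $(p,v)$ where $g(p)=u$ and $f(v)=p$, so that the composite-preimage sum factors exactly as the iterated sum. This is precisely where the local-homeomorphism hypothesis guarantees the preimage sets are finite and the reindexing is legitimate; matching the two expressions term by term gives the isometry property.

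Finally, once $\sigma_{f,g}$ is shown to be isometric, surjectivity onto $X(F)=C(Z)$ can be obtained by exhibiting enough elementary tensors whose images separate points and generate $C(Z)$, or more cleanly by using a Parseval frame: choosing frames for $X(E_1)$ and $X(E_2)$ built from partitions of unity subordinate to the open sets on which $f$ and $g$ are homeomorphisms, their tensor products map under $\sigma_{f,g}$ to a spanning family in $X(F)$, forcing the range to be dense and hence (being a closed isometric image) all of $X(F)$. The intertwining of the left and right $A$-actions is routine from the defining formula, and this completes the identification of $\sigma_{f,g}$ as the desired isomorphism.
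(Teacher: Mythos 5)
Your proposal is correct and follows essentially the same route as the paper: define the map on elementary tensors, check that it kills the balancing subspace \eqref{notapN}, verify that it preserves inner products by factoring the sum over $(g\circ f)$-preimages as an iterated sum over $g$-preimages and then $f$-preimages, and extend the resulting isometry to the completion. The one place you overcomplicate matters is surjectivity: since $\sigma_{f,g}(x\otimes 1)=x$ for every $x\in C(Z)$, the image is already all of $X(F)$ before completing (this is exactly the paper's observation ``taking $y=1$''), so no Parseval-frame or density argument is needed --- and note that ``separating points and generating'' is a Stone--Weierstrass criterion for subalgebras rather than linear subspaces, so that variant of your closing step would need the extra remark that the image is an ideal (the left action being pointwise multiplication) before it applies.
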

\begin{proof}
	Define    $\sigma:C(Z)\times C(Z)\rightarrow C(Z) $ by 
	$\sigma(x,y)(z)=x(z)y(f(z))$ for all  $x,y\in C(Z)$.
	Clearly  $\sigma$ is bilinear.
	Taking $y=1$  implies that  $\sigma$ is surjective.  	
	Then the universal property of the  algebraic tensor product  gives a unique surjective linear map   $\tilde{\sigma}:C(Z)\odot C(Z)\rightarrow C(Z)$   satisfying  $\tilde{\sigma}(x\odot y)(z)=x(z) y(f(z))$ for all $x\odot y\in C(Z)\odot C(Z)$. 
	Since $\tilde{\sigma}$ vanishes on the elements of the form \eqref{notapN}, it induces  a surjective linear map   $\sigma_{f,g}:C(Z)\odot_A C(Z)\rightarrow C(Z)$ satisfying \eqref{isomor51}.	
		
	To show that  $\sigma_{f,g}$ preserves the  actions, let  $x\otimes y\in C(Z)\odot_A C(Z)$, $a\in C(Z)$ and  $z\in Z$. It follows from \eqref{isomor51} that  
	\begin{align*}
	\sigma_{f,g}\big(x\otimes y\cdot a\big)(z)	&=x(z) (y\cdot a)(f(z))
	=x(z)y(f(z))a(g\circ f(z))\\
	&=\sigma_{f,g}(x\otimes y)(z)a(g\circ f(z))
	=\big(\sigma_{f,g}(x\otimes y)\cdot a\big)(z).
	\end{align*}
	Similarly   for the left action, we have $\sigma_{f,g}\big(a\cdot(x\otimes y)\big)(z)=\big(a\cdot \sigma_{f,g}(x\otimes y)\big)(z)$. 
	
	To see that   $\sigma_{f,g}$   preserves the  inner products, let $x\otimes y, x'\otimes y'\in C(Z)\odot_A C(Z)$. Since all the range maps are the identity and the source maps are $f,g,$ and $g\circ f$, respectively, we have
	\begin{align}
	\big\langle \sigma_{f,g}(x\otimes y),\sigma_{f,g}(x'\otimes y')\big\rangle(z)\notag&=\sum_{g\circ f(w)=z}\overline{\sigma_{f,g}(x\otimes y)(w)}\sigma_{f,g}(x'\otimes y')(w)\\
	\notag&=\sum_{g\circ f(w)=z}\overline{x(w) y(f(w))}x'(w) y'(f(w))\\
	\notag&=\sum_{g(v)=z}\Big(\sum_{ f(w)=v}\overline{x(w)}x'(w)\Big)\overline{ y(v)} y'(v)\\
\notag&=\sum_{g(v)=z}\langle x,x'\rangle(v)\overline{ y(v)} y'(v)\\
	&=\sum_{g(v)=z}\overline{ y(v)}\big(\langle x,x'\rangle\cdot y'\big)(v)\notag\\
	&=\big\langle y,\langle x,x'\rangle\cdot y'\big\rangle(z)\notag\\
	&=\big\langle x\otimes y,x'\otimes y'\big\rangle(z).\notag
	\end{align}
It follows that $\sigma_{f,g}$ is  an isometry on $C(Z)\odot_A C(Z)$, and hence it   extends  to an isomorphism  $\sigma_{f,g}$ of $X(E_1)\otimes_A X(E_2)$ onto $X(F)$ which satisfies  \eqref{isomor51}.
	\end{proof}
Now we can prove an analogue of \cite[Proposition~3.7]{Br} to build a product system.
\begin{prop}\label{prop1}
	Let $h_1,\dots,h_k$ be surjective  commuting   local homeomorphisms  on a compact Hausdorff space $Z$. For each $m\in \N^k$, let $X_m$ be the  graph correspondence associated to the topological graph $(Z,Z,\id,h^m)$. Suppose that $X:=\bigsqcup_{m\in \N^k}X_m$ and $A:=C(Z)$. Let $\sigma_{m,n}:X_m\otimes_A X_n\rightarrow X_{m+n}$ be the isomorphism obtained by applying  Lemma~\ref{coresiso} with the local homeomorphisms $h^m,h^n$.  Then $X$ is a compactly aligned  product system of essential right Hilbert $A$--$A$  bimodules over $\N^k$  with the multiplication given  by $xy := \sigma_{m,n}(x\otimes y)$ for
	$x\in X_m, y\in Y_n$, so that
\begin{align}\label{multiplicationformula1}
	(xy)(z)=x(z)y(h^m(z)) \text{ for } z\in Z.
\end{align}	
The left action of $A$ on each fibre $X_m$ is by compact operators.
\end{prop}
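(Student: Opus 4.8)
The plan is to build everything on the explicit multiplication formula \eqref{multiplicationformula1} coming from Lemma~\ref{coresiso}, and to isolate the one genuinely analytic ingredient, namely the existence of a Parseval frame for each fibre; once that is in place, compact alignment and the compactness of the left action fall out formally.

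First I would dispose of the product-system axioms. For (P1), I would identify the fibre $X_0$, which is the graph correspondence of $(Z,Z,\id,\id)$, with the standard bimodule $_A A_A$: since the range and source maps are both the identity, the module actions and the inner product of $X_0$ reduce to the standard bimodule structure on $A=C(Z)$. For (P2), I would apply Lemma~\ref{coresiso} with $f=h^m$ and $g=h^n$; these are surjective local homeomorphisms because a composition of surjective local homeomorphisms is again one, and commutativity of the $h_i$ gives $h^n\circ h^m=h^{m+n}$, so the lemma produces the isomorphism $\sigma_{m,n}\colon X_m\otimes_A X_n\to X_{m+n}$ satisfying \eqref{isomor51}, which is exactly \eqref{multiplicationformula1}. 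For (P3), I would read off from \eqref{multiplicationformula1}, using $h^0=\id$, that for $a\in X_0$ and $x\in X_m$ one has $(ax)(z)=a(z)x(z)=(\varphi_m(a)x)(z)$ and $(xa)(z)=x(z)a(h^m(z))=(x\cdot a)(z)$.

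Next I would verify that these multiplications are associative, so that $X$ is genuinely a semigroup and the coherence condition for $\sigma$ holds. This is a direct computation: evaluating both $\sigma_{m+n,p}(\sigma_{m,n}(x\otimes y)\otimes s)$ and $\sigma_{m,n+p}(x\otimes\sigma_{n,p}(y\otimes s))$ at a point $z$ via \eqref{multiplicationformula1} yields $x(z)y(h^m(z))s(h^{m+n}(z))$ in both cases, where I again use $h^n\circ h^m=h^{m+n}$. I would also note that each $X_m$ is essential: since $A$ is unital and $(\varphi_m(1)x)(z)=1(r(z))x(z)=x(z)$, we have $\varphi_m(1)=1_{X_m}$, so essentiality follows from the observation recorded in \S2.

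The substantive step, and the one I expect to be the main obstacle, is exhibiting a Parseval frame for each fibre $X_m$. Because $h^m$ is a local homeomorphism on the compact space $Z$, I would choose a finite open cover $\{V_i\}_{i=1}^d$ of $Z$ on which $h^m$ is injective, take a partition of unity $\{\rho_i\}$ with $\supp\rho_i\subseteq V_i$ and $\sum_i\rho_i=1$, and set $x_i:=\sqrt{\rho_i}\in C(Z)$. The injectivity of $h^m$ on each $V_i$ collapses the inner-product sum so that $(x_i\cdot\langle x_i,x\rangle)(z)=\rho_i(z)x(z)$, and summing over $i$ recovers the reconstruction formula \eqref{reconstruction}. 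With a Parseval frame in hand, \eqref{reconstruction} gives $\sum_i\Theta_{x_i,x_i}=1_{X_m}$, so $1_{X_m}$ is compact; then $\varphi_m(a)=\varphi_m(a)1_{X_m}=\sum_i\Theta_{a\cdot x_i,x_i}\in\KK(X_m)$ shows the left action is by compact operators. Finally, since $1_{X_m}\in\KK(X_m)$ the compacts form a unital ideal in $\LL(X_m)$, forcing $\KK(X_m)=\LL(X_m)$ for every $m$; hence for any $m,n$ (where $m\vee n<\infty$ automatically in $\N^k$) and any $S\in\KK(X_m)$, $T\in\KK(X_n)$, the product $\iota^{m\vee n}_m(S)\iota^{m\vee n}_n(T)$ lies in $\LL(X_{m\vee n})=\KK(X_{m\vee n})$, so $X$ is compactly aligned with no further work.
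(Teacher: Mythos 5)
Your proof is correct, and it follows the paper's construction for most of the argument: the verification of (P1)--(P3), essentiality, and the partition-of-unity construction $\tau_i=\sqrt{\rho_i}$ over a cover on which $h^m$ is injective are exactly what the paper does (the paper directly computes $\sum_j\Theta_{a\cdot\xi_j,\xi_j}=\varphi_m(a)$ pointwise, which is the same collapsing-sum calculation you perform; your route of first establishing the reconstruction formula and then writing $\varphi_m(a)=\varphi_m(a)\cdot 1_{X_m}=\sum_i\Theta_{a\cdot x_i,x_i}$ is an equivalent repackaging). Where you genuinely diverge is the final step: the paper concludes compact alignment by citing Fowler's result that a product system over $\N^k$ whose left actions are by compact operators is automatically compactly aligned (\cite[Proposition~5.8]{Fo}), whereas you observe that the Parseval frame makes $1_{X_m}$ compact, so $\KK(X_m)$ is a closed ideal of $\LL(X_m)$ containing the identity, forcing $\KK(X_m)=\LL(X_m)$, after which compact alignment is vacuous. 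Your argument is sound and has the advantage of being self-contained -- it exposes why compact alignment is trivial for these particular bimodules, rather than appealing to a general theorem about product systems -- while the paper's citation is shorter and applies in situations where the fibres need not have compact identity operators. You also spell out associativity of the multiplication, which the paper dismisses as "an easy computation"; that is a harmless but welcome addition, since associativity is what makes the coherence of the maps $\sigma_{m,n}$ legitimate.
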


\begin{proof}
	An easy computation using \eqref{multiplicationformula1} shows that $X$ is a semigroup. Since the source map in the fibre $X_0$ is the identity, $X_0=_A{A}_A$ and therefore (P1) holds. Lemma~\ref{coresiso} gives
	(P2). Let $a\in A$ and $x\in X_m$. Then  (P3) follows from: 
	\[ax(z)=a(z)x(z)=(a\cdot x)(z)\text{ and } xa(z)=x(z)a(h^m(z))=(x\cdot a)(z).\]		
	
	To show that the fibre $X_m$ is essential, notice that $A=C(Z)$ is unital with the  identity $1_{C(Z)}:Z\rightarrow \C$ defined by $1_{C(Z)}(z)=1$ for all $z\in Z$. Since the left action is by pointwise multiplication,   $\varphi_m(1_{C(Z)})x=x$ for all $x\in X_m$. Thus  $X_m$ is essential. 
	
	To  see that the left action of $A$ on   the fibre $X_m$ is by compact operators,  choose an open cover $\{U_j:1\leq j\leq d \}$  of  $Z$ such that  $h^m|_{U_j}$ is injective and choose a partition of unity $\{\rho_j\}$ subordinate to $\{U_j\}$.  Define   $\xi_j:=\sqrt{\rho_j}$. We aim to show that 
	for each $a\in A$, the left action of $a$ on the fibre $X_m$ is by $\sum_{j=1}^{d}\Theta_{a\cdot\xi_j,\xi_j}$.	
	
	Take $x\in X_m$ and $z\in Z$, we have
	\begin{align*}
	\Big(\sum_{j=1}^{d}\Theta_{a\cdot\xi_j,\xi_j}(x)\Big)(z)&=\sum_{j=1}^{d}\big((a\cdot\xi_j)\cdot \langle\xi_j,x \rangle \big)(z)
	=\sum_{j=1}^{d}(a\cdot\xi_j)(z)\langle\xi_j,x\rangle(h^m(z))\\
	&=\sum_{j=1}^{d}a(z)\xi_j(z)\sum_{h^{m}(w)=h^{m}(z)}\overline{\xi_j(w)}x(w).
	\end{align*}	
	Since $h^{m}$ is injective on each $\supp \xi_j$, 
	\begin{align*}
	\Big(\sum_{j=1}^{d}\Theta_{a\cdot\xi_j,\xi_j}(x)\Big)(z)&=	\sum_{j=1}^{d}a(z)\xi_j(z)\overline{\xi_j(z)}x(z)
	=a(z)x(z)\sum_{j=1}^{d}|\xi_j(z)|^2
	=a(z)x(z).
	\end{align*}	
	Thus $\sum_{j=1}^{d}\Theta_{a\cdot\xi_j,\xi_j}=\varphi_m(a)$.  Since the left action on each fibre is by compact operators, \cite[Proposition 5.8]{Fo} implies  that $X$ is a compactly aligned product system over $\N^k$.
\end{proof}

\section{KMS states and the subinvariance relation}

Suppose that $\phi$ is a KMS state on the Toeplitz algebra of a directed graph or a $k$-graph, and that $\{p_v\}$ are the projections associated to the vertices of the graph. We know from \cite{aHLRS1, aHLRS} that the KMS condition implies that the vector $\big(\phi(p_v)\big)$ satisfies an inequality, which is known in Perron-Frobenius theory as a subinvariance relation. The constructions of KMS states in \cite{aHLRS1, aHLRS} depend crucially on being able to find the general solution of this relation. By drawing an analogy between the continuous systems of \cite{AaHR} and those in \cite{aHLRS1}, we found a similar inequality in \cite[\S4]{AaHR}, and were again able to describe the general solution \cite[Proposition~4.2]{AaHR}. Here we find and solve a similar relation by  analogy with the situation for $k$-graphs in \cite[\S4]{aHLRS}.

Throughout this section we consider a family $\{h_i:1\leq i\leq k\}$ of commuting surjective  local homeomorphisms on a  compact space $Z$, and define $h^n:=\prod_{i=1}^kh_i^{n_i}$ for $n\in \N^k$. We denote by $X$ the associated product system, as in Proposition~\ref{prop1}. For each finite Borel measure $\nu$ on $Z$ and each $n\in \N^k$, we define another measure $R^n(\nu)$ by
\begin{align}\label{defR}
	\int a\,d\big(R^n(\nu)\big)
	=\int\sum_{h^n(w)=z}a(w)\,d\nu(z)\quad\text{for } a\in C(Z).
	\end{align}
We then have $R^m\circ R^n=R^{m+n}=R^n\circ R^m$. By using the same formula for signed measures, we can view the $R^n$ as bounded linear operators on the dual space $C(Z)^*$ (see \cite[Theorem~7.17]{F}, for example). This is helpful when we want to make sense of infinite sums involving these operations, which we can do by showing that the sums are absolutely convergent in the Banach space of bounded operators on $C(Z)^*$. 

With this notation we have the following analogue of \cite[Proposition~4.1]{aHLRS}. Recall that we write $\psi$ for the canonical representation of $X$ in $\NT(X)$.

\begin{prop}\label{suninvprop}
Let  $r\in (0,\infty)^k$ and suppose that $\alpha : \R\rightarrow \Aut\NT(X)$ is given in terms of the  gauge action by $\alpha_t=\gamma_{e^{ i t r}}$.
 Suppose that  $\phi$ is a KMS$_\beta$ state  of $(\NT(X),\alpha)$, and $\mu$ is the probability measure on $Z$ such that $\phi(\psi_0(a))=\int a\,d\mu$ for all $a\in C(Z)$. Then for every subset $K$ of  $\{1,\dots, k\}$, 
	\begin{equation}\label{subinvariance}
\int a\,d\Big(\prod_{i\in K}\big(1-e^{-\beta r_i}R^{e_i}\big)\mu\Big)\geq 0 \quad\text{for every positive $a\in C(Z)$.}
\end{equation}
We will call this relation the \textit{subinvariance relation}.
\end{prop}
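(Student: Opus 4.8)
The plan is to realise the left-hand side of \eqref{subinvariance} as the value of $\phi$ on a manifestly positive element of $\NT(X)$. Since $C(Z)$ is unital and $X$ is essential, $\psi_0(1_{C(Z)})$ is an identity for $\NT(X)$; write $1$ for it. For each $i$ set $P_i:=\psi^{(e_i)}(1_{e_i})$, which is a projection, and put $Q_i:=1-P_i$. By the Nica-covariance relation \eqref{nicacov-using-lemma} and $e_i\vee e_j=e_i+e_j$, the $P_i$ commute and, more generally, $\prod_{i\in L}P_i=\psi^{(n_L)}(1_{n_L})$ for any $L\subseteq\{1,\dots,k\}$, where $n_L:=\sum_{i\in L}e_i$. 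Hence $Q_K:=\prod_{i\in K}Q_i$ is a projection. The target of the whole computation is the identity $\phi\big(\psi_0(a)Q_K\big)=\int a\,d\big(\prod_{i\in K}(1-e^{-\beta r_i}R^{e_i})\mu\big)$, together with positivity of its left-hand side.

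The computational heart is a single formula: for every $n\in\N^k$ and $a\in C(Z)$,
\[
\phi\big(\psi_0(a)\,\psi^{(n)}(1_n)\big)=e^{-\beta (r\cdot n)}\int a\,d\big(R^n\mu\big),\qquad r\cdot n:=\sum_{i=1}^k r_i n_i.
\]
To prove it I would take the Parseval frame $\{\eta_j\}$ of $X_n$ built in Proposition~\ref{prop1} from a partition of unity subordinate to an open cover on which $h^n$ is injective, so that $\psi^{(n)}(1_n)=\sum_j\psi_n(\eta_j)\psi_n(\eta_j)^*$ and $\psi_0(a)\psi_n(\eta_j)=\psi_n(a\cdot\eta_j)$. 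Each $\psi_n(\eta_j)$ is analytic with $\alpha_{i\beta}(\psi_n(\eta_j))=e^{-\beta(r\cdot n)}\psi_n(\eta_j)$, so the KMS$_\beta$ condition together with (T2) gives $\phi(\psi_n(a\cdot\eta_j)\psi_n(\eta_j)^*)=e^{-\beta(r\cdot n)}\phi\big(\psi_0(\langle\eta_j,a\cdot\eta_j\rangle)\big)$. Summing over $j$, the reconstruction formula $\sum_j|\eta_j(w)|^2=1$ collapses $\sum_j\langle\eta_j,a\cdot\eta_j\rangle(z)$ to $\sum_{h^n(w)=z}a(w)$, which by the definition \eqref{defR} of $R^n$ is exactly $\int a\,d(R^n\mu)$ after integrating against $\mu$.

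Granting this formula, I would expand $Q_K=\sum_{L\subseteq K}(-1)^{|L|}\prod_{i\in L}P_i=\sum_{L\subseteq K}(-1)^{|L|}\psi^{(n_L)}(1_{n_L})$ and apply the formula termwise. Since $r\cdot n_L=\sum_{i\in L}r_i$ and $R^{n_L}=\prod_{i\in L}R^{e_i}$ by $R^m\circ R^n=R^{m+n}$, the alternating sum reassembles as $\prod_{i\in K}(1-e^{-\beta r_i}R^{e_i})$ acting on $\mu$, giving the displayed identity for $\phi(\psi_0(a)Q_K)$.

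Finally, positivity. I would first check that $\psi_0(c)$ commutes with each $P_i$ for all $c\in C(Z)$: using a Parseval frame $\{\eta_j\}$ of $X_{e_i}$, both $\psi_0(c)P_i$ and $P_i\psi_0(c)$ equal $\psi^{(e_i)}(\varphi_{e_i}(c))$, since $\varphi_{e_i}(c)=\sum_j\Theta_{c\cdot\eta_j,\eta_j}=\sum_j\Theta_{\eta_j,\bar c\cdot\eta_j}$. Hence $\psi_0(a)$ commutes with the projection $Q_K$. For $a\geq 0$, writing $b=\psi_0(\sqrt a)=b^*$, we get $\psi_0(a)Q_K=b^2Q_K=bQ_Kb=(Q_Kb)^*(Q_Kb)$, so $\phi(\psi_0(a)Q_K)\geq0$ because $\phi$ is a state. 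Combined with the identity above, this is precisely \eqref{subinvariance}. The main obstacle is the bookkeeping in the key formula: correctly applying the KMS condition to the frame decomposition and verifying that the frame sum telescopes via the reconstruction formula to the branched sum defining $R^n$. The combinatorial reassembly of the product and the commutation/positivity argument are then routine.
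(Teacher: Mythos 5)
Your proposal is correct, and its first half is essentially the paper's own argument: the paper likewise combines the KMS condition with partition-of-unity Parseval frames to show $e^{-\beta r\cdot e_J}\int a\,d(R^{e_J}\mu)=\phi\big(\psi_0(a)\,\psi^{(e_J)}(1_{e_J})\big)$ for each $J\subseteq K$ and then reassembles the alternating sum, so both proofs reduce to positivity of $\phi$ on the very same element $\psi_0(a)\prod_{i\in K}\big(1-\psi^{(e_i)}(1_{e_i})\big)$. Where you genuinely diverge is the positivity step. The paper transports this element to the Fock representation, invokes faithfulness of $T_*$, and uses Lemma~\ref{lemmapositive} to evaluate the sum on each summand $X_n$ of the Fock module, finishing with the counting argument that the alternating sum over nonempty subsets of $I_n\cap K$ vanishes. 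You instead stay inside $\NT(X)$: Nica covariance makes the $P_i=\psi^{(e_i)}(1_{e_i})$ commuting projections with $\prod_{i\in L}P_i=\psi^{(e_L)}(1_{e_L})$, the identity $\psi_0(c)P_i=\psi^{(e_i)}(\varphi_{e_i}(c))=P_i\psi_0(c)$ shows that $\psi_0(a)$ commutes with $Q_K$, and then $\psi_0(a)Q_K=\big(Q_K\psi_0(\sqrt{a})\big)^*\big(Q_K\psi_0(\sqrt{a})\big)\geq 0$. This is shorter and more transparent --- the subinvariance inequality becomes literally the statement that a state is nonnegative on an element of the form $x^*x$ --- and it avoids both the Fock-module case analysis and the appeal to injectivity of $T_*$. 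What the paper's route buys is reusability: Lemma~\ref{lemmapositive} and the Fock-positivity technique are needed again later (for instance in the proof of Theorem~\ref{thm3}, to show the generators of $\ker q$ are positive), so the authors get extra mileage from that machinery. One cosmetic point: you cite \eqref{nicacov-using-lemma}, which the paper states for Hilbert-space representations; for the universal representation $\psi$ you should instead invoke Fowler's $C^*$-algebraic form of Nica covariance and note that $\iota_{e_i}^{e_i\vee e_j}(1_{e_i})=1_{e_i\vee e_j}$, which yields exactly the relation $P_iP_j=\psi^{(e_i\vee e_j)}(1_{e_i\vee e_j})$ that your commutation argument needs.
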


For the calculations in the proof, we need a lemma.
\begin{lemma}\label{lemmapositive}
 Let $T$ be the Fock representation of $X$ and let $n \in \N^k$. Suppose that $\{\rho_l:1\leq l\leq d\}$ is a  partition of unity  such that $h^n$ is injective on each $\supp \rho_l$, and define  $\tau_l:=\sqrt{\rho_l}$. Then for $x\in X_m$ we have
 \begin{equation*}
 \sum_{l=1}^{d}T_n(\tau_l)T_n(\tau_l)^*(x)=
 \begin{cases} x&\text{if $m\geq n$}\\
 0&\text{otherwise.}
 \end{cases}
 \end{equation*}
\end{lemma}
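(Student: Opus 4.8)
The plan is to reduce everything to the reconstruction formula for $X_n$ together with the explicit description of the Fock adjoint in \eqref{fockadjoint}. First I would record that $\{\tau_l\}$ is a Parseval frame for $X_n$: since $h^n$ is injective on each $\supp\rho_l$, the computation already carried out in the proof of Proposition~\ref{prop1} gives, for $u\in X_n$ and $z\in Z$, that $(\tau_l\cdot\langle\tau_l,u\rangle)(z)=\rho_l(z)u(z)$, and summing over $l$ yields $\sum_{l=1}^d\tau_l\cdot\langle\tau_l,u\rangle=u$ because $\{\rho_l\}$ is a partition of unity. This reconstruction formula is the only property of the $\tau_l$ I will use.

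Next I would split into the two cases of the statement. Since $\tau_l\in X_n$ we have $r(\tau_l)=n$, and the Fock adjoint $T_n(\tau_l)^*$ vanishes on every summand $X_m$ with $m\notin n+\N^k$, that is, with $m\not\geq n$. Hence for $x\in X_m$ with $m\not\geq n$ each term $T_n(\tau_l)T_n(\tau_l)^*(x)$ is zero, which disposes of the second case immediately.

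For the case $m\geq n$, set $q:=m-n$, so $\sigma_{n,q}:X_n\otimes_A X_q\to X_m$ is the structure isomorphism of the product system. By linearity and continuity of the operators involved it suffices to treat $x=\sigma_{n,q}(u\otimes v)$ with $u\in X_n$ and $v\in X_q$. For such $x$, formula \eqref{fockadjoint} gives $T_n(\tau_l)^*(x)=\langle\tau_l,u\rangle\cdot v\in X_q$, and applying $T_n(\tau_l)$ returns $\sigma_{n,q}\big(\tau_l\otimes(\langle\tau_l,u\rangle\cdot v)\big)\in X_m$. Using the $A$-balancing $\tau_l\otimes(\langle\tau_l,u\rangle\cdot v)=(\tau_l\cdot\langle\tau_l,u\rangle)\otimes v$ and the linearity of $\sigma_{n,q}$ in its first leg, I would sum over $l$ to get
\[
\sum_{l=1}^d T_n(\tau_l)T_n(\tau_l)^*(x)=\sigma_{n,q}\Big(\Big(\sum_{l=1}^d\tau_l\cdot\langle\tau_l,u\rangle\Big)\otimes v\Big).
\]
The reconstruction formula then collapses the inner sum to $u$, giving $\sigma_{n,q}(u\otimes v)=x$, as required.

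The only real care needed is in this last step: tracking which fibre each operator maps into (so that \eqref{fockadjoint} applies with the correct decomposition $m=n+q$), and correctly using the balancing relation $u\cdot a\otimes v=u\otimes a\cdot v$ to bring $\langle\tau_l,u\rangle$ back onto the first leg before invoking the reconstruction formula. The reduction from a general $x\in X_m$ to elementary tensors is routine, since $\sigma_{n,q}$ is an isomorphism and the maps $T_n(\tau_l)$, $T_n(\tau_l)^*$ are bounded and linear. One could alternatively bypass the case analysis by using the Parseval frame property with Lemma~\ref{nicaeasy} to identify $\sum_l T_n(\tau_l)T_n(\tau_l)^*$ with $T^{(n)}(1_n)$, but evaluating that projection on each fibre amounts to exactly the computation above, so the direct route is preferable.
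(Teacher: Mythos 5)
Your proof is correct and takes essentially the same route as the paper's: both dispose of the case $m\not\geq n$ by noting that the Fock adjoint $T_n(\tau_l)^*$ vanishes off the summands $X_p$ with $p\geq n$, and in the case $m\geq n$ both reduce to an elementary tensor $x=\sigma_{n,m-n}(u\otimes v)$, apply \eqref{fockadjoint}, rebalance the tensor to move $\langle\tau_l,u\rangle$ onto the first leg, and collapse the sum with the reconstruction formula for the Parseval frame $\{\tau_l\}$ of $X_n$. The only cosmetic difference is that you verify the frame property directly by the partition-of-unity computation, where the paper cites Lemma~\ref{pframe}(a).
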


\begin{proof}
If $m\ngeq n$, then the adjoint formula~\eqref{fockadjoint}  for the Fock representation implies that  $T_n(\tau_l)T_n(\tau_l)^*(x)=0$ for all $l$. So we suppose that  $m\geq n$. We may suppose that $x=\sigma_{n,m-n}(x'\otimes x'')$ for some $x'\in X_n$ and $x''\in X_{m-n}$. Now we compute:
\begin{align*}
\Big(\sum_{l=1}^dT_{n}(\tau_l) {T_{n}( \tau_l)}^*\Big)	\notag	\big(\sigma_{n,m-n}(x'\otimes x'')\big)
&=\sum_{l=1}^dT_{n}(\tau_l)\big(\langle \tau_l, x'\rangle\cdot  x''\big)\\
\notag	&=\sum_{l=1}^d\sigma_{n,m-n}\big(\tau_l\otimes\langle \tau_l, x'\rangle\cdot  x''\big)\\
&=\sigma_{n,m-n}\Big(\!\sum_{l=1}^d\tau_l\cdot\langle \tau_l, x'\rangle\otimes  x''\Big), 
\end{align*}
which is $\sigma_{n,m-n}(x'\otimes x'')=x$ because $\{\tau_l:1\leq l\leq d\}$ is a Parseval frame for the fibre $X_m$ (by Lemma~\ref{pframe}~(a)).
\end{proof}

\begin{proof}[Proof of Proposition~\ref{suninvprop}]
Let  $a$ be a  positive element of $C(Z)$. 	For $K=\emptyset$, we have $\int a\,d\mu\geq 0$ because $a$ is positive. So we assume $K\neq \emptyset$. Following the calculations in \cite[Proposition~4.1]{aHLRS}, we write
\begin{align}\label{productsmeasurs}
\int a\,d\Big(\prod_{i\in K}(1-e^{-\beta r_i}R^{e_i})\Big)\mu=\sum_{\emptyset \subseteq J\subseteq K}(-1)^{|J|}e^{-\beta r\cdot e_J}\int a\,d\big(R^{e_J}\mu\big).
\end{align}
The $J=\emptyset$ term is $\int a\,d\mu\geq 0$. So we take $\emptyset\subsetneq J\subseteq K$, and, following the proof of \cite[Proposition~4.1]{AaHR}, write the integral in the $J$-summand on the right of \eqref{productsmeasurs} in terms of elements of $\NT(X)$. 

Choose an open cover $\{U_l^J:1\leq l\leq d\}$ of $Z$ such that $h^{e_J}|_{U_l^J}$ is injective and choose a partition of unity $\{\rho_l^J:1\leq l\leq d\}$ subordinate to $\{U_l^J\}$. Define $\tau_l^J:=\sqrt{\rho_l}$.
Since the fibre  $X_{e_J}$ is the graph correspondence $(Z,Z,\id , h^{e_J})$, the calculation in the first paragraph of \cite[Proposition~4.1]{AaHR} in  $X_{e_J}$ gives
\[
\int a\,d\big(R^{e_J}\mu\big)=\phi\Big(\sum_{l=1}^d{\psi_{e_J}( \tau_l^J)}^*\psi_{e_J}(a\cdot\tau_l^J)\Big).
\]
Now we apply the KMS relation to get		
\begin{align*}
\int a\,d\big(R^{e_J}\mu\big)&=e^{\beta r\cdot e_J}\phi\Big(\sum_{l=1}^d\!\psi_{e_J}(a\cdot\tau_l^J) {\psi_{e_J}( \tau_l^J)}^*\!\Big)\\
&=e^{\beta r\cdot e_J}\phi\Big(\sum_{l=1}^d\psi_0(a)\psi_{e_J}(\tau_l^J) {\psi_{e_J}( \tau_l^J)}^*\Big).\notag
\end{align*}
Now adding up over $J$ gives 
	\begin{align}
	\eqref{productsmeasurs}&=\int a\,d\mu+\sum_{\emptyset \subsetneq J\subseteq K}(-1)^{|J|}\phi\Big(\sum_{l=1}^d\psi_0(a)\psi_{e_J}(\tau_l^J) {\psi_{e_J}( \tau_l^J)}^*\Big)\notag\\
	&=\phi\Big(\psi_0(a)+\sum_{\emptyset \subsetneq J\subseteq K}(-1)^{|J|}\sum_{l=1}^d\psi_0(a)\psi_{e_J}(\tau_l^J) {\psi_{e_J}( \tau_l^J)}^*\Big).\label{6.3}
	\end{align}

We need to show that  the right-hand side of \eqref{6.3} is positive. Since $\phi$ is  a state, and the Fock representation $T_*$ is faithful, it suffices to prove that
\begin{align}\label{Fock}
	T_0(a)+\sum_{\emptyset \subsetneq J\subseteq K}(-1)^{|J|}T_0(a)\sum_{l=1}^dT_{e_J}(\tau_l^J) {T_{e_J}( \tau_l^J)}^*
	\end{align}
is positive on each summand $X_n$ of the Fock module. For $n=0$, the sum collapses to $T_0(a)$, which is positive because $a$ is. So we take $n\in \N^k$ such that $I_n:=\{i:n_i\not=0\}$ is nonempty, and $x\in X_n$. Lemma~\ref{lemmapositive} implies that the $J$-summand in \eqref{Fock} vanishes on $X_n$ for $n\ngeq e_J$. So only the summands with $n\geq e_J$ survive, and $n\geq e_J$ is equivalent to $J\subset I_n$. So Lemma~\ref{lemmapositive} says that
	\begin{align*}
	\Big(T_0(a)+&\sum_{\emptyset \subsetneq J\subseteq K}(-1)^{|J|}T_0(a)\sum_{l=1}^dT_{e_J}(\tau_l^J) {T_{e_J}( \tau_l^J)}^*\Big)(x)\\
&=T_0(a)(x)+\sum_{\emptyset \subsetneq J\subseteq I_n\cap K}(-1)^{|J|}T_0(a)(x)\\
	&=\sum_{\emptyset \subset J\subseteq I_n\cap K}(-1)^{|J|}T_0(a)(x).
	\end{align*}
If $I_n\cap K=\emptyset$, then the sum is absent, and $\eqref{Fock}=T_0(a)$ on $X_n$, which is positive. If $I_n\cap K\not=\emptyset$, the number of subsets of $I_n\cap K$ with odd cardinality equals the number of subsets with even cardinality, and the sum vanishes.	Thus \eqref{Fock} is positive on all summands of the Fock module, and therefore it is a positive operator  on $F(X)$.  Thus \eqref{Fock} is a positive operator, and \eqref{6.3} is nonnegative. This completes the proof of Proposition~\ref{suninvprop}.
\end{proof}

The next proposition characterises the solutions of the subinvariance relation \eqref{subinvariance}. It is a generalisation of   \cite[Proposition~4.2]{AaHR} and \cite[Theorem~6.1(a)]{aHLRS}.
\begin{prop}\label{series}
For each  $ 1\leq i\leq k$, set
	\begin{equation}\label{criticalpoint}
	\beta_{c_i}:=\limsup_{j\to \infty}\Big(j^{-1}\ln
	\Big(\max_{z\in Z}|h_i^{-j}(z)|\Big)\Big).
	\end{equation}
Let $r\in (0,\infty)^k$, and suppose  that $\beta\in (0,\infty)$ satisfies $\beta r_i> \beta_{c_i}$ for $1\leq i\leq k$.
		\begin{itemize}
		\item[(a)]  The series $\sum_{n\in \N^k}e^{-\beta r\cdot n}|h^{-n}(z)|$ converges uniformly for $z\in Z$ to a continuous function $f_\beta(z)\geq 1.$
		\item[(b)] Let $\varepsilon$ be a finite regular Borel measure on $Z$. Then  the series $\sum_{n\in \N^k}e^{-\beta r\cdot n}R^n\varepsilon$
		converges in norm in the  dual space $C(Z)^*$ with sum $\mu$, say. Then $\mu$ satisfies  the subinvariance  
		relation \eqref{subinvariance}, and we have 
		\begin{equation}\label{recoverep}
\varepsilon=\prod_{i=1}^{k}\big(1-e^{-\beta r_i}R^{e_i}\big)\mu.
\end{equation}
The measure $\mu$ is a probability measure if and only if $\int f_\beta\,d\varepsilon=1.$
	 \item[(c)] Suppose that $\mu$ is a probability measure which satisfies the subinvariance  relation \eqref{subinvariance}. 
	Then $\varepsilon=\prod_{i=1}^{k}\big(1-e^{-\beta r_i}R^{e_i}\big)\mu$  
	is a finite regular Borel measure  satisfying $\sum_{n\in \N^k}e^{-\beta r\cdot n}R^n\varepsilon=\mu$,
	and we have $\int f_\beta\,d\varepsilon=1.$ 
	\end{itemize} 
	\end{prop}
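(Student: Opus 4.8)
The plan is to derive all three parts from two ingredients: the scalar convergence estimate underlying (a), and the observation that $\sum_{n\in\N^k}e^{-\beta r\cdot n}R^n$ is the operator inverse of $\prod_{i=1}^k(1-e^{-\beta r_i}R^{e_i})$ on $C(Z)^*$. Since $R^m R^n=R^{m+n}$, the operators $R_i:=R^{e_i}$ commute and $R^n=\prod_i R_i^{n_i}$, so everything factors through the single-variable operators $R_i$.

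For (a), first I would record that $a_j^{(i)}:=\max_{z\in Z}|h_i^{-j}(z)|$ is submultiplicative, $a_{j+l}^{(i)}\le a_j^{(i)}a_l^{(i)}$, which comes from the composition count $|(f\circ g)^{-1}(z)|=\sum_{f(v)=z}|g^{-1}(v)|$. Fekete's subadditive lemma then turns the $\limsup$ defining $\beta_{c_i}$ into a genuine limit equal to $\inf_j j^{-1}\ln a_j^{(i)}$; hence $\beta r_i>\beta_{c_i}$ yields $a_j^{(i)}\le e^{j(\beta_{c_i}+\delta)}$ for all large $j$, with $\delta$ chosen so that $\beta_{c_i}+\delta<\beta r_i$, and so $\sum_j e^{-\beta r_i j}a_j^{(i)}<\infty$. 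The same composition count gives $\max_{z\in Z}|h^{-n}(z)|\le\prod_i a_{n_i}^{(i)}$, so the $k$-fold sum $\sum_n e^{-\beta r\cdot n}\max_{z\in Z}|h^{-n}(z)|$ is dominated by the product $\prod_i\big(\sum_{n_i}e^{-\beta r_i n_i}a_{n_i}^{(i)}\big)<\infty$. Since each $z\mapsto|h^{-n}(z)|$ is locally constant (a standard compactness argument, using that a surjective local homeomorphism of a compact space has finite fibres), the Weierstrass $M$-test gives uniform convergence to a continuous $f_\beta$, and $f_\beta\ge1$ because the $n=0$ term is identically $1$ and the rest are nonnegative.

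For (b), the bound $\|R^n\|\le\max_{z\in Z}|h^{-n}(z)|$ (immediate from the defining formula for $R^n$) combined with (a) shows $\sum_n e^{-\beta r\cdot n}R^n$ converges absolutely in $\LL(C(Z)^*)$; applied to the positive measure $\varepsilon$ it produces a positive finite measure $\mu$. The partial sums of $\sum_{m\ge0}(e^{-\beta r_i}R_i)^m$ telescope against $1-e^{-\beta r_i}R_i$ and the tail norms vanish, so $\sum_n e^{-\beta r\cdot n}R^n=\prod_i(1-e^{-\beta r_i}R_i)^{-1}$; applying $\prod_i(1-e^{-\beta r_i}R_i)$ to $\mu$ recovers \eqref{recoverep}. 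For the subinvariance relation I would not argue positivity directly, but instead write, for $K\subseteq\{1,\dots,k\}$, $\prod_{i\in K}(1-e^{-\beta r_i}R_i)\mu=\big(\prod_{i\notin K}\sum_{m\ge0}e^{-\beta r_i m}R_i^m\big)\varepsilon$, a positive operator applied to $\varepsilon\ge0$, hence a positive measure, which is exactly \eqref{subinvariance}. Finally $\mu(Z)=\int 1\,d\mu=\sum_n e^{-\beta r\cdot n}\int|h^{-n}(z)|\,d\varepsilon(z)=\int f_\beta\,d\varepsilon$, interchanging the uniformly convergent sum with the integral, which gives the probability criterion.

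For (c), given a probability measure $\mu$ satisfying subinvariance, setting $\varepsilon=\prod_i(1-e^{-\beta r_i}R_i)\mu$ makes $\varepsilon\ge0$ precisely the $K=\{1,\dots,k\}$ instance of \eqref{subinvariance}, so $\varepsilon$ is a finite positive (hence, by Riesz, regular) measure; applying the operator inverse from (b) gives $\sum_n e^{-\beta r\cdot n}R^n\varepsilon=\mu$, and the mass computation of (b) gives $\int f_\beta\,d\varepsilon=\mu(Z)=1$. The hard part will be (a): converting the $\limsup$-definition of $\beta_{c_i}$ into an honest summability bound and collapsing the $k$-dimensional sum into a product of one-dimensional geometric-type series. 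The submultiplicativity/Fekete step and the composition bound $\max_{z\in Z}|h^{-n}(z)|\le\prod_i a_{n_i}^{(i)}$ are exactly what make this work; once (a) is in place, parts (b) and (c) are formal operator-inverse bookkeeping.
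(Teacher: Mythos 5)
Your proof is correct and takes essentially the same route as the paper's: you reduce the $k$-dimensional sum to a product of one-dimensional geometric-type series via the fibre-count bound $|h^{-n}(z)|\leq\prod_i\max_z|h_i^{-n_i}(z)|$, realise $\sum_{n}e^{-\beta r\cdot n}R^n$ and $\prod_{i=1}^k\big(1-e^{-\beta r_i}R^{e_i}\big)$ as mutually inverse bounded operators on $C(Z)^*$ by telescoping, obtain the subinvariance relation by writing $\prod_{i\in K}\big(1-e^{-\beta r_i}R^{e_i}\big)\mu$ as a positive operator $\prod_{i\notin K}Q_i$ applied to $\varepsilon\geq 0$, and compute $\mu(Z)=\int f_\beta\,d\varepsilon$ by Tonelli --- exactly the paper's structure. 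The only cosmetic differences are your Fekete/submultiplicativity step, which is superfluous (the $\limsup$ definition of $\beta_{c_i}$ alone already gives $e^{-j\beta r_i}\max_z|h_i^{-j}(z)|<e^{-j\delta_i}$ for all large $j$), and your direct telescoping argument where the paper instead cites the one-variable identity $\big(1-e^{-\beta r_i}R^{e_i}\big)Q_i\varepsilon=\varepsilon$ from \cite{AaHR}.
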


\begin{proof}	
	For part  (a),	take  $1\leq i \leq k$. Applying the calculation of the first paragraph in the proof of  
	\cite[Proposition~4.2]{AaHR} to the local homeomorphism $h_i$ gives us  $\delta_i\in (0,\infty)$ and $M_i\in \N$ such that 
	\begin{equation*}
	l\in \N, l\geq M_i\Longrightarrow e^{-l\beta r_i}\max_z|h^{-l}_i(z)|< e^{-l\delta_i }.
	\end{equation*}
Then with $M=(M_1,\dots,M_k)$ and $N\in\N^k$, we have 
\begin{align*}
	\sum_{M\leq n\leq N} e^{-\beta r \cdot  n}|h^{-n}(z)|&=\sum_{M\leq n\leq N} 
	e^{-\beta r \cdot  n}\big|\big(h_1^{n_1}\circ\cdots\circ h_k^{n_k}\big)^{-1}(z)\big|\\
&=\sum_{M\leq n\leq N} \prod_{i=1}^k \Big(e^{-\beta r_i\cdot n_i} \max_z\big|h_i^{-n_i}(z)\big|\Big)\\
	&\leq\prod_{i=1}^k\sum_{M_i\leq n_i\leq N_i} \Big( e^{-\beta r_i\cdot n_i} \max_z\big|h_i^{-n_i}(z)\big|\Big)\\
	&\leq  \prod_{i=1}^k \sum_{M_i\leq n_i\leq N_i} e^{-\delta_i n_i}.\notag
	\end{align*}
	Now let $N\rightarrow\infty$ in $\N^k$, in the sense that $N_i\rightarrow \infty$ for $1\leq i \leq k$.  Since each series $\sum_{n_i=M_i}^\infty e^{-\delta_i n_i}$ is convergent, it follows that
$\sum_{ n=M}^\infty e^{-\beta r \cdot  n}|h^{-n}(z)|$ converges uniformly for $z\in Z$. Since $h^n$ is a local homeomorphism on $Z$ for all $n\in \N^k$,  \cite[Lemma~2.2]{BRV} implies that  $z\mapsto |h^{-n}(z)|$ is locally constant, and hence continuous. Thus   
\[
f_\beta(z):=\sum_{n\in \N^k} e^{-\beta n}|h^{-n}(z)|
\]
is the uniform limit of a sequence of continuous functions, and is therefore continuous. The term corresponding to $n=0$ is $1$, so $f_\beta\geq 1$.

	For part (b),  let $\varepsilon$ be a finite regular Borel measure on $Z$.	Take $M$ and $\{\delta_i:1\leq i\leq k\}$ as in part (a). We want to   show that
	$\sum_{n\geq M}e^{-\beta r\cdot n}R^n\varepsilon$ converges in the norm of  $C(Z)^*$. We calculate the $N$-th partial sum  using the formula
	\eqref{defR} for $R^n$. Let $g\in C(Z)$, we have	
	\begin{align}\label{YYYY}
	\Big|\sum_{M\leq n\leq N} e^{-\beta r\cdot n}\int g\,d(R^n\varepsilon)\Big|
	\notag&=\Big|\sum_{M\leq n\leq N}e^{-\beta r\cdot n}\int \sum_{h^n(w)=z}g(w)\,d\varepsilon(z)\Big|\\
	\notag&\leq \sum_{M\leq n\leq N} e^{-\beta r\cdot n} |h^{-n}(z)|\,\|\varepsilon\|_{C(Z)^*}\|g\|_\infty\\
	&\leq\|\varepsilon\|_{C(Z)^*}\|g\|_\infty \prod_{i=1}^{k}\sum_{M_i\leq n_i\leq N_i} e^{-\delta_i n_i}.
	\end{align}
	Now when $N\rightarrow \infty$,  all the series $\sum_{ n_i=M_i}^{\infty} e^{-\delta_i n_i}$ are convergent and hence the series $\sum_{n\in \N^k} e^{-\beta r\cdot n}R^n\varepsilon$ converges absolutely in the norm of $C(Z)^*$ with sum a functional $f$, say. The formula  \eqref{defR} for $R^n$ implies that   $f$  is  positive functional on $C(Z)$, and by the
	Riesz representation theorem is given by a  Borel  measure $\mu$ on $Z$.

	  Rearranging the absolutely convergent series that defines $\mu$, we find
	\begin{align}\label{arrangemu}
\mu&=	\sum_{n\in \N^k} e^{-\beta r\cdot n}R^n\varepsilon
=\sum_{n\in \N^k} e^{-\beta r\cdot n}\Big(\prod_{i=k}^{n_i}R^{n_ie_i}\varepsilon\Big)\\
&=\prod_{i=1}^k\Big(\sum_{n_i=0}^\infty e^{-\beta r_in_i}R_i^{n_i}\varepsilon\Big)\notag
\end{align}
with $R_i:=R^{e_i}$. Applying the arguments in the proof of \cite[Proposition~4.2(b)]{AaHR} to the graph $(Z,Z,\id,h_i)$ shows that each series $\sum_{n_i=0}^\infty e^{-\beta r_in_i}R_i^{n_i}\varepsilon$ converges in norm in $C(Z)^*$ to a positive measure $Q_i\varepsilon$, and that we then have
\begin{equation}\label{quotefromk=1}
(1-e^{-\beta r_i}R_i)Q_i\varepsilon=\varepsilon.
\end{equation}
Equation \eqref{arrangemu} says that $\mu=\prod_{i=1}^k Q_i\varepsilon$, and hence the following calculation using $k$ applications of \eqref{quotefromk=1} gives \eqref{recoverep}:
\begin{align*}
\prod_{i=1}^{k}\big(1-e^{-\beta r_i}R^{e_i}\big)\mu&=\Big(\prod_{i=1}^{k}\big(1-e^{-\beta r_i}R^{e_i}\big)\Big)\Big(\prod_{i=1}^k Q_i\varepsilon\Big)\\
&=\Big(\prod_{i=1}^k\big(1-e^{-\beta r_i}R^{e_i}\big)Q_i\Big)\varepsilon=\varepsilon.
\end{align*}

To see that $\mu$ satisfies the  subinvariance  relation \eqref{subinvariance}, we take $K\subseteq \{1,\dots,k\}$ and use \eqref{quotefromk=1} again:
\begin{align*}
\prod_{i\in K}\big(1-e^{-\beta r_i}R^{e_i}\big)\mu&=\Big(\prod_{i\in K}\big(1-e^{-\beta r_i}R^{e_i}\big)\Big)\Big(\prod_{i=1}^k Q_i\varepsilon\Big)\\
&=\Big(\prod_{i\notin K}Q_i\Big)\Big(\prod_{i\in K}\big(1-e^{-\beta r_i}R^{e_i}\big)Q_i\Big)\varepsilon\\
&=\Big(\prod_{i\notin K}Q_i\Big)\varepsilon\geq 0.
\end{align*}

	To see the relation between $\mu$ and $f_\beta$, we compute using  the formula  \eqref{defR} for $R^n$ and then Tonelli's theorem:
	\begin{align*}
	\mu(Z)&=\sum_{n\in \N^k} e^{-\beta r\cdot n}(R^n\varepsilon)(Z)
	=\sum_{n\in \N^k} e^{-\beta r\cdot n}\int 1\,d(R^n\varepsilon)\\
	&=\sum_{n\in \N^k} e^{-\beta r\cdot n}\int |h^{-n}(z)|\,d\varepsilon(z)
	=\int \sum_{n\in \N^k} e^{-\beta r\cdot n}|h^{-n}(z)|\,d\varepsilon(z)\\
	&=\int f_{\beta}(z)\,d\varepsilon(z).
	\end{align*}
Thus $\mu$ is finite, and is  a probability measure if and only if $\int f_\beta\,d\varepsilon=1$. This completes the proof of part (b).
	
For (c), suppose that $\mu$ is a probability measure which satisfies the subinvariance  relation \eqref{subinvariance}. 
Set $\varepsilon=\prod_{i=1}^{k}\big(1-e^{-\beta r_i}R^{e_i}\big)\mu$. 
Then $\varepsilon$  is a bounded functional on $C(Z)$, and the subinvariance relation \eqref{subinvariance} for $\mu$ and  $K=\{1,\dots,k\}$ says that $\varepsilon$ is positive. 
	
	 To check that
	$\sum_{n\in \N^k}e^{-\beta r\cdot n}R^n\varepsilon=\mu$,	
	we calculate as in \eqref{arrangemu}:
	\begin{align*} 
	\sum_{0\leq n\leq N}e^{-\beta r\cdot n}R^n\varepsilon
	\notag&=\Big(\sum_{0\leq n\leq N}e^{-\beta r\cdot n}R^n\prod_{i=1}^{k}\big(1-e^{-\beta r_i}R^{e_i}\big)\Big)\mu\\
	\notag&=\Big(\sum_{0\leq n\leq N}\prod_{i=1}^{k}e^{-\beta r_i n_i}R^{n_ie_i}\big(1-e^{-\beta r_i}R^{e_i}\big)\Big)\mu\\
	&=\Big(\prod_{i=1}^{k}\sum_{n_i=0}^{N_i}\big(e^{-\beta r_i n_i}R^{n_ie_i}-e^{-\beta r_i(n_i+1)}R^{(n_i+1)e_i}\big)\Big)\mu.\notag
	\end{align*}
Observe that for each $i$, the sum over $n_i$ above is telescoping. Now let $N\to\infty$ in the sense that $N_i\to\infty$ for $1\leq i\leq k$. Then
\[
\Big(\lim_{N_k\to\infty}\sum_{n_i=0}^{N_i}\big(e^{-\beta r_i n_i}R^{n_ie_i}-e^{-\beta r_i(n_i+1)}R^{(n_i+1)e_i}\big)\Big)\mu=\mu.
\]
Repeating for  $i=k-1,\dots,1$ we get that $\sum_{n\in \N^k}e^{-\beta r\cdot n}R^n\varepsilon=\mu$, as needed. That $\int f_\beta\,d\varepsilon=1$ now follows from part (b). 
\end{proof}

\section{A characterisation of KMS states}\label{sec4}

Our goal is to construct KMS states on the Nica-Toeplitz algebra $\NT(X)$ of the previous sections, and we need to be able to recognise when a given state is a KMS state. We can of course verify the KMS condition on pairs of elements $b,c$ of the form $\psi_m(x)\psi_n(y)^*$, but this involves working with two products $bc$ and $cb$ of such elements. We seek a characterisation of KMS states which can be verified on a single spanning element, like those of \cite[Proposition~3.1]{AaHR} and \cite[Proposition~3.1]{aHLRS}. Proving that our characterisation works involves simplifying a product of the form $\psi_m(x)^*\psi_n(y)$ using suitably chosen Parseval frames for the Hilbert bimodules in our product (see Proposition~\ref{solelprop}). Our construction of such frames uses the $*$-commuting property.

So we assume from now on that we have a family $\{h_i:1\leq i\leq k\}$ of $*$-commuting local homeomorphisms on a compact space $Z$. We consider the product system $X$ of Proposition~\ref{prop1}. We fix $r\in (0,\infty)^k$ and define $\alpha:\R\to \Aut\NT(X)$ in terms of the gauge action by $\alpha_t=\gamma_{e^{ i t r}}$. We begin by stating our characterisation of KMS states on $(\NT(X),\alpha)$, though proving it will take the rest of the section. Notice that the hypothesis of rational independence in part (b) already featured in \cite[Proposition~3.1]{aHLRS}.

\begin{prop}\label{KMSPROP}
Suppose that  $\beta>0$ and  $\phi$ is a state on $\NT(X)$.
	\begin{itemize}
		\item[(a)] If $\phi$  satisfies
		\begin{align}\label{KMSCHARAC}
		\phi\big(\psi_m(x)\psi_n(y)^*\big)=\delta_{m,n} e^{-\beta r\cdot m}\phi\circ\psi_0(\langle y,x\rangle)
		\text{ for } x\in X_m, y\in X_n,
		\end{align}
		then  $\phi$  is a KMS$_\beta$ state of $(\NT(X),\alpha)$.
		\item[(b)] If $\phi$ is a KMS$_\beta$ state of $(\NT(X),\alpha)$ and $r\in (0,\infty)^k$ has rationally independent coordinates,
		then $\phi$ satisfies \eqref{KMSCHARAC}.
	\end{itemize}
\end{prop}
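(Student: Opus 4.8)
The plan is to verify both implications by testing them on the spanning elements $\psi_m(x)\psi_n(y)^*$ of $\NT(X)$. First observe that the gauge action gives $\alpha_t\big(\psi_m(x)\psi_n(y)^*\big)=e^{itr\cdot(m-n)}\psi_m(x)\psi_n(y)^*$, so each such element is analytic with $\alpha_z\big(\psi_m(x)\psi_n(y)^*\big)=e^{izr\cdot(m-n)}\psi_m(x)\psi_n(y)^*$; in particular $\alpha_{i\beta}\big(\psi_m(x)\psi_n(y)^*\big)=e^{-\beta r\cdot(m-n)}\psi_m(x)\psi_n(y)^*$. Finite sums of these elements are analytic and, since the product of two spanning elements reduces to a finite sum of spanning elements (via the frame simplification below), they form a dense $*$-subalgebra. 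Hence the argument of \cite[page 19]{LR} quoted above lets me check the KMS condition only on pairs of spanning elements.

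For part (a) I would take $b=\psi_m(x)\psi_n(y)^*$ and $c=\psi_p(u)\psi_q(v)^*$. Because $\alpha_{i\beta}(b)=e^{-\beta r\cdot(m-n)}b$, the KMS$_\beta$ condition for the pair $(b,c)$ reduces to the single identity $\phi(bc)=e^{-\beta r\cdot(m-n)}\phi(cb)$. To evaluate both sides I would rewrite the interior products $\psi_n(y)^*\psi_p(u)$ and $\psi_q(v)^*\psi_m(x)$ as finite sums of spanning elements $\psi_a(\cdot)\psi_b(\cdot)^*$; this is exactly the frame-based simplification of Proposition~\ref{solelprop}, and it is here that the $*$-commuting hypothesis enters, to guarantee Parseval frames adapted to $n\vee p$ that factor correctly through the product system. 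Substituting these expansions turns $bc$ and $cb$ into finite sums of spanning elements, to which \eqref{KMSCHARAC} applies term by term, so that only diagonal terms survive. Every term preserves the $\Z^k$-degree, so the surviving diagonal terms occur for both $bc$ and $cb$ precisely when the total degree $(m-n)+(p-q)$ is zero; when it is nonzero, both $\phi(bc)$ and $\phi(cb)$ vanish and the identity holds trivially.

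The main obstacle is the remaining case $(m-n)+(p-q)=0$: here I must compare the surviving diagonal contributions to $\phi(bc)$ and to $\phi(cb)$ and check that they differ by exactly the scalar $e^{-\beta r\cdot(m-n)}$. This is the genuinely computational core, carried out by writing each diagonal contribution in the inner-product form $e^{-\beta r\cdot s}\phi\big(\psi_0(\langle\cdot,\cdot\rangle)\big)$ supplied by \eqref{KMSCHARAC}, collapsing the frame sums via the reconstruction formula \eqref{reconstruction}, and tracking how the degree $s$ of the surviving terms shifts between the $bc$ and $cb$ expansions; this shift accounts precisely for the factor $e^{-\beta r\cdot(m-n)}$. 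I expect this bookkeeping, resting on Proposition~\ref{solelprop}, to occupy the bulk of the work.

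Part (b) is more direct. First I would dispose of the off-diagonal terms: every KMS$_\beta$ state is $\alpha$-invariant, so $\phi\big(\psi_m(x)\psi_n(y)^*\big)=\phi\big(\alpha_t(\psi_m(x)\psi_n(y)^*)\big)=e^{itr\cdot(m-n)}\phi\big(\psi_m(x)\psi_n(y)^*\big)$ for all $t\in\R$; if the left-hand side were nonzero this would force $r\cdot(m-n)=0$, and since the coordinates of $r$ are rationally independent and $m-n\in\Z^k$, this yields $m=n$. For the diagonal case $m=n$ I would apply the KMS condition directly to the analytic pair $\psi_m(x)$ and $\psi_m(y)^*$: using $\alpha_{i\beta}(\psi_m(x))=e^{-\beta r\cdot m}\psi_m(x)$ together with the Toeplitz relation (T2), namely $\psi_m(y)^*\psi_m(x)=\psi_0(\langle y,x\rangle)$, gives $\phi\big(\psi_m(x)\psi_m(y)^*\big)=e^{-\beta r\cdot m}\phi\big(\psi_m(y)^*\psi_m(x)\big)=e^{-\beta r\cdot m}\phi\big(\psi_0(\langle y,x\rangle)\big)$, which is precisely \eqref{KMSCHARAC}.
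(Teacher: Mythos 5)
Your proposal is correct and follows essentially the same route as the paper's proof: reduce the KMS condition to pairs of spanning elements via the analyticity/density argument of \cite{LR}, rewrite the interior products $\psi_n(y)^*\psi_p(s)$ and $\psi_q(t)^*\psi_m(x)$ using the $*$-commuting Parseval-frame formula of Proposition~\ref{solelprop}, apply \eqref{KMSCHARAC} term by term so that both $\phi(bc)$ and $\phi(cb)$ vanish unless $m+p=n+q$, and extract the factor $e^{-\beta r\cdot(m-n)}$ from the degree shift between the two expansions --- the remaining equality of the inner-product parts is exactly the paper's pointwise $\dag=\ddag$ computation, which the paper makes tractable by first reducing to the case $n\wedge p=0$ (so that Lemma~\ref{ADAD} gives $Q=p$ and $M=n$), a reduction your sketch glosses over but which your frame-and-reconstruction plan would in effect have to reproduce. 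Part (b) also matches the paper, except that you kill the off-diagonal terms by $\alpha$-invariance of KMS states, whereas the paper gets the same conclusion self-containedly by applying the KMS condition twice to obtain $\phi\big(\psi_m(x)\psi_n(y)^*\big)=e^{-\beta r\cdot(m-n)}\phi\big(\psi_m(x)\psi_n(y)^*\big)$; both arguments are valid.
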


We now describe the Parseval frames that we will use. As usual, they have the form $\tau_i=\sqrt{\rho_i}$ for some carefully chosen partition $\{\rho_i\}$ of unity of $Z$.

\begin{lemma}\label{pframe}
	Let $f,g$ be $*$-commuting local homeomorphisms on  $Z$ giving graph correspondences $X(E_1)$, $X(E_2)$. Let  $\{\rho_i:1\leq i\leq d\}$ be a  partition of unity  such that $f$ and $g$ are one-to-one on each $\supp \rho_i$, and take $\tau_i:=\sqrt{\rho_i}$. Then
	\begin{itemize}
		\item[(a)] $\{\tau_i\}$,$\{\tau_i\circ g\}$ are  Parseval frames for $X(E_1)$;
		\item[(b)] 	$\{\tau_i\}$,$\{\tau_i\circ f\}$ are Parseval frames for $X(E_2)$; and
		\item[(c)] 	 there is an isomorphism
		$t_{f,g}:X(E_1)\otimes_A X(E_2)\rightarrow X(E_2)\otimes_A X(E_1)$ such that
		\begin{align}\label{flipformula}
		t_{f,g}(\tau_i\circ g\otimes \tau_j)=\tau_j\circ f\otimes\tau_i \text{ for } 1\leq i,j\leq d.
		\end{align}
		We call this  isomorphism the  flip map.
	\end{itemize}	
\end{lemma}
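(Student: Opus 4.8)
My plan is to first establish (a) and (b) by a direct computation with the reconstruction formula \eqref{reconstruction}, and then build the flip map in (c) from the Parseval frames supplied by (a) and (b). For part (a), I would fix $x\in X(E_1)=C(Z)$ and compute $\sum_i \tau_i\cdot\langle\tau_i,x\rangle$ pointwise. Using the inner-product formula for the correspondence associated to $f$, namely $\langle\tau_i,x\rangle(z)=\sum_{f(w)=z}\overline{\tau_i(w)}x(w)$, and then the left/right actions, the key point is that $f$ is one-to-one on $\supp\rho_i=\supp\tau_i$, so the inner sum collapses to the single term $w=z$ whenever $z\in\supp\tau_i$. This forces $\big(\sum_i\tau_i\cdot\langle\tau_i,x\rangle\big)(z)=x(z)\sum_i|\tau_i(z)|^2=x(z)$, since $\sum_i\rho_i\equiv 1$; this is the same style of calculation already carried out for the compactness of the left action in the proof of Proposition~\ref{prop1}. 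For the frame $\{\tau_i\circ g\}$, the same computation goes through because $g$ is also one-to-one on each $\supp\rho_i$, so $\tau_i\circ g$ is supported where $\tau_i$ is, and $\sum_i|(\tau_i\circ g)(z)|^2=\sum_i\rho_i(g(z))=1$. Part (b) is identical with the roles of $f$ and $g$ interchanged, using the inner product coming from $g$.

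For part (c), the strategy is to define $t_{f,g}$ on a spanning set via \eqref{flipformula} and then check it extends to an isomorphism. By (a) the set $\{\tau_i\circ g\}$ is a Parseval frame for $X(E_1)$ and by (b) $\{\tau_j\}$ is a Parseval frame for $X(E_2)$, so the elementary tensors $\{(\tau_i\circ g)\otimes\tau_j\}$ span a dense subspace of $X(E_1)\otimes_A X(E_2)$ (the reconstruction formulas for each factor let one write an arbitrary elementary tensor in terms of these, and hence span the whole balanced tensor product). Symmetrically, $\{\tau_j\circ f\}$ is a Parseval frame for $X(E_2)$ and $\{\tau_i\}$ for $X(E_1)$, so $\{(\tau_j\circ f)\otimes\tau_i\}$ spans a dense subspace of $X(E_2)\otimes_A X(E_1)$. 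Thus the prescription \eqref{flipformula} at least has a chance of defining a map between dense subspaces whose images span the target. I would then verify that $t_{f,g}$ preserves the $A$-valued inner products on elementary tensors, using \eqref{balinnerproduct}; once it is shown to be a well-defined isometry with dense range, it extends to the desired isomorphism.

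The main obstacle, and where the $*$-commuting hypothesis is essential, is checking that $t_{f,g}$ is \emph{well defined} and \emph{inner-product preserving} on the balanced tensor product. The formula \eqref{flipformula} is prescribed only on the specific frame tensors, so I must confirm the inner products $\langle(\tau_i\circ g)\otimes\tau_j,(\tau_{i'}\circ g)\otimes\tau_{j'}\rangle$ and $\langle\tau_j\circ f\otimes\tau_i,\tau_{j'}\circ f\otimes\tau_{i'}\rangle$ agree. Expanding both via \eqref{balinnerproduct}, one side reduces to an inner product in $X(E_2)$ of the form $\langle\tau_j,\varphi\big(\langle\tau_i\circ g,\tau_{i'}\circ g\rangle\big)\tau_{j'}\rangle$ and the other to the analogous expression with $f$ and $g$ swapped; each in turn becomes a sum over preimages, weighted by the partition of unity. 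Reconciling the two sums amounts to matching solutions of $f(\cdot)=g(\cdot)$-type equations, and this is precisely what the unique-lift property in the definition of $*$-commuting maps guarantees: given $w$ with $f(w)=z$ and $v$ with $g(v)=z$ lying over a common point, the $*$-commuting square produces a unique common preimage, setting up a bijection between the index sets of the two sums. Once this bijection is in place the two inner products are visibly equal, so $t_{f,g}$ is a well-defined isometry; density of the frame tensors on both sides then upgrades it to an isomorphism. I expect this combinatorial bookkeeping of preimages—rather than any analytic subtlety—to be the crux of the argument.
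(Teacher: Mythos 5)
Your part (a) contains the real gap. For the frame $\{\tau_i\circ g\}$ in $X(E_1)$ you assert that ``the same computation goes through because $g$ is also one-to-one on each $\supp\rho_i$, so $\tau_i\circ g$ is supported where $\tau_i$ is.'' Both halves of this are wrong: $\supp(\tau_i\circ g)=g^{-1}(\supp\tau_i)$, which is in general much larger than $\supp\tau_i$, and $f$ need not be injective on it. Writing out the reconstruction sum, the value of $\sum_i(\tau_i\circ g)\cdot\langle\tau_i\circ g,x\rangle$ at $z$ is
\[
\sum_{i=1}^{d} \tau_i(g(z))\sum_{f(w)=f(z)}\overline{\tau_i(g(w))}\,x(w),
\]
and the inner sum collapses to the single term $w=z$ only if one can show that $f(w)=f(z)$ together with $g(w),g(z)\in\supp\tau_i$ forces $w=z$. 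This is exactly where the paper uses the hypothesis you set aside: commutativity gives $f(g(w))=g(f(w))=g(f(z))=f(g(z))$, injectivity of $f$ on $\supp\tau_i$ then gives $g(w)=g(z)$, and finally the \emph{uniqueness} clause in the definition of $*$-commuting maps (applied to the square with $f(w)=f(z)$ and $g(w)=g(z)$) gives $w=z$. Without $*$-commutativity the claim is simply false: take $f=g$ a two-to-one covering map (commuting maps that do not $*$-commute). Then any $w\neq z$ with $f(w)=f(z)$ automatically satisfies $g(w)=g(z)$, the preimage sum does not collapse (it produces $\sum_{f(w)=f(z)}x(w)$ rather than $x(z)$), and $\{\tau_i\circ g\}$ is not a Parseval frame for $X(E_1)$. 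So your statement that the $*$-commuting hypothesis is essential only in part (c) has the logic of the lemma backwards: it is the crux of parts (a) and (b).

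By contrast, part (c) needs no frame-by-frame bookkeeping at all, and your plan there, while workable in principle, is much heavier than necessary. Since $f$ and $g$ commute, Lemma~\ref{coresiso} provides isomorphisms $\sigma_{f,g}:X(E_1)\otimes_A X(E_2)\to X(F)$ and $\sigma_{g,f}:X(E_2)\otimes_A X(E_1)\to X(F)$ onto the \emph{same} correspondence $X(F)$ with $F=(Z,Z,\id,g\circ f)=(Z,Z,\id,f\circ g)$; the paper simply sets $t_{f,g}:=\sigma_{g,f}^{-1}\circ\sigma_{f,g}$, and \eqref{flipformula} follows from the one-line pointwise identity $\sigma_{f,g}(\tau_i\circ g\otimes\tau_j)(z)=\tau_i(g(z))\,\tau_j(f(z))=\sigma_{g,f}(\tau_j\circ f\otimes\tau_i)(z)$. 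If you insist on building the flip directly from the frames, you must prove well-definedness via inner-product preservation on the frame tensors (note the index swap $(i,j)\mapsto(j,i)$), and that verification runs through the very same $*$-commuting collapse argument you omitted in (a) — so fixing (a) is unavoidable either way.
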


\begin{proof}
Part (b) follows from (a), so we only prove (a) and (c). It is easy to check  that
	$\{\tau_i\}$ is  a Parseval frame for $X(E_1)$ (see, for example, \cite[Proposition~8.2]{EV}).  To see that  $\{\tau_i\circ g\}$ is
	a Parseval frame for $X(E_1)$, we take $x\in X(E_1)$ and check the reconstruction formula. For $z\in Z$, we have
	\begin{align}\label{UFF}
	\sum_{i=1}^{d} (\tau_i\circ g)\cdot\big\langle (\tau_i\circ g),x\big\rangle(z)
	&=\sum_{i=1}^{d} \tau_i\big( g(z)\big)\Big(\sum_{ f(w)=f(z)}\overline{\tau_i(g(w)) }x(w)\Big).
	\end{align}
The  $i$-summand vanishes  unless both $g(z),g(w)$ are in $\supp{\tau_i}$, so we suppose that  $g(z),g(w)\in \supp{\tau_i}$. For $f(w)=f(z)$, we have
	 $g\circ f(z)=g\circ f(w)$ and hence $f\circ g(z)=f\circ g(w)$; since $f$ is one-to-one on  $\supp{\tau_i}$, we have $g(w)=g(z)$. Thus both $w$ and $z$ fit in the box in the diagram

	\begin{tikzpicture}[scale=0.2]
	\node  at (0,0) {};
	\node (a) at (35,20) {$\Box$};
	\node (b) at (25,10) {$f(z)$};
	\node (c) at (45,10) {$g(z)$};
	\node (d) at (35,0) {$g(f(z))=f(g(z))$};
	\draw[-latex, black](a) to node[pos=0.4,left] {$f$} (b);
	\draw[-latex, black](c) to node[pos=0.4,left] {$f$} (d);
	\draw[-latex, black ](b) to node[pos=0.4,right] {$g$} (d);
	\draw[-latex, black](a) to node[pos=0.4,right] {$g$} (c);
	\end{tikzpicture}
		
\noindent		and $*$-commutativity of $f$ and $g$  implies that $w=z$.
	Thus  the interior sum in \eqref{UFF} collapses to $\overline{(\tau_i\circ g)(z)}x(z)$, and we recover the reconstruction formula  
	\[\sum_{i=1}^{d} (\tau_i\circ g)\cdot \langle (\tau_i\circ g),x\rangle(z)=\sum_{i=1}^{d}\overline{\tau_i(g(z))} \tau_i(g(z))x(z)
	=x(z)\sum_{i=1}^{d} \tau_i(g(z))^2
	=x(z).\]
	
	For part (c), we take isomorphisms $\sigma_{f,g}$ and $\sigma_{g,f}$ as in Lemma~\ref{coresiso}, and  set $t_{f,g}:=\sigma_{g,f}^{-1}\,\circ\, \sigma_{f,g}$, which is  an isomorphism of  $X(E_1)\otimes_A X(E_2)$ onto $X(E_2)\otimes_A X(E_1)$.
	To check \eqref{flipformula}, note that
	\begin{align*}
	\sigma_{f,g}(\tau_i\circ g\otimes \tau_j)=\sigma_{g,f}(\tau_j\circ f\otimes\tau_i), 
	\end{align*}
and hence
\[t_{f,g}(\tau_i\circ g\otimes \tau_j)=\sigma_{g,f}^{-1}\,\circ\, \sigma_{f,g}(\tau_i\circ g\otimes \tau_j)=\tau_j\circ f\otimes\tau_i.	
\qedhere\]
\end{proof}

To study the KMS sates, similar  results in the literature (for example \cite[Proposition~3.1]{aHLRS} and \cite[Theorem 4.6]{HLS}) show that it  is  crucial  to express elements of the form $\psi_{n}(y)^*\psi_{m}(x)$ in terms of usual spanning elements $\psi_{p}(s)\psi_{q}(t)^*$ of  the algebra $\NT(X)$.  For a general product system over a semigroup,  Fowler provided an approximation \cite[Proposition~5.10]{Fo}, but this is not enough because we need an exact formula; in the
dynamics associated to a $k$-graph \cite{aHLRS} this formula already exists as one of the Toeplitz-Cuntz-Krieger relations; in \cite{HLS}, since each fibre in the product system has an orthonormal basis, it is easier to find such a formula (see \cite[Lemma~4.7]{HLS}). The next proposition provides such a formula for our product system. 
\begin{prop}\label{solelprop}
 Take $m,n\in \N^k$ such that $m\wedge n=0$.  Let  $\{\rho_i:1\leq i\leq d\}$ be a  partition of unity  such that $h^m$ and $h^n$ are one-to-one on $\supp \rho_i$ and take  $\tau_i:=\sqrt{\rho_i}$.
\begin{itemize}	
\item[(a)] Let $\sigma_{m,n}:X_m\otimes_A X_n\rightarrow X_{m+n}$ be the isomorphism induced by the  multiplication in $X$, and similarly for $\sigma_{n,m}$. Then for $x\in X_m$ and $y\in X_n$, we have
\begin{align}\label{HELPFORMULA}
\sigma_{m,n}(x\otimes y)=\sum_{i,j=1}^d\sigma_{n,m}\big(\tau_j\circ h^m \otimes \tau_i \big)\cdot \big\langle \langle x,\tau_i\circ h^n \rangle\cdot\tau_j\, ,\, y\big\rangle.
\end{align}
\item[(b)] For $x\in X_m$ and $y\in X_n$,  we  have
	\begin{align}\label{FORMULA}
	\psi_{n}(y)^*\psi_{m}(x)= \sum_{i,j=1}^d\psi_m\big(\langle y,\tau_j\circ h^m \rangle\cdot\tau_i\big)\psi_n\big(\langle x,\tau_i\circ h^n \rangle\cdot\tau_j\big)^*.
	\end{align}
\end{itemize}	
\end{prop}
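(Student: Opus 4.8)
My plan is to prove (a) by transforming the right-hand side of \eqref{HELPFORMULA} back into $\sigma_{m,n}(x\otimes y)$ using the frame machinery of Lemma~\ref{pframe}, and then to deduce (b) by combining Nica covariance with the same flip identity. Since $m\wedge n=0$, the local homeomorphisms $h^m$ and $h^n$ $*$-commute, so Lemma~\ref{pframe} applies to the pair $f=h^m$, $g=h^n$. The organising observation is the flip identity recorded inside the proof of Lemma~\ref{pframe}(c), namely $\sigma_{n,m}(\tau_j\circ h^m\otimes\tau_i)=\sigma_{m,n}(\tau_i\circ h^n\otimes\tau_j)$, which lets me replace $\sigma_{n,m}$ by $\sigma_{m,n}$ in each summand.

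For part (a), I would then move the coefficient $\langle\langle x,\tau_i\circ h^n\rangle\cdot\tau_j,y\rangle$ inside the tensor. Using adjointability of the left action on $X_n$, this coefficient equals $\langle\tau_j,\langle\tau_i\circ h^n,x\rangle\cdot y\rangle$, so summing over $j$ and invoking the reconstruction formula for the Parseval frame $\{\tau_j\}$ of $X_n$ (Lemma~\ref{pframe}(b)) collapses the $j$-sum to $\sigma_{m,n}\big(\tau_i\circ h^n\otimes\langle\tau_i\circ h^n,x\rangle\cdot y\big)$. Balancing the $A$-coefficient across the tensor rewrites this as $\sigma_{m,n}\big((\tau_i\circ h^n)\cdot\langle\tau_i\circ h^n,x\rangle\otimes y\big)$, and a second reconstruction formula, now for the Parseval frame $\{\tau_i\circ h^n\}$ of $X_m$ (Lemma~\ref{pframe}(a)), collapses the $i$-sum to $\sigma_{m,n}(x\otimes y)$, which is exactly \eqref{HELPFORMULA}.

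For part (b) I would not push $\psi$ through \eqref{HELPFORMULA} directly, since that produces $\psi_m(x)\psi_n(y)$ rather than the adjoint product; instead I argue as follows. Because $\psi^{(n)}(1_n)$ fixes $\psi_n(X_n)$ and $\psi^{(m)}(1_m)$ fixes $\psi_m(X_m)$, we have $\psi_n(y)^*\psi_m(x)=\psi_n(y)^*\psi^{(n)}(1_n)\psi^{(m)}(1_m)\psi_m(x)$; and since $m\wedge n=0$ forces $m\vee n=m+n$, Nica covariance \eqref{nicacov-using-lemma} gives $\psi^{(n)}(1_n)\psi^{(m)}(1_m)=\psi^{(m+n)}(1_{m+n})$. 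The elements $w_{ij}:=\sigma_{n,m}(\tau_j\circ h^m\otimes\tau_i)$ form a Parseval frame for $X_{m+n}$, being the image under $\sigma_{n,m}$ of the product of the Parseval frames $\{\tau_j\circ h^m\}$ for $X_n$ and $\{\tau_i\}$ for $X_m$ (a routine two-step reconstruction check), so Lemma~\ref{nicaeasy} expands $\psi^{(m+n)}(1_{m+n})=\sum_{ij}\psi_{m+n}(w_{ij})\psi_{m+n}(w_{ij})^*$.

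The crux is then to simplify each end using the two factorisations of $w_{ij}$ supplied by the flip identity, namely $\psi_{m+n}(w_{ij})=\psi_n(\tau_j\circ h^m)\psi_m(\tau_i)=\psi_m(\tau_i\circ h^n)\psi_n(\tau_j)$ (by (T3)). The first factorisation with (T2) and the left action gives $\psi_n(y)^*\psi_{m+n}(w_{ij})=\psi_0(\langle y,\tau_j\circ h^m\rangle)\psi_m(\tau_i)=\psi_m(\langle y,\tau_j\circ h^m\rangle\cdot\tau_i)$, while the second gives $\psi_{m+n}(w_{ij})^*\psi_m(x)=\psi_n(\tau_j)^*\psi_0(\langle\tau_i\circ h^n,x\rangle)=\psi_n(\langle x,\tau_i\circ h^n\rangle\cdot\tau_j)^*$; multiplying and summing over $i,j$ yields \eqref{FORMULA}. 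I expect the main obstacle to be the bookkeeping around the $*$-commuting hypothesis rather than any single hard estimate: it is precisely $m\wedge n=0$ that makes $m\vee n=m+n$, so that Nica covariance produces $\psi^{(m+n)}(1_{m+n})$ rather than a projection onto a strictly smaller fibre, and it is $*$-commutativity, through Lemma~\ref{pframe}, that both furnishes the Parseval frame $\{\tau_i\circ h^n\}$ of $X_m$ used in (a) and lets the single frame element $w_{ij}$ factor in the two different orders needed to collapse both ends of the product in (b).
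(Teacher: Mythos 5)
Your proof is correct in both parts, but part (b) takes a genuinely different route from the paper. In part (a) you use precisely the paper's ingredients---the flip identity $\sigma_{m,n}(\tau_i\circ h^n\otimes\tau_j)=\sigma_{n,m}(\tau_j\circ h^m\otimes\tau_i)$, balancing of tensors, and the reconstruction formulas for the Parseval frames $\{\tau_i\circ h^n\}$ of $X_m$ and $\{\tau_j\}$ of $X_n$---but run from the right-hand side of \eqref{HELPFORMULA} back to $\sigma_{m,n}(x\otimes y)$, with your adjointability identity $\big\langle\langle x,\tau_i\circ h^n\rangle\cdot\tau_j,y\big\rangle=\big\langle\tau_j,\langle\tau_i\circ h^n,x\rangle\cdot y\big\rangle$ standing in for the paper's pointwise verification of its claim \eqref{quick}; this is the same argument up to reorganisation. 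Part (b) is where you diverge: the paper proves \eqref{FORMULA} by checking the corresponding identity \eqref{FORMULAFOCK} summand-by-summand on the Fock module (using part (a) and the adjoint formula \eqref{fockadjoint}) and then appealing to injectivity of $T_*$, whereas you insert $\psi^{(n)}(1_n)\psi^{(m)}(1_m)=\psi^{(m+n)}(1_{m+n})$ between $\psi_n(y)^*$ and $\psi_m(x)$ (legitimate: $m\wedge n=0$ gives $m\vee n=m+n$, each identity operator is compact because the fibres have finite Parseval frames, and $\iota_n^{m+n}(1_n)=1_{m+n}=\iota_m^{m+n}(1_m)$, so the $C^*$-algebraic Nica covariance of $\psi$ applies), expand $\psi^{(m+n)}(1_{m+n})=\sum_{i,j}\psi_{m+n}(w_{ij})\psi_{m+n}(w_{ij})^*$ over the product frame $w_{ij}=\sigma_{n,m}(\tau_j\circ h^m\otimes\tau_i)$, and collapse both ends using the two factorisations of $w_{ij}$ supplied by the flip identity. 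Your route is shorter, needs neither part (a) nor the faithfulness of the Fock representation (which the paper imports from \cite{HLS}), and establishes \eqref{FORMULA} in every Nica covariant representation in a $C^*$-algebra at once; it also makes transparent that Nica covariance is the real engine behind the formula, consistent with Solel's equivalence between the doubly commuting relation and Nica covariance discussed in the paper's appendix. What the paper's Fock-space route buys in exchange is independence from the $C^*$-algebraic formulation of Nica covariance: it relies only on concrete Hilbert-module computations in $F(X)$ together with injectivity of $T_*$, and it reuses part (a) directly.
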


\begin{proof}
	For part (a), since $m\wedge n=0$, $h^m$ and $h^n$ are $*$-commuting. Then Lemma~\ref{pframe} implies that $\{\tau_i\circ h^n\}$ and $\{\tau_j\}$ are Parseval frames for $X_m$ and $X_n$. The formula for multiplication in $X$ implies that
\begin{align}\label{tazetaze}
\sigma_{m,n}(\tau_i\circ h^n\otimes \tau_j)=\sigma_{n,m}(\tau_j\circ h^m\otimes\tau_i).
\end{align}
To prove \eqref{HELPFORMULA}, we write $x\otimes y$ in terms of  the elements $\{\tau_i\circ h^n\otimes  \tau_j\}$.
The reconstruction formulas for the Parseval frames $\{\tau_i\circ h^n\}$ and $\{\tau_j\}$ give
\begin{align*}
x\otimes y
&=\Big(\sum_{i=1}^d\tau_i\circ h^n\cdot \big\langle\tau_i\circ h^n,x\big\rangle\Big)\otimes \Big(\sum_{j=1}^d\tau_j\cdot \langle\tau_j,y\rangle\Big).
\end{align*}
Since the tensors are balanced, we have
\begin{align}\label{taze}
x\otimes y
&= \sum_{i,j=1}^d \Big(\tau_i\circ h^n\otimes\langle\tau_i\circ h^n,x\rangle\cdot \tau_j\cdot \langle\tau_j,y\rangle\Big).
\end{align}

We then claim that
\begin{align}\label{quick}
\big\langle\tau_i\circ h^n,x\big\rangle\cdot\tau_j\cdot \langle\tau_j,y\rangle=\tau_j\cdot  \big\langle \langle x,\tau_i\circ h^n \rangle\cdot\tau_j\, ,\, y\big\rangle.
\end{align}
To justify the claim, we evaluate both sides of \eqref{quick} on $z\in Z$. For the left-hand side,  a computation  in the fibre $X_n$ shows that
\begin{align*}
\big(\langle\tau_i\circ h^n,x&\rangle\cdot\tau_j\cdot \langle\tau_j,y\rangle\big)(z)=	\langle\tau_i\circ h^n,x\rangle(z)\tau_j(z) \langle\tau_j,y\rangle(h^n(z))\\
&=\langle\tau_i\circ h^n,x\rangle(z)\tau_j(z)\sum_{h^n(w)=h^n(z)}\overline{\tau_j(w)} y(w)\\
&=\langle\tau_i\circ h^n,x\rangle(z)\tau_j(z)\overline{\tau_j(z)} y(z)\qquad (h^n\text{ is injective on } \supp \tau_j).
\end{align*}
A similar calculation for  the right-hand side of \eqref{quick} gives
\begin{align*}
\big(\tau_j\cdot  \big\langle \langle x,\tau_i\circ &h^n \rangle\cdot\tau_j\, ,\, y\big\rangle\big)(z)=	\tau_j(z) \big\langle \langle x,\tau_i\circ h^n \rangle\cdot\tau_j\, ,\, y\big\rangle(h^n(z))\\
&=	\tau_j(z) \sum_{h^n(w)=h^n(z)}\overline{\langle x,\tau_i\circ h^n \rangle(w)\tau_j(w)} y(w)\\
&=	\tau_j(z)\langle \tau_i\circ h^n,x \rangle(z)\overline{\tau_j(z)} y(z),
\end{align*}
and we have proved the claim.

Now putting \eqref{quick} in \eqref{taze} gives
\begin{align*}
x\otimes y
&= \sum_{i,j=1}^d \big(\tau_i\circ h^n\otimes\tau_j\cdot  \big\langle \langle x,\tau_i\circ h^n \rangle\cdot\tau_j\, ,\, y\big\rangle\big).
\end{align*}
Since $\sigma_{m,n}$ is an isomorphism of correspondences, we have
\begin{align*}
\sigma_{m,n}(x\otimes y)
&= \sum_{i,j=1}^d \sigma_{m,n}(\tau_i\circ h^n\otimes\tau_j)\cdot  \big\langle \langle x,\tau_i\circ h^n \rangle\cdot\tau_j\, ,\, y\big\rangle,
\end{align*}
and applying \eqref{tazetaze} completes the proof of \eqref{HELPFORMULA}.

	For part (b),	we use  the Fock representation $T$ of $X$, and
	it suffices for us to prove that
	\begin{align}\label{FORMULAFOCK}
	T_{n}(y)^*T_{m}(x)= \sum_{i,j=1}^dT_m\big(\langle y,\tau_j\circ h^m \rangle\cdot\tau_i\big)T_n\big(\langle x,\tau_i\circ h^n \rangle\cdot\tau_j\big)^*.
	\end{align}

	To do this, we show that both sides of \eqref{FORMULAFOCK} agree on   each  summand $X_p$ of the Fock module. Let $p\in \N^k, s\in X_p$.
The formula  \eqref{fockadjoint} for the adjoint  shows that the right-hand side of \eqref{FORMULAFOCK} vanishes unless $p\geq n$. The
	left-hand side satisfies
	$\big(T_{n}(y)^*T_{m}(x)\big)(s)=T_{n}(y)^*\big(\sigma_{m,p}(x\otimes s)\big)$, which vanishes unless $m+p\geq n$. Since $m\wedge n=0$, $m+p\geq n$ is equivalent to $p\geq n$. Thus both sides of \eqref{FORMULAFOCK} are zero unless  $p\geq n$.    So  we assume $p\geq n$.
	
	It suffices to check \eqref{FORMULAFOCK} on  $s=\sigma_{n,p-n}(s'\otimes s'')$ where $s'\otimes s''\in X_n\odot_A X_{p-n}$. We first compute
	the right-hand side of \eqref{FORMULAFOCK} using \eqref{fockadjoint}:
	\begin{align}\label{FOCKRIGHT}
	\sum_{i,j=1}^{ d}T_m\big(\langle y,\notag&\tau_j\circ h^m \rangle\cdot\tau_i\big)T_n\big(\langle x,\tau_i\circ h^n \rangle\cdot\tau_j\big)^*(\sigma_{n,p-n}(s'\otimes s''))\\
	\notag&=\sum_{i,j=1}^{ d}T_m\big(\langle y,\tau_j\circ h^m \rangle\cdot\tau_i\big)\big(\big\langle \langle x,\tau_i\circ h^n \rangle\cdot\tau_j\, ,\, s'\big\rangle\cdot s''\big)\\
	&=\sum_{i,j=1}^{ d}\sigma_{m,p-n}\big(\langle y,\tau_j\circ h^m \rangle\cdot\tau_i\otimes \big\langle \langle x,\tau_i\circ h^n \rangle\cdot\tau_j\, ,\, s'\big\rangle\cdot s''\big).
	\end{align}	
For the left-hand side of \eqref{FORMULAFOCK}, we evaluate $\dagger:=T_{n}(y)^*T_{m}(x)\big(\sigma_{n,p-n}(s'\otimes s'')\big)$.  We start by applying the definition of the Fock representation. Then
\begin{align*}
\dagger&=T_{n}(y)^*\big(\sigma_{m,p}\big(x\otimes \sigma_{n,p-n}(s'\otimes s'')\big)\big)\\
&=T_{n}(y)^*\big(\sigma_{m+n,p-n}\big(\sigma_{m,n}(x\otimes s')\otimes s''\big)\big).
\end{align*}
Part (a) gives
\begin{align*}
\sigma_{m,n}(x\otimes s')\otimes s''&= \sum_{i,j=1}^{d}\big(\sigma_{n,m}(\tau_j\circ h^m \otimes \tau_i )\cdot \big\langle \langle x,\tau_i\circ h^n \rangle\cdot\tau_j,s'\big\rangle\big)\otimes s''\\
&= \sum_{i,j=1}^{d}\sigma_{n,m}(\tau_j\circ h^m \otimes \tau_i )\otimes \big\langle \langle x,\tau_i\circ h^n \rangle\cdot\tau_j\, ,\, s'\big\rangle\cdot s'',
\end{align*}
and associativity of the multiplication in $X$ gives
\begin{align*}	
\dagger
\notag&= \sum_{i,j=1}^{d} T_{n}(y)^*\big(\sigma_{n,m+p-n}\big(\tau_j\circ h^m \otimes \sigma_{m,p-n}\big(\tau_i \otimes \big\langle \langle x,\tau_i\circ h^n \rangle\cdot\tau_j\, ,\, s'\big\rangle\cdot s''\big)\big)\\
&= \sum_{i,j=1}^{d} \big\langle y, \tau_j\circ h^m\big\rangle \cdot \sigma_{m,p-n}\big(\tau_i \otimes \big\langle \langle x,\tau_i\circ h^n \rangle\cdot\tau_j\, ,\, s'\big\rangle\cdot s''\big)\\
&= \sum_{i,j=1}^{d}  \sigma_{m,p-n}\big(\langle y, \tau_j\circ h^m\rangle \cdot\tau_i \otimes \big\langle \langle x,\tau_i\circ h^n \rangle\cdot\tau_j\, ,\, s'\big\rangle\cdot s''\big),
		\end{align*}	
which is \eqref{FOCKRIGHT}. Thus we have \eqref{FORMULAFOCK}, and the  injectivity of $T_*$ gives \eqref{FORMULA}.	 
\end{proof}

\begin{lemma}\label{application}
 Suppose that $x\in X_m$, $y\in X_n$, $s\in X_p$, and $t\in X_q$. Then  there exist $\{\xi_{i,j}:1\leq i,j\leq d\}\subset X_{m+p-n\wedge p}$ and $\{\eta_{i,j}:1\leq i,j\leq d\}\subset X_{n+q-n\wedge p}$ such that
	\begin{align}\label{blabla}
	\psi_m(x)\psi_n(y)^*\psi_p(s)\psi_q(t)^*
	=\sum_{i,j=1}^{d}\psi_{m+p-n\wedge p}(\xi_{i,j})\psi_{n+q-n\wedge p}(\eta_{i,j})^*.
	\end{align}
\end{lemma}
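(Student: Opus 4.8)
The plan is to isolate the ``wrong-way'' middle factor $\psi_n(y)^*\psi_p(s)$, rewrite it as a finite sum of spanning elements of the form $\psi_{p'}(\cdot)\psi_{n'}(\cdot)^*$ for suitable degrees $n',p'$, and then reabsorb $\psi_m(x)$ on the left and $\psi_q(t)^*$ on the right using (T3). Write $\ell:=n\wedge p$, $n':=n-\ell$ and $p':=p-\ell$. A coordinatewise check shows $n'\wedge p'=0$, so $h^{n'}$ and $h^{p'}$ $*$-commute (this is exactly where the $*$-commuting hypothesis is used, via the remark on $*$-commuting local homeomorphisms). Hence I may choose a partition of unity $\{\rho_i:1\le i\le d\}$ with $h^{n'}$ and $h^{p'}$ one-to-one on each $\supp\rho_i$, and set $\tau_i:=\sqrt{\rho_i}$; these are precisely the hypotheses of Proposition~\ref{solelprop}.

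First I would reduce to the case where $y$ and $s$ are elementary tensors, say $y=\sigma_{\ell,n'}(y'\otimes y'')$ and $s=\sigma_{\ell,p'}(s'\otimes s'')$ with $y',s'\in X_\ell$, $y''\in X_{n'}$ and $s''\in X_{p'}$ (the general case following by linearity, as in the proof of Lemma~\ref{lemmapositive}). Splitting the products with (T3), collapsing the common $X_\ell$-factor with (T2), and then using the left action, I compute
\begin{align*}
\psi_n(y)^*\psi_p(s)
&=\psi_{n'}(y'')^*\psi_\ell(y')^*\psi_\ell(s')\psi_{p'}(s'')\\
&=\psi_{n'}(y'')^*\psi_0(\langle y',s'\rangle)\psi_{p'}(s'')
=\psi_{n'}(y'')^*\psi_{p'}(\langle y',s'\rangle\cdot s'').
\end{align*}
Since $n'\wedge p'=0$, Proposition~\ref{solelprop}(b) now applies, with the roles of $m,n$ there played by $p',n'$, and rewrites the right-hand side as $\sum_{i,j=1}^d\psi_{p'}(a_{ij})\psi_{n'}(b_{ij})^*$, where $a_{ij}=\langle y'',\tau_j\circ h^{p'}\rangle\cdot\tau_i\in X_{p'}$ and $b_{ij}=\langle\langle y',s'\rangle\cdot s'',\tau_i\circ h^{n'}\rangle\cdot\tau_j\in X_{n'}$.

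Finally I would reattach the outer factors by two applications of (T3): $\psi_m(x)\psi_{p'}(a_{ij})=\psi_{m+p'}(\sigma_{m,p'}(x\otimes a_{ij}))$ and $\psi_{n'}(b_{ij})^*\psi_q(t)^*=(\psi_q(t)\psi_{n'}(b_{ij}))^*=\psi_{q+n'}(\sigma_{q,n'}(t\otimes b_{ij}))^*$. Setting $\xi_{ij}:=\sigma_{m,p'}(x\otimes a_{ij})$ and $\eta_{ij}:=\sigma_{q,n'}(t\otimes b_{ij})$ produces elements of $X_{m+p'}=X_{m+p-n\wedge p}$ and $X_{q+n'}=X_{n+q-n\wedge p}$ respectively, which is exactly \eqref{blabla}. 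The one genuinely delicate point is the opening manipulation: one must peel off precisely the common block $\ell=n\wedge p$, so that the residual degrees satisfy $n'\wedge p'=0$ and the $*$-commuting machinery of Proposition~\ref{solelprop} becomes available — note that $h^n$ and $h^p$ themselves need not $*$-commute. Everything after that is routine bookkeeping with fibre degrees.
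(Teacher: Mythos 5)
Your proof is correct and takes essentially the same route as the paper's own proof: factor off the common block of degree $n\wedge p$, collapse it using (T2) to reduce to $\psi_{n'}(y'')^*\psi_{p'}(\langle y',s'\rangle\cdot s'')$, invoke Proposition~\ref{solelprop}(b) on the residual degrees (which satisfy $n'\wedge p'=0$), and reabsorb $\psi_m(x)$ and $\psi_q(t)^*$ with (T3). The elements $\xi_{i,j}$ and $\eta_{i,j}$ you produce coincide with those in the paper, up to the purely notational swap of primed and double-primed factors.
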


\begin{proof}
	Let $N:=n-n\wedge p$ and $P:=p-n\wedge p$. It suffices to prove \eqref{blabla} for  $y=\sigma_{n\wedge p,N}(y''\otimes y')$  and  $s=\sigma_{n\wedge p,P} (s''\otimes s')$.  A calculation using (P2) and (T3) shows that
	\begin{align}\label{coprime}
	\psi_n(y)^*\psi_p(s)&=\psi_{N}(y')^* \psi_{n\wedge p}(y'')^*\psi_{n\wedge p}(s'')\psi_{P}(s')\\
	\notag&=\psi_{N}(y')^* \psi_0(\langle y'',s''\rangle)\psi_{P}(s')\\
	&=\psi_{N}(y')^*\psi_{P}(\langle y'',s''\rangle\cdot s' ).\notag
	\end{align}
Choose an open cover $\{U_i:1\leq i\leq d\}$ of $Z$ such that ${h^N}|_{U_i}$ and ${h^P}|_{U_i}$ are injective, choose a partition of unity $\{\rho_i\}$ subordinate to $\{U_i\}$, and define  $\tau_i:=\sqrt{\rho_i}$. Since $N\wedge P=0$,  Proposition~\ref{solelprop} implies that
	\begin{align*}	
	\psi_{N}(y')^*\psi_{P}\big(\langle y'',s''\rangle\cdot s' \big)=\sum_{i,j=1}^{d}\psi_P\big(\langle y',\tau_j\circ h^P \rangle\cdot\tau_i\big)\psi_N\big(\big\langle \langle y'',s''\rangle\cdot s',\tau_i\circ h^N\big\rangle\cdot \tau_j\big)^*.
	\end{align*}
	We use this to compute: 
	\begin{align*}
	\psi&_m(x)\psi_n(y)^*\psi_p(s)\psi_q(t)^*\\
	&=\psi_m(x)\big(\sum_{i,j=1}^{d}\psi_P\big(\langle y',\tau_j\circ h^P \rangle\cdot\tau_i\big)\psi_N\big(\big\langle \langle y'',s''\rangle\cdot s',\tau_i\circ h^N\big\rangle\cdot \tau_j\big)^*\big)\psi_q(t)^*\\
	&=\sum_{i,j=1}^{d}\psi_{m+P}\big(\sigma_{m,P}\big(x\otimes\langle y',\tau_j\circ h^P \rangle\cdot\tau_i\big)\big)\psi_{q+N}\big(\sigma_{N,q}\big(t\otimes\big\langle \langle y'',s''\rangle\cdot s',\tau_i\circ h^n\big\rangle\cdot\! \tau_j\big)\big)^.
	\end{align*}
	Now $\xi_{i,j}:=\sigma_{m,P}\big(x\otimes\langle y',\tau_j\circ h^P \rangle\cdot\tau_i\big)$ and $\eta_{i,j}:=\sigma_{N,q}\big(t\otimes\big\langle \langle y'',s''\rangle\cdot s',\tau_i\circ h^n\big\rangle\cdot \tau_j\big)$ satisfy \eqref{blabla}.
\end{proof}

\begin{lemma}\label{ADAD}
	Suppose  that $m,n,p,q  \in \N^k$ satisfy $m+p=n+q$ and $n\wedge p=0$. Then
	\[m-m\wedge q=n\, \text{ and }\, q-m\wedge q=p.\]
\end{lemma}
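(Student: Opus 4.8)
The plan is to reduce everything to a coordinatewise statement about natural numbers. All of the operations appearing in the conclusion---componentwise addition, the meet $\wedge$ (componentwise minimum), and the truncated differences $m-m\wedge q$ and $q-m\wedge q$, which genuinely lie in $\N^k$ since $m\wedge q\le m$ and $m\wedge q\le q$---act independently in each coordinate. Hence it suffices to fix $1\le i\le k$ and, writing $a=m_i$, $b=n_i$, $c=p_i$ and $d=q_i$, to prove that the hypotheses $a+c=b+d$ and $\min\{b,c\}=0$ force $a-\min\{a,d\}=b$ and $d-\min\{a,d\}=c$.

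The hypothesis $n\wedge p=0$ says that $\min\{b,c\}=0$ in each coordinate, so at least one of $b,c$ vanishes, and I would split into two cases accordingly. If $b=0$, then $a+c=d$, so $d\ge a$ and therefore $\min\{a,d\}=a$; this gives $a-\min\{a,d\}=0=b$ and $d-\min\{a,d\}=d-a=c$. Symmetrically, if $c=0$, then $a=b+d$, so $a\ge d$ and $\min\{a,d\}=d$; this gives $a-\min\{a,d\}=a-d=b$ and $d-\min\{a,d\}=0=c$. When both $b=0$ and $c=0$ the two cases agree. Since both identities then hold in every coordinate, the two vector identities $m-m\wedge q=n$ and $q-m\wedge q=p$ follow at once.

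There is no genuine obstacle here; the lemma is essentially a bookkeeping fact about $\N^k$. The only two points to keep straight are that the ``subtraction'' is the truncated difference in $\N^k$, which is legitimate precisely because $m\wedge q$ is dominated by both $m$ and $q$, and that the case split dictated by $n\wedge p=0$ is exactly what lets me identify $\min\{a,d\}$ with either $a$ or $d$ in each coordinate, using $m+p=n+q$ to supply the relevant inequality.
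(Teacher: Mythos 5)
Your proof is correct and follows essentially the same route as the paper's: a coordinatewise case split on whether $n_i=0$ or $p_i=0$, using $m+p=n+q$ to pin down $(m\wedge q)_i$ in each case. The only cosmetic difference is that the paper proves $m-m\wedge q=n$ and gets the second identity by symmetry, whereas you verify both identities within each case.
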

\begin{proof}
	We prove $m-m\wedge q=n$, and the other follows by symmetry. Fix $1\leq i\leq k$. Since $n\wedge p=0$, either $n_i=0$ or $p_i=0$.	If $n_i=0$, then  $m+p=n+q$  implies that $m_i\leq q_i$ and hence  $m_i-(m\wedge q)_i=m_i-m_i=0=n_i$. If $p_i=0$, then $m_i\geq q_i$ and $m_i-(m\wedge q)_i=m_i-q_i=n_i-p_i=n_i$. Thus  $m_i-(m\wedge q)_i= n_i$, as  required.
\end{proof}

\begin{proof}[Proof of Proposition~\ref{KMSPROP}]
	Suppose that  $\phi$ satisfies \eqref{KMSCHARAC}. It suffices to  check the KMS condition
	\begin{align}\label{KMS}
	\phi(bc)=e^{-\beta r\cdot(m-n)}\phi(cb)
	\end{align}
	for $b=\psi_m(x)\psi_n(y)^*$ and $c=\psi_p(s)\psi_q(t)^*$ from $\NT(X)$. Let $M:=m-m\wedge q$, $N:=n-n\wedge p$, $P:=p-n\wedge p$  and $Q:=q-m\wedge q$, and consider
\[x=\sigma_{m\wedge q,M}(x''\otimes x'),\ y=\sigma_{n\wedge p,N}(y''\otimes y'),\ s=\sigma_{n\wedge p, P}(s''\otimes s')\ \text{and}\  t=\sigma_{m\wedge q, Q}(t''\otimes t').
\]
We will need the following equations similar to \eqref{coprime}:
	\begin{align}\label{np}
	\psi_n(y)^*\psi_p(s)&=\psi_{N}(y')^*\psi_{P}\big(\langle y'',s''\rangle\cdot s' \big), \text{ and }\\
	\label{qm}
	\psi_q(t)^*\psi_m(x)
	&=\psi_{Q}(t')^*\psi_{M}\big(\langle t'',x''\rangle\cdot x' \big);
	\end{align}
	
By Lemma~\ref{application} and \eqref{KMSCHARAC}, both $\phi(bc)$ and $\phi(cb)$ vanish when $m+p\not=n+q$.  So we assume $m+p=n+q$. We claim that it suffices to prove \eqref{KMS} when $n\wedge p=0$.  Then \eqref{np} implies that $\phi(bc)=\phi\big(\psi_m(x)\psi_{N}(y')^*\psi_{P}\big(\langle y'',s''\rangle\cdot s' \big)\psi_q(t)^*\big)$; since $N\wedge P=0$, we are back in the other case, and thus
	\begin{align}
	\phi(bc)\notag&=e^{-\beta r\cdot (m-N)}\phi\big(\psi_{P}(\langle y'',s''\rangle\cdot s' )\psi_q(t)^*\psi_m(x)\psi_{N}(y')^*\big).
	\end{align}
Two similar calculations using \eqref{qm} and \eqref{np} give
	\begin{align}
	\phi(cb)
	&=\phi\big(\psi_p(s)\psi_{Q}(t')^*\psi_{M}(\langle t'',x''\rangle\cdot x' )\psi_n(y)^*\big)\label{calcKMS}
	\\
	\notag&=e^{-\beta r\cdot (p-Q)}\phi\big(\psi_{M}(\langle t'',x''\rangle\cdot x' )\psi_n(y)^*\psi_p(s)\psi_{Q}(t')^*)\quad\text{since } Q\wedge M=0\\
	\notag&=e^{-\beta r\cdot (p-Q)}\phi\big(\psi_{M}(\langle t'',x''\rangle\cdot x' )\psi_{N}(y')^*\psi_{P}(\langle y'',s''\rangle\cdot s' \big)\psi_{Q}(t')^*\big)\\
	\notag&=e^{-\beta r\cdot (p-Q+M-N)}\phi\big(\psi_{P}(\langle y'',s''\rangle\cdot s')\psi_{Q}(t')^*\psi_{M}(\langle t'',x''\rangle\cdot x' )\psi_{N}(y')^*\big)\\
	\notag&=e^{-\beta r\cdot (p-Q+M-N)}\phi\big(\psi_{P}(\langle y'',s''\rangle\cdot s')\psi_{q}(t)^*\psi_{m}(x )\psi_{N}(y')^*\big).
	\end{align}
Since $m+p=n+q$, we have  $e^{-\beta r\cdot (m-N)}=e^{-\beta r\cdot (m-n)}e^{-\beta r\cdot (p-Q+M-N)}$, and thus \eqref{calcKMS} implies that $\phi(bc)=e^{-\beta r\cdot(m-n)}\phi(cb)$. So it suffices to prove  \eqref{KMS} when $n\wedge p=0$.
	
We therefore assume that  $m+p=n+q$ and $n\wedge p=0$. Choose an open cover $\{U_i:1\leq i\leq d\}$ of $Z$ such that $h^n$ and $h^p$ are injective on each $U_i$. We take a partition of unity $\{\rho_i\}$ subordinate to $\{U_i\}$,  and define  $\tau_i:=\sqrt{\rho_i}$. To compute  $\phi(bc)$, we first use \eqref{FORMULA} to rewrite $\psi_n(y)^*\psi_p(s)$ to get
	\begin{align*}
	\phi(bc)\notag&=\phi\big(\psi_m(x)\psi_n(y)^*\psi_p(s)\psi_q(t)^*\big)\\ 
	&=\phi\Big(\psi_m(x)\Big(\sum_{i,j=1}^{d}\psi_p\big(\langle y,\tau_j\circ h^p \rangle\cdot\tau_i\big)\psi_n\big(\langle s,\tau_i\circ h^n\rangle\cdot \tau_j\big)^* \Big)\psi_q(t)^*\Big)\\
	&=\sum_{i,j=1}^{d}\phi\big(\psi_{m+p}\circ\sigma_{m,p}\big(x\otimes\langle y,\tau_j\circ h^p \rangle\cdot\tau_i\big)\psi_{q+n}\circ\sigma_{q,n}\big(t\otimes\langle s,\tau_i\circ h^n\rangle\cdot \tau_j\big)^* \big).
	\end{align*}
	By our assumption \eqref{KMSCHARAC}, we get $\phi(bc)=e^{-\beta r\cdot (m+p)}\phi\circ \psi_0(\dag)$, where
	\[{\dag:=\sum_{i,j=1}^{d}\big\langle \sigma_{q,n}\big(t\otimes\langle s,\tau_i\circ h^n\rangle\cdot \tau_j\big),\;\sigma_{m,p}\big(x\otimes\langle y,\tau_j\circ h^p \rangle\cdot\tau_i\big)\big\rangle}.
	\]

	To compute $\phi(cb)$, we first observe that $m+p=n+q$ and $n\wedge p=0$ imply	$Q=p$ and $M=n$ (see Lemma~\ref{ADAD}). Thus
	\begin{align*}
	\phi(cb)\notag&=\phi\big(\psi_p(s)\psi_{Q}(t')^*\psi_{M}\big(\langle t'',x''\rangle\cdot x' \big)\psi_n(y)^*\big)\quad\text{by \eqref{qm}} \\
	&=\phi\big(\psi_p(s)\psi_{p}(t')^*\psi_{n}\big(\langle t'',x''\rangle\cdot x' \big)\psi_n(y)^*\big).
	\end{align*}
	 Now we use \eqref{FORMULA} to rewrite the middle terms:
	\begin{align*}
	\notag&\phi(cb)
	=\phi\Big(\psi_p(s)\Big(\sum_{i,j=1}^d\psi_{n}\big(\langle t',\tau_i\circ h^n \rangle\cdot\tau_j\big)\psi_{p}(\big\langle \langle t'',x''\rangle\cdot x',\tau_j\circ h^p \big\rangle\cdot \tau_i\big)^*
	\Big)\psi_n(y)^*\Big)\\
	\notag&=\phi\Big(\sum_{i,j=1}^d\psi_{p+n}\big(\sigma_{p,n}\big(s\otimes\langle t',\tau_i\circ h^n \rangle\cdot\tau_j\big)\big)\psi_{n+p}\big(\sigma_{n,p}\big(y\otimes\big\langle \langle t'',x''\rangle\cdot x',\tau_j\circ h^p\big\rangle\cdot\tau_i\big)\big)^*\Big)
	\end{align*}
	The assumption 	\eqref{KMSCHARAC} implies that $\phi(cb)\notag=e^{-\beta r\cdot (n+p)}\phi\circ \psi_0(\ddag)$, where
	\[
	\ddag:=\sum_{i,j=1}^{d}\big\langle \sigma_{n,p}\big(y\otimes\big\langle \langle t'',x''\rangle\cdot x' ,\tau_j\circ h^p\big\rangle\cdot\tau_i\big),\;
	\sigma_{p,n}\big(s\otimes\langle t',\xi_i\circ h^n\rangle\cdot\tau_j\big)\big\rangle.
	\]	
	Since $e^{-\beta r\cdot (m+p)}=e^{-\beta r\cdot (m-n)}e^{-\beta r\cdot(n+p)}$, to verify the KMS condition \eqref{KMS} it suffices to prove that $\dag=\ddag$. So we evaluate both on $z\in Z$, doing  the easier calculation for $\ddagger(z)$ first. 
	
The $i$-$j$-summand in $\ddagger(z)$ is
\begin{equation}\label{ijsummand}
\sum_{h^{n+p}(w)=z}\!\!\overline{\sigma_{n,p}\big( y\otimes\big\langle \langle t'',x''\rangle\!\cdot\! x' ,\tau_j\circ h^p\big\rangle\!\cdot\!\tau_i\big)(w)}
	\sigma_{p,n}\big(s\otimes\langle t',\tau_i\circ h^n \rangle\cdot\tau_j\big)(w),
\end{equation}
and the $w$-summand in \eqref{ijsummand} is	
\begin{align*}
&\overline{y(w)\big\langle \langle t'',x''\rangle\cdot x' ,\tau_j\circ h^p\big\rangle (h^n(w) )\tau_i\big(h^n(w)\big)}s(w)\langle t',\tau_i\circ h^n \rangle(h^p(w))\tau_j(h^p(w))\\
&=\overline{y(w)}s(w)\langle t',\tau_i\circ h^n \rangle(h^p(w))   \tau_i (h^n(w))\tau_j\big(h^p(w)\big)\big\langle \tau_j\circ h^p\,,\langle t'',x''\rangle\cdot x' \big\rangle(h^n(w))\\
&=\overline{y(w)}s(w)\overline{\big(\tau_i\circ h^n \cdot\langle \tau_i\circ h^n, t'\rangle \big)(w)}\big(\tau_j\circ h^p\cdot\big\langle \tau_j \circ  h^p,\langle t'',x''\rangle\cdot x' \big\rangle\big)(w).
	\end{align*}
By Lemma~\ref{pframe},  $\{\tau_j\circ h^p\}$ and $\{\tau_i\circ h^n\}$   are Parseval frames for $X_n=X(Z,Z,\id,h^n)$ and $X_p=X(Z,Z,\id,h^p)$. Thus when we sum over $i$, $j$ and $w$, the reconstruction formulas for these frames give
	\begin{align*}
	\ddag(z)
	&=\sum_{h^{n+p}(w)=z}\overline{y(w)}s(w)\overline{t'(w)}(\langle t''\!,\!x''\rangle\cdot\!x') (w).
	\end{align*}
		
	Next we compute $\dag(z)$. Using the formula \eqref{multiplicationformula1} for multiplication in $X$, we find that the $i$-$j$-summand in $\dag(z)$ is
	\begin{align*}
	&\dag_{ij}(z)
	=\sum_{h^{m+p}(w)=z}\overline{\sigma_{q,n}\big(t\otimes\langle s,\tau_i\!\circ\! h^n\rangle\cdot \tau_j\big)(w)}
	\sigma_{m,p}\big(x\otimes\langle y,\tau_j\!\circ\! h^p \rangle\cdot\tau_i\big)(w)\\
	&=\sum_{h^{m+p}(w)=z}\!\!\!\!\!\!\overline{t(w)\langle s,\tau_i\circ h^n\rangle(h^q(w))\tau_j (h^q(w))}
	x(w)\langle y,\tau_j\circ h^p \rangle(h^m(w))\tau_i(h^m(w))\\
	&=\sum_{h^{m+p}(w)=z}\overline{t(w)}x(w)\overline{\langle s,\tau_i\circ h^n\rangle(h^q(w))}\tau_i (h^m(w))
	\tau_j(h^q(w))\langle y,\tau_j\circ h^p \rangle(h^m(w)).
	\end{align*}
Lemma~\ref{ADAD} implies that $q=m\wedge q+p$ and $m=m\wedge q+n$.  Then we can rearrange the $w$-summand in $\dag_{ij}(z)$ as
	\begin{align*}
&\overline{t(w)}x(w)\langle \tau_i\circ h^n,s\rangle(h^{m\wedge q+p}(w))\tau_i (h^{m\wedge q+n}(w))\tau_j(h^{m\wedge q+p}(w))\langle y,\tau_j\circ h^p \rangle(h^{m\wedge q+n}(w))\\
&=\overline{t(w)}x(w)\big(\tau_i\circ h^n \,\cdot\,\big\langle \tau_i\circ h^n\,,\, s \big\rangle\big)(h^{m\wedge q}(w))\overline{\big(\tau_j\circ h^p\cdot\langle \tau_j\circ h^p\,,\,y \rangle\big)(h^{m\wedge q}(w))}.
	\end{align*}
We sum these terms over $i$, $j$ and $w$, and use the reconstruction formulas for the  Parseval frames $\{\tau_i\circ h^n\}$ and $\{\tau_j\circ h^p\}$ to recognise
	\[
	\dag(z)=\sum_{h^{n+p+m\wedge q}(w)=z}\overline{t(w)}x(w)s (h^{m\wedge q}(w))\overline{y(h^{m\wedge q}(w))}.
	\]
Finally, we recall that $t=\sigma_{m\wedge q, Q}(t''\otimes t')$, $x=\sigma_{m\wedge q,M}(x''\otimes x')$  and split the sum into two stages:
	\begin{align*}
	\dag(z)&=\sum_{h^{n+p+m\wedge q}(w)=z}\overline{t''(w)t'(h^{m\wedge q}(w))}x''(w)x'(h^{m\wedge q}(w))s (h^{m\wedge q}(w))\overline{y(h^{m\wedge q}(w))}\\
	&=\sum_{h^{n+p}(u)=z}s(u)x'(u)\overline{y(u)t'(u)}\!\!\!\!\sum_{h^{m\wedge q}(w)=u}\overline{t''(w)}x''(w)\\	
	&=\sum_{h^{n+p}(u)=z}s(u)x'(u)\overline{y(u)t'(u)}\langle t'',x''\rangle(u).
	\end{align*}
Thus $\dag(z)=\ddag(z)$, and $\phi$ satisfies \eqref{KMS}.

To prove part (b), suppose that $\phi$ is a KMS$_\beta$ state  on $\NT(X)$ and that $r$ has rationally independent coordinates. To see that $\phi$ satisfies \eqref{KMSCHARAC}, take $x\in X_m$ and  $y\in X_n$.
	By two applications of the KMS condition, we have
	\begin{align*}
	\phi(\psi_m(x)\psi_n(y)^*)&=\phi(\psi_n(y)^*\alpha_{i\beta}(\psi_m(x)))\\
	&=e^{-\beta r\cdot m}\phi(\psi_n(y)^*\psi_m(x))\\
	&=e^{-\beta r\cdot (m-n)}\phi(\psi_m(x)\psi_n(y)^*).
	\end{align*}
Since $r$  has rationally independent coordinates and $\beta>0$, both sides vanish for $m\neq n$. For $m=n$ the  KMS condition and (T2)  imply that
	\[\phi(\psi_m(x)\psi_m(y)^*)=e^{-\beta r\cdot m}\phi(\psi_m(y)^*\psi_m(x))= e^{-\beta r\cdot m}\phi(\psi_0(\langle y,x\rangle)),\]
which is \eqref{KMSCHARAC}.
\end{proof}

\section{KMS states  at large inverse temperatures}\label{sec6}
In this section we  identify the simplex of  KMS$_\beta$ states of $\NT(X)$ for $\beta$ above the critical inverse temperature. The  rational independency condition on the dynamics in Theorem~\ref{theorem1}(b)  also came up in  \cite{HLS, aHLRS}.
Theorem~6.1 extends  \cite[Theorem~6.1]{AaHR} and  \cite[Theorem~6.1]{aHLRS}.  

\begin{thm}\label{theorem1}
	Let $h_1,\dots,h_k$ be  $*$-commuting and  surjective  local homeomorphisms on a compact Hausdorff space   $Z$.
	Let  $X$ be the associated product system over $\N^k$,   as in Proposition~\ref{prop1}. For  $ 1\leq i\leq k$, let  $\beta_{c_i}$ be as in
	\eqref{criticalpoint}, and
	suppose that $r\in (0,\infty)^k$ satisfies $\beta r_i> \beta_{c_i}$ for all $i$.
	Let $f_\beta$ be  the function in  Proposition~\ref{series}(a) and define $\alpha : \R\rightarrow \Aut\NT(X)$ by $\alpha_t=\gamma_{e^{ i t r}}.$
	
	\begin{itemize}
		\item[(a)]Suppose that $\varepsilon$ is a finite Borel measure on $Z$ such 
		that $\int f_\beta\,d\varepsilon=1,$ and define $\mu:=\sum_{n\in \N^k}e^{-\beta r\cdot n}R^n\varepsilon$. Then there is a KMS$_\beta$ state
		$\phi_\varepsilon$
		on $(\NT(X),\alpha)$ such that
		\begin{align}\label{equkms}
		\phi_{\varepsilon}\big(\psi_m(x){\psi_p(y)}^*\big)=\begin{cases}0  & \text{if $m\neq p$}\\e^{-\beta r\cdot m}\int \langle y,x\rangle\,d\mu &  \text{if  $m=p$.}\end{cases}
		\end{align}
		\item[(b)] If in addition $r$ has rationally independent coordinates, then the map $\varepsilon \mapsto \phi_\varepsilon$  is an affine isomorphism of
		\[\Sigma_\beta:=\Big\{\varepsilon\in M(Z)_+:\int f_\beta\,d\varepsilon=1\Big\}\]
		onto the simplex of KMS$_\beta$ states of $(\NT(X),\alpha)$. Given a state $\phi$, let $\mu$ be the probability measure such that $\phi(\psi_0(a))=\int a\,d\mu$ for $a\in C(Z)$. Then the inverse of $\varepsilon \mapsto \phi_\varepsilon$   takes $\phi$ to $\varepsilon:=\prod_{i=1}^k(1-e^{-\beta r_i}R^{e_i})\mu$.
	\end{itemize}
\end{thm}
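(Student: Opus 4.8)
The plan is to realise each $\phi_\varepsilon$ as a concrete positive functional coming from the faithful Fock representation, and then to recognise it as a KMS state using Proposition~\ref{KMSPROP}. The first observation is that formula~\eqref{equkms} is exactly the characterisation~\eqref{KMSCHARAC} with $\phi_\varepsilon\circ\psi_0$ equal to integration against $\mu$. Thus for part~(a) it suffices to produce a \emph{state} $\phi_\varepsilon$ of $\NT(X)$ satisfying \eqref{equkms}; Proposition~\ref{KMSPROP}(a) then upgrades it to a KMS$_\beta$ state for free. To build such a state I would induce the measure $\varepsilon$ through the Fock module. Let $T$ be the Fock representation and $T_*:\NT(X)\to\LL(F(X))$ the induced faithful homomorphism. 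The measure $\varepsilon$ gives a GNS representation of $A=C(Z)$ on $L^2(Z,\varepsilon)$, and since $F(X)=\bigoplus_{n\in\N^k}X_n$ is a right Hilbert $A$-module on which $\NT(X)$ acts by adjointable operators, I can form the Hilbert space $\HH_\varepsilon:=F(X)\otimes_A L^2(Z,\varepsilon)=\bigoplus_{n}\big(X_n\otimes_A L^2(Z,\varepsilon)\big)$ and the induced representation $\pi(b):=T_*(b)\otimes 1$.

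For each $n$ choose a Parseval frame $\{\tau^{(n)}_l\}$ for $X_n$ of the form $\tau^{(n)}_l=\sqrt{\rho^{(n)}_l}$ (Lemma~\ref{pframe}), write $1$ for the constant function in $L^2(Z,\varepsilon)$, and define
\[
\phi_\varepsilon(b):=\sum_{n\in\N^k}e^{-\beta r\cdot n}\sum_{l}\big(\pi(b)(\tau^{(n)}_l\otimes 1)\,\big|\,\tau^{(n)}_l\otimes 1\big)_{\HH_\varepsilon}.
\]
As a sum of vector functionals with positive weights, $\phi_\varepsilon$ is positive. The essential normalisation is the total mass: using $\sum_l\langle\tau^{(n)}_l,\tau^{(n)}_l\rangle(z)=|h^{-n}(z)|$ one computes $\phi_\varepsilon(1)=\sum_n e^{-\beta r\cdot n}\int|h^{-n}|\,d\varepsilon=\int f_\beta\,d\varepsilon=1$, where the convergence of this series is exactly Proposition~\ref{series}(a) and the value is the hypothesis $\int f_\beta\,d\varepsilon=1$. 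Since $\phi_\varepsilon(b^*b)\le\|b\|^2\phi_\varepsilon(1)$, the functional is bounded of norm one, hence a state.

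The substantive step, and the one I expect to be the main obstacle, is verifying \eqref{equkms}. Here I would use the adjoint formula~\eqref{fockadjoint}: on the summand $X_n\otimes_A L^2(Z,\varepsilon)$ the operator $\pi(\psi_m(x)\psi_p(y)^*)$ sends $X_n$ into $X_{n-p+m}$, so its diagonal matrix entries against $\tau^{(n)}_l\otimes 1$ vanish unless $m=p$, which gives the off-diagonal case of \eqref{equkms} immediately from the grading. For $m=p$, a fibrewise computation parallel to the proof of Proposition~\ref{suninvprop}, using \eqref{fockadjoint} and the reconstruction formula for $\{\tau^{(n)}_l\}$, turns the $n$-th term into $e^{-\beta r\cdot n}\int\langle y,x\rangle\,d(R^{\,n-m}\varepsilon)$ when $n\ge m$ (and into $0$ otherwise). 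Reindexing $n=m+n'$ and summing then gives $e^{-\beta r\cdot m}\sum_{n'}e^{-\beta r\cdot n'}\int\langle y,x\rangle\,d(R^{n'}\varepsilon)=e^{-\beta r\cdot m}\int\langle y,x\rangle\,d\mu$, exactly \eqref{equkms}. The care is entirely in the bookkeeping with the frames and the $R^{n}$ shifts, and in justifying the interchange of the absolutely convergent sum over $n$ with the integration; note that the resulting values do not involve the chosen frame, so $\phi_\varepsilon$ is independent of that choice.

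For part~(b), the construction is visibly affine in $\varepsilon$, so $\varepsilon\mapsto\phi_\varepsilon$ is an affine map of $\Sigma_\beta$ into the KMS$_\beta$ simplex. Injectivity and the inverse formula come from Proposition~\ref{series}(b): the restriction of $\phi_\varepsilon$ to $C(Z)$ is $\mu$, and \eqref{recoverep} recovers $\varepsilon=\prod_{i=1}^k(1-e^{-\beta r_i}R^{e_i})\mu$. For surjectivity, take any KMS$_\beta$ state $\phi$ and let $\mu$ be the probability measure with $\phi(\psi_0(a))=\int a\,d\mu$. By Proposition~\ref{suninvprop}, $\mu$ satisfies the subinvariance relation~\eqref{subinvariance}, so Proposition~\ref{series}(c) produces a finite positive Borel measure $\varepsilon:=\prod_{i=1}^k(1-e^{-\beta r_i}R^{e_i})\mu$ with $\sum_n e^{-\beta r\cdot n}R^n\varepsilon=\mu$ and $\int f_\beta\,d\varepsilon=1$, so $\varepsilon\in\Sigma_\beta$. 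Finally, because $r$ has rationally independent coordinates, Proposition~\ref{KMSPROP}(b) shows $\phi$ satisfies \eqref{KMSCHARAC}; hence $\phi$ and $\phi_\varepsilon$ agree on the spanning elements $\psi_m(x)\psi_p(y)^*$, and by continuity $\phi=\phi_\varepsilon$. This establishes the bijection, with inverse $\phi\mapsto\prod_{i=1}^k(1-e^{-\beta r_i}R^{e_i})\mu$ as claimed.
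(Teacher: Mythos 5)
Your part (a) is correct, and its packaging genuinely streamlines the paper's argument. The paper constructs a representation $\theta$ of the product system on $\bigoplus_{n\in\N^k}L^2(Z,R^n\varepsilon)$ by explicit formulas and must then verify (T1)--(T3) and Nica covariance by hand before the universal property of $\NT(X)$ yields $\theta_*$; that verification is a substantial part of its proof. You instead induce the faithful Fock representation: since $T_*:\NT(X)\to\LL(F(X))$ is already a homomorphism, $\pi:=T_*\otimes 1$ on $F(X)\otimes_A L^2(Z,\varepsilon)$ is automatically a representation of $\NT(X)$, so no covariance check is needed. The two pictures agree, because $x\otimes\xi\mapsto x\cdot(\xi\circ h^n)$ is a unitary of $X_n\otimes_A L^2(Z,\varepsilon)$ onto $L^2(Z,R^n\varepsilon)$ (the inner products match by \eqref{defR}) which intertwines $\pi$ with the paper's $\theta_*$. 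Replacing the paper's Borel partitions $\chi_{Z_{q,i}}$ by continuous Parseval frames $\sqrt{\rho^{(n)}_l}$ is also harmless: the collapse of the sum over $h^m$-preimages uses only that $h^m$ is injective on each support (which follows from injectivity of $h^n$ there when $n\geq m$) and that $\sum_l\rho^{(n)}_l=1$, and your reindexing $n=m+n'$ then reproduces the paper's derivation of \eqref{equkms}. The remaining logic --- convergence of the defining series reduced to $b=1$, where it equals $\int f_\beta\,d\varepsilon=1$; the identification of $\phi_\varepsilon\circ\psi_0$ with integration against $\mu$; and the appeal to Proposition~\ref{KMSPROP}(a) --- is exactly the paper's.

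In part (b) there is a genuine, though easily repaired, gap. You prove that $\varepsilon\mapsto\phi_\varepsilon$ is an affine bijection of $\Sigma_\beta$ onto the set of KMS$_\beta$ states --- and your injectivity and surjectivity arguments are precisely the paper's, via Propositions~\ref{suninvprop}, \ref{KMSPROP}(b) and \ref{series}(b)--(c) --- but you never address continuity, and an affine bijection between infinite-dimensional compact convex sets need not be a homeomorphism (push forward probability measures on $[0,1]$ along the Borel bijection swapping $0$ and $1$: this is an affine bijection of a simplex onto itself that is not weak* continuous). So bijectivity alone does not establish an isomorphism of simplices, which is how the paper reads the claim. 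The paper supplies the missing ingredient: the norm estimate \eqref{YYYY} shows that $\varepsilon_j\to\varepsilon$ weak* in $\Sigma_\beta$ implies $\mu_j\to\mu$ weak*, hence $\phi_{\varepsilon_j}\to\phi_{\varepsilon}$ by \eqref{equkms}; since $\Sigma_\beta$ is weak* compact, the continuous affine bijection is then automatically a homeomorphism onto the KMS$_\beta$ simplex. You should add this continuity argument to complete part (b).
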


\begin{proof}[Proof of Theorem~\ref{theorem1}\textnormal{(a)}]
Let $\varepsilon$ be  a finite Borel measure on $Z$ such 
		that $\int f_\beta\,d\varepsilon=1$. We follow the  structure of the proof of
\cite[Theorem~5.1]{AaHR}. Thus  we aim to construct the KMS state $\phi_\varepsilon$ by using a representation $\theta$ of $X$ on
$H_{\theta}:=\bigoplus_{n\in \N^k} L^2(Z,R^n\varepsilon)$; we  write $\xi=(\xi_n)$ for the elements of the direct
sum.  

We now fix $m\in \N^k$ and  $x\in X_m$, and claim that there is a bounded operator $\theta_m(x)$ on $H_\theta$ such that	
\begin{align*}
(\theta_m(x)\xi)_{n}(z)= \begin{cases}
0  & \text{if $n\ngeq m$}\\
x(z)\xi_{n-m}(h^m(z))& \text{if $n\geq m$.}
\end{cases}
\end{align*}
To see the claim, we take $\xi=(\xi_n)\in\bigoplus_{n\in \N^k}  L^2(Z, R^n\varepsilon)$ and 
 compute using the formula \eqref{defR} for $R^n$: 
 \begin{align} 
 \|\theta_m(x)\xi\|^2\notag&=\sum_{n\in \N^k}\|(\theta_m(x)\xi)_{n}\|^2
 = \sum_{n\geq m} \int |x(z)|^2|\xi_{n-m}(h^m(z))|^2\,d(R^{n}\varepsilon)(z)\\                                                                      
\notag&= \sum_{n\in \N^k} \int |x(z)|^2|\xi_{n}(h^m(z))|^2\,d(R^{n+m}\varepsilon)(z)\\
\notag&\leq\sum_{n\in \N^k}\|x\|_\infty^2\int \sum_{h^m(w)=z}|\xi_n(h^m(w))|^2\,d(R^{n}\varepsilon)(z)\\
\notag&=\sum_{n\in \N^k} \|x\|_\infty^2\int \sum_{h^m(w)=z}|\xi_n(z)|^2\,d(R^{n}\varepsilon)(z),
\end{align}	
which, after setting $c_m:=\textstyle{\max_z|h^{-m}(z)|}$, is
\begin{align*}
  \|\theta_m(x)\xi\|^2&\leq\sum_{n\in \N^k} \|x\|_\infty^2 c_m \int |\xi_n(z)|^2\,d(R^{n}\varepsilon)(z)  
=c_m\|x\|_\infty^2 \|\xi\|^2.
\end{align*}	
Thus $\theta_m(x)$ is bounded, as claimed. 	A similar calculation shows that the  adjoint $\theta_m(x)^*$ satisfies
\begin{equation}\label{equ44}
\big(\theta_m(x)^*\eta\big)_n(z)=\sum_{h^m(w)=z}\overline{x(w)}\eta_{n+m}(w)\quad \text{for $\eta\in H_\theta$.} 
\end{equation}
	
	Next we claim that $\theta$ is a  Toeplitz representation of $X$. We check the conditions (T1)$-$(T3). Since $\theta_0(x)$ is  pointwise multiplication by the function $x$, and $X_0=A$,  (T1) is trivially true. 	
	To check (T2), fix $m$ and  $x_1,x_2\in X_m$. Then
	\begin{align*}
	\big(\theta_0(\langle x_1,x_2\rangle)\xi\big)_n(z)&=\langle x_1,x_2\rangle(z)\xi_n(z)
	=\sum_{h^m(w)=z}\overline{x_1(w)}x_2(w)\xi_n(z)\\
	&=\sum_{h^m(w)=z}\overline{x_1(w)}x_2(w)\xi_n(h^m(w))\\
	&=	\sum_{h^m(w)=z}\overline{x_1(w)}(\theta_m(x_2)\xi)_{n+m}(w).
\end{align*}
	Now \eqref{equ44} implies that	
	$\big(\theta_0(\langle x_1,x_2\rangle)\xi\big)_n(z)=\big(\theta_m(x_1)^*\theta_m(x_2)\xi\big)_n(z),$ which is (T2).
	
	For (T3), let  $x\in X_m$ and $y\in X_p$.
	If $n\ngeq m+p$, then  $\big(\theta_{m+p}(xy)\xi\big)_n(z)=0$ and  
	\[\big(\theta_m(x)\theta_p(y)\xi\big)_{n} (z)=x(z)\big(\theta_p(y)\xi\big)_{n-m}\big(h^m(z)\big)=0.\]
So we assume $n\geq m+p$. The multiplication formula \eqref{multiplicationformula1} for $X$ implies that
	\begin{align*}
	\big(\theta_{m+p}(xy)\xi\big)_n(z)&= 
	x(z)y\big(h^m(z)\big) \xi_{n-(m+p)}\big(h^{m+p}(z)\big)\\
	&=x(z)\big(\theta_p(y)\xi\big)_{n-m} (z)\big(h^m(z)\big)\\
	&=\big(\theta_m(x)\theta_p(y)\xi\big)_{n} (z),
	\end{align*}
which gives (T3). Thus $\theta$ is a Toeplitz representation, as claimed.
	
	Next we show that  $\theta$ is Nica covariant.   Fix $m,p\in \N^k$.   Since $m\vee p<\infty$, we need to show 
	that $\psi^{(m)}(1_m)\psi^{(p)}(1_p)=\psi^{(m\vee p)}(1_{m\vee p})$ as in \eqref{nicacov-using-lemma}. We choose  a partition of unity   $\{\rho_j: 1\leq j\leq d\}$ for $Z$  such that   $h^{m\vee p}$ is injective on each $\supp \rho_j$ and 
take $\tau_j:=\sqrt{\rho_j}\in X_m$. Notice that $\{\tau_j\}$ can be viewed  as a Parseval frame for the fibres $X_m,X_p$ and $X_{m\vee p}$.  By  
Lemma~\ref{nicaeasy}, it suffices to show that
	\begin{align}\label{nicalater2}
	\Big(\sum_{i=1}^{d}\theta_m(\tau_i)\theta_m(\tau_i)^*\Big)\Big(\sum_{j=1}^{d}\theta_p(\tau_j)\theta_p(\tau_j)^*\Big)=
	\Big(\sum_{l=1}^{d}\theta_{m\vee p}(\tau_l)\theta_{m\vee p}(\tau_l)^*\Big).
	\end{align}
To see this, we take  $\xi\in H_\theta$ and evaluate both sides of \eqref{nicalater2} at $\xi$. 
	 For the right-hand side of \eqref{nicalater2}, the definition of $\theta_{m\vee p}$  implies that
$\big(\big(\sum_{l=1}^{d}\theta_{m\vee p}(\tau_l)\theta_{m\vee p}(\tau_l)^*\big)\xi\big)_n$
	vanishes unless $n\geq m\vee p$. So we assume $n\geq m\vee p$. For $z\in Z$, the definition of $\theta_{m\vee p}$ and the adjoint formula  \eqref{equ44} imply that
	\begin{align*}
	\Big(\Big(\sum_{l=1}^{d}\theta_{m\vee p}(\tau_l)\theta_{m\vee p}(\tau_l)^*\Big)\xi\Big)_n(z)&=\sum_{l=1}^{d} \big(\tau_l(z)\big(\theta_{m\vee p}(\tau_l)^*\xi\big)_{n-m\vee p}\big(h^{m\vee p}(z)\big)\big)\\
	&=\sum_{l=1}^{d} \Big(\tau_l(z)\!\!\!\!\sum_{h^{m\vee p}(w)=h^{m\vee p}(z)}\!\!\!\!\overline{\tau_l(w)}\xi_n(w)\Big).
	\end{align*}	
	Since $h^{m\vee p}$ is injective on each $\supp \tau_l$, we have
	\[\Big(\Big(\sum_{l=1}^{d}\theta_{m\vee p}(\tau_l)\theta_{m\vee p}(\tau_l)^*\Big)\xi\Big)_n(z)=\sum_{l=1}^{d} 
	\big(\tau_l(z)\overline{\tau_l(z)}\xi_n(z)\big)=\xi_n(z)\sum_{l=1}^{d}\big|\tau_l(z)\big|^2=\xi_n(z).\]
	Thus
	\begin{align*}
	\Big(\Big(\sum_{l=1}^{d}\theta_{m\vee p}(\tau_l)\theta_{m\vee p}(\tau_l)^*\Big)\xi\Big)_n= \begin{cases}
	\xi_n & \text{if $n\geq m\vee p$}\\
	0& \text{otherwise.}
	\end{cases}  
	\end{align*}
	
	For the left-hand side of \eqref{nicalater2},  notice that $\{\tau_i\}$ is a Parseval frame for $X_m$ and $h^{m}$ is injective 
	on each $\supp \tau_i$. Then applying  the same calculation of the previous paragraph (using the formulas for $\theta_m(\tau_i)$ and $\theta_m(\tau_i)^*$), we have
		\begin{align*} 
		\Big(\Big(\sum_{i=1}^{d}\theta_m(\tau_i)\theta_m(\tau_i)^*\sum_{j=1}^{d}\theta_p(\tau_j)\theta_p(\tau_j)^*\Big)\xi\Big)_n= \begin{cases}       
		\big(\big(\sum_{j=1}^{d}\theta_p(\tau_j)\theta_p(\tau_j)^*\big)\xi\big)_n & \text{if $n\geq m$}\\     
		0& \text{otherwise.}
		\end{cases} 
		\end{align*}
A similar computation shows that
	\begin{align*}
	\Big(\Big(\sum_{i=1}^{d}\theta_m(\tau_i)\theta_m(\tau_i)^*\sum_{j=1}^{d}\theta_p(\tau_j)\theta_p(\tau_j)^*\Big)\xi\Big)_n= \begin{cases}
	\xi_n & \text{if $n\geq m\vee p$}\\
	0& \text{otherwise.}
	\end{cases}   \end{align*}
Thus the left-hand side of \eqref{nicalater2} vanishes unless $n\geq m$ and $n\geq p$, which is equivalent to $n\geq m\vee p$, and in that case is $\xi_n$. Thus we have equality in \eqref{nicalater2}, and  $\theta$ is Nica covariant.
	
	Now the universal property of $\NT(X)$ (\cite[Theorem~6.3]{Fo})  gives us a 
	homomorphism $\theta_*:\NT(X)\to B(H_\theta)$ such that $\theta_*\circ\psi=\theta$. For each $q\in \N^k$,  we choose a finite partition $\{Z_{q,i}: 1\leq i\leq I_q\}$ of $Z$ by Borel sets such that $h^q$ is one-to-one on each $Z_{q,i}$.  We take  $Z_{0,1}=Z$, set $I_0=1$, write $\chi_{q,i}:=\chi_{Z_{q,i}}$, and
	define $\xi^{q,i}\in \bigoplus_{n\in \N^k} L^2(Z,R^n\varepsilon)$ by
	\begin{equation*}
	\xi_n^{q,i}= \begin{cases}
	0  & \text{if $n\neq q$}\\                            
	\chi_{q,i}& \text{if $n=q$.}
	\end{cases}
	\end{equation*}

	We now claim that there is a well-defined function $\phi_{\varepsilon}:\NT(X)\to \C$ such that 
	\begin{equation}\label{defphi}
	\phi_{\varepsilon}(b)=\sum_{q\in \N^k}\sum_{i=1}^{I_q} e^{-\beta r\cdot q}\big(\theta_*(b)\xi^{q,i}\,\big|\,\xi^{q,i}\big)\quad\text{for  $b\in \NT(X)$.}
	\end{equation}
We first need to show that  the series converges. Since every element of  a $C^*$-algebra is a linear combination of positive elements, and every positive element $b$ satisfies $b\leq \|b\|1$,  it suffices  to show that the series
	defining $\phi_{\varepsilon}(1)$ is convergent.   By definition, 
	\begin{align*}
	\phi_\epsilon(1)=\sum_{q\in \N^k}\sum_{i=1}^{I_q}e^{-\beta r\cdot q}\big(\chi_{Z_{q,i}}\,\big|\,\chi_{Z_{q,i}}\big)&= \sum_{q\in \N^k}\sum_{i=1}^{I_q}e^{-\beta r\cdot q}\int
	\overline{\chi_{Z_{q,i}}(z)}\chi_{Z_{q,i}}(z)\,d(R^q\varepsilon)(z)\\
	&=\sum_{q\in \N^k}\sum_{i=1}^{I_q}e^{-\beta r\cdot q}R^q\varepsilon(Z_{q,i}).
	\end{align*}
	Since the  $Z_{q,i}$  partition $Z$, we have	
	\begin{align*}
	\sum_{q\in \N^k}\sum_{i=1}^{I_q}e^{-\beta r\cdot q}\big(\chi_{Z_{q,i}}\,\big|\,\chi_{Z_{q,i}}\big)=
	\sum_{q\in \N^k}e^{-\beta r\cdot q}R^q\varepsilon(Z).
	\end{align*}
	By Proposition~\ref{series}(b), the sum  $\sum_{q\in \N^k}e^{-\beta r\cdot q}R^q\varepsilon$	converges to a measure $\mu$, which  is a probability measure because $\int f_\beta\,d\varepsilon=1$. Then
	\begin{align*}
	\sum_{q\in \N^k}\sum_{i=1}^{I_q}e^{-\beta r\cdot q}\big(\chi_{Z_{q,i}}\,\big|\,\chi_{Z_{q,i}}\big)=\mu(Z)=1.
	\end{align*}
	Thus $\phi_\epsilon(1)=1$, and the formula \eqref{defphi} gives us a well-defined state on  $\TT(X(E))$.

	To see that $\phi_\varepsilon$ satisfies \eqref{equkms},
	take $x\in X_m$,  $y\in X_p$ and $b=\psi_m(x)\psi_p(y)^*$. Since $\xi^{q,i}$ is zero in all except the 
	$q$-summand,
	$\theta_*(b)\xi^{q,i}=\theta_m(x)\theta_p(y)^*\xi^{q,i}$
	is zero in all but the $(q-p+m)$-summand. Thus 
	\[\big(\theta_*(b)\xi^{q,i}\,\big|\,\xi^{q,i}\big)=0\quad \text{for all $q,i$ for $p\not=m$.}\]
Thus $\phi_{\varepsilon}(b)=0$  giving \eqref{equkms} when $p\not=m$.  Next we suppose that $p=m$. If $q\ngeq m$, then $\theta_m(x)\theta_m(y)^*\xi^{q,i}=0$, so we suppose also that $q\geq m$. Since  $h^q$ is injective on $Z_{q,i}$, $h^m$ is injective on each $Z_{q,i}$. Then  
	\begin{align*}
	\big(\theta_m(x)\theta_m(y)^*\xi^{q,i}\,\big|\,\xi^{q,i}\big)
	&=\int \Big(x(z)\sum_{h^m(w)=h^m(z)}\overline{y(w)}\chi_{q,i}(w)\Big)\overline{\chi_{q,i}(z)}\,d(R^q\varepsilon)(z)\\
	&=\int x(z)\overline{y(z)}\overline{\chi_{q,i}(z)}\chi_{q,i}(z)\,d(R^q\varepsilon)(z).
	\end{align*}
	Since the $Z_{q,i}$ partition $Z$, summing over $i$ gives 
	\[ \sum_{i=1}^{I_q} \big(\theta_*(\psi_m(x)\psi_m(y)^*)\xi^{q,i}\,\big|\,\xi^{q,i}\big) =\int x(z)\overline{y(z)}\,d(R^q\varepsilon)(z).\]  
	Now  using the  formula \eqref{defR} for $R^m$, we have
	\begin{align*}
	\phi_{\varepsilon}\big(\psi_m(x)\psi_m(y)^*\big)&=\sum_{q\geq m} e^{-\beta r\cdot q}\int x(z)\overline{y(z)}\,d(R^q\varepsilon)(z)\\  
&=\sum_{q\geq m}  e^{-\beta r\cdot q}\int\sum_{h^m(w)=z} x(w)\overline{y(w)}\,d(R^{q-m}\varepsilon)(z)\\
&=\sum_{q\in \N^k}  e^{-\beta r\cdot  (m+q)}\int\langle y, x\rangle(z)\,d(R^{q}\varepsilon)(z)\\
&=e^{-\beta r\cdot m}\int \langle y, x\rangle\,d\Big(\sum_{q\in \N^k}  e^{-\beta q}R^{q}\varepsilon\Big)\\
&=e^{-\beta r\cdot  m}\int\langle y, x\rangle \,d\mu, 
\end{align*}
which is \eqref{equkms}.
	
	To see that $\phi_\varepsilon$ is a KMS$_\beta$ state,
	we apply Proposition~\ref{KMSPROP} with $m=p=0$  and  $x=y=b\in A$ to get
	\[\phi_\varepsilon\big(\psi_0(bb^*)\big)=\phi_\varepsilon\big(\psi_0(b)\psi_0(b)^*\big)=\int\langle b,b^*\rangle_A\,d\mu=\int bb^*\,d\mu.\]
	This implies that $\phi_\varepsilon(\psi_0(a))=\int a\,d\mu$ for all positive $a\in A$,  hence for all  $a\in A$, and in particular $a= \langle y,x\rangle$.
	Now
	\[\phi_{\varepsilon}\big(\psi_m(x)\psi_n(y)^*\big)=\delta_{m,n}e^{-\beta r\cdot m}\phi_{\varepsilon}\big(\psi_0 \big(\langle y,x\rangle\big)\big),\]
	and Proposition~\ref{KMSPROP}(a)  implies that  $\phi_\varepsilon$ is a  KMS$_\beta$ state.
\end{proof}

\begin{proof}[Proof of Theorem~\ref{theorem1}\textnormal{(b)}]
	Now suppose that $r$ has rationally independent coordinates. Since $\Sigma_\beta$ is a weak* compact subset of $C(Z)^*$ in the  weak$^*$ norm, to prove that $\varepsilon\mapsto \phi_\varepsilon$ is an isomorphism, it suffices to show that it is injective, surjective and weak* continuous.
	
	Let $\phi$ be a KMS$_\beta$ state, and take $\mu$ to be the probability measure such that $\phi\big(\psi_0(a)\big)=\int a\,d\mu$ for $a\in C(Z)$.  By  Proposition~\ref{suninvprop},  $\mu$ satisfies the subinvariance  relation \eqref{subinvariance}. Since $r$ has rationally independent coordinates,   Proposition~\ref{KMSPROP} implies that
	\begin{align}\label{KMSCHARAC1}
	\phi\big(\psi_m(x){\psi_n(y)}^*\big)=\delta_{m,n}	e^{-\beta r\cdot m}\phi\big(\psi_0\big(\langle y,x\rangle\big)\big)=
	e^{-\beta r\cdot m}\int\langle y,x\rangle\,d\mu.
	\end{align}
	 Then   Proposition~\ref{series}(c) implies that $\varepsilon:=\prod_{i=1}^{k}\big(1-e^{-\beta r_i}R^{e_i}\big)\mu$ belongs to $\Sigma_\beta$ and satisfies 
	$\sum_{n\in \N^k}e^{-\beta r\cdot n }R^n\varepsilon=\mu$. Now applying part~(a)  to $\varepsilon$ gives a KMS$_\beta$ state
	$\phi_\varepsilon$ such that
	\begin{align}\label{equkms1}
	\phi_{\varepsilon}\big(\psi_m(x){\psi_n(y)}^*\big)=\begin{cases}
	0  & \text{if $m\neq n$}\\
	e^{-\beta r\cdot m}\int\langle y,x\rangle\,d\mu &  \text{if  $m=n$.}
	\end{cases}
	\end{align}
	Comparing  equations \eqref{equkms1} and \eqref{KMSCHARAC1} gives $\phi=\phi_\varepsilon$. Thus $\varepsilon\mapsto \phi_\varepsilon$ is surjective.
	
	Next suppose that $\epsilon_i\in \Sigma_\beta$ and $\phi_{\varepsilon_1}=\phi_{\varepsilon_2}$. Define $\mu_i$ by 
	$\phi_{\varepsilon_i}\circ\psi_0(a)=\int a\,d\mu_i$ for $a\in A$. Then  $\mu_1=\mu_2$.  The construction of the  previous paragraph shows that
	\[
	\varepsilon_1=\prod_{i=1}^{k}\big(1-e^{-\beta r_i}R^{e_i}\big)\mu_1=
	\prod_{i=1}^{k}\big(1-e^{-\beta r_i}R^{e_i}\big)\mu_2=\varepsilon_2.
	\]
	Thus  $\varepsilon\mapsto \phi_{\varepsilon}$ is one-to-one.
	
	Finally, suppose that $\varepsilon_j\rightarrow \varepsilon$ in $\Sigma_\beta$. The calculation \eqref{YYYY} implies that  
\[
\mu_j:=\sum_{n\in \N^k}e^{-\beta r\cdot n}R^n\varepsilon_j\to \mu:=\sum_{n\in \N^k}e^{-\beta r\cdot n}R^n\varepsilon
\]  
in the weak* topology, and then \eqref{equkms} implies that   $\phi_{\varepsilon_j}\rightarrow\phi_{\varepsilon}$ in the weak* topology.
\end{proof}

The next corollary extends \cite[Corollary~5.3]{AaHR}.

\begin{cor}\label{cor6}
	Let $h_1,\dots,h_k$ be  $*$-commuting and  surjective  local homeomorphisms on a compact Hausdorff space   $Z$, and let  $X$ be the associated product system over $\N^k$,   as in Proposition~\ref{prop1}. Define  $\beta_{c_i}$ by \eqref{criticalpoint} and suppose that $r\in (0,\infty)^k$ has rationally independent coordinates. Define $\bar{\alpha}: \R\rightarrow \Aut\OO(X)$ in terms of the gauge action $\bar{\gamma}$ by $\bar{\alpha}_t=\bar{\gamma}_{e^{ i t r}}$. If there is a KMS$_\beta$ state of  $(\OO(X),\bar{\alpha})$, then $\beta r_i\leq \beta_{c_i} $ for at least one $i$.
\end{cor}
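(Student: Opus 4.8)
The plan is to argue by contraposition: I will assume that $\beta r_i>\beta_{c_i}$ for \emph{every} $i$ and show that then $(\OO(X),\bar\alpha)$ can carry no KMS$_\beta$ state. The whole strategy is to pull a putative KMS state back to $\NT(X)$, where Theorem~\ref{theorem1} gives us complete control, and then show that the Cuntz--Pimsner relations are incompatible with that control.

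First I would set up the pullback. By Lemma~\ref{fixdef}, $\OO(X)$ is the quotient of $\NT(X)$ by the ideal \eqref{quotient0}; write $q:\NT(X)\to\OO(X)$ for the quotient map. The action $\bar\alpha$ is induced from $\alpha$ through $q$ (both are $t\mapsto e^{itr}$ composed with the respective gauge actions, and $\gamma$ fixes the generators of \eqref{quotient0}), so $q\circ\alpha_t=\bar\alpha_t\circ q$ and $q$ carries analytic elements to analytic elements. Hence if $\bar\phi$ is a KMS$_\beta$ state of $(\OO(X),\bar\alpha)$, then $\phi:=\bar\phi\circ q$ is a KMS$_\beta$ state of $(\NT(X),\alpha)$. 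Since $\beta r_i>\beta_{c_i}$ for all $i$ and $r$ has rationally independent coordinates, Theorem~\ref{theorem1}(b) applies, giving $\phi=\phi_\varepsilon$ for a unique $\varepsilon\in\Sigma_\beta$, with associated probability measure $\mu=\sum_{n\in\N^k}e^{-\beta r\cdot n}R^n\varepsilon$ and $\phi$ determined by \eqref{equkms}.

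The heart of the argument is that $\phi$ annihilates the generators of \eqref{quotient0}. Because the left action on each fibre is by compact operators (Proposition~\ref{prop1}), we have $\varphi_{e_1}^{-1}(\KK(X_{e_1}))=C(Z)$, so for every $a\in C(Z)$ the relation forces $\phi(\psi_0(a))=\phi\big(\psi^{(e_1)}(\varphi_{e_1}(a))\big)$. The left side is $\int a\,d\mu$. For the right side, choose a partition of unity $\{\rho_j\}$ with $h_1$ injective on each $\supp\rho_j$ and set $\xi_j:=\sqrt{\rho_j}$; then $\varphi_{e_1}(a)=\sum_j\Theta_{a\cdot\xi_j,\xi_j}$ as in Proposition~\ref{prop1}, so $\psi^{(e_1)}(\varphi_{e_1}(a))=\sum_j\psi_{e_1}(a\cdot\xi_j)\psi_{e_1}(\xi_j)^*$. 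Applying \eqref{equkms} with $m=p=e_1$, together with $\sum_j|\xi_j|^2=1$ and the definition \eqref{defR} of $R^{e_1}$, collapses this to $e^{-\beta r_1}\int a\,d(R^{e_1}\mu)$. Thus $\int a\,d\mu=e^{-\beta r_1}\int a\,d(R^{e_1}\mu)$ for all $a\in C(Z)$, which as an identity of measures reads $(1-e^{-\beta r_1}R^{e_1})\mu=0$.

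Finally I would extract the contradiction. Since the operators $R^{e_i}$ commute, the product $\varepsilon=\prod_{i=1}^k(1-e^{-\beta r_i}R^{e_i})\mu$ (the inverse map from Theorem~\ref{theorem1}(b)) contains the vanishing factor $(1-e^{-\beta r_1}R^{e_1})\mu$, so $\varepsilon=0$. But $\varepsilon\in\Sigma_\beta$ means $\int f_\beta\,d\varepsilon=1\neq0$, a contradiction. Therefore no KMS$_\beta$ state of $(\OO(X),\bar\alpha)$ exists when $\beta r_i>\beta_{c_i}$ for all $i$, which is exactly the contrapositive of the assertion. I do not expect a genuine obstacle here: given Theorem~\ref{theorem1}(b), the only real content is the computation identifying Cuntz--Pimsner covariance (on the generator $e_1$) with the measure equation $(1-e^{-\beta r_1}R^{e_1})\mu=0$, and this is a routine frame calculation using \eqref{equkms} and \eqref{defR}.
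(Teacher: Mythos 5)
Your proof is correct. You and the paper share the same outer skeleton: pull the putative KMS$_\beta$ state back through the quotient map $q$ of Lemma~\ref{fixdef}, use rational independence to invoke Theorem~\ref{theorem1}(b) and write $\phi\circ q=\phi_\varepsilon$ with $\varepsilon\in\Sigma_\beta$, then contradict the fact that $\phi_\varepsilon$ must annihilate $\ker q$. But the mechanism of the contradiction is genuinely different. The paper builds a single test element: it takes a Urysohn bump $f$ supported where all the $h^{e_J}$ are injective, forms $b=\psi_0(|f|^2)+\sum_{\emptyset\subsetneq J\subseteq K}(-1)^{|J|}\psi_J(f_J)\psi_J(f_J)^*$, verifies $b\in\ker q$ by an inclusion--exclusion rewriting, and computes $\phi_\varepsilon(b)=\int|f|^2\,d\varepsilon\geq\varepsilon(V_l)>0$, which requires first arguing $\varepsilon(Z)>0$ and locating a set $V_l$ that $\varepsilon$ charges. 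You instead exploit the covariance relation at the single generator $p=e_1$: since $\psi_0(a)-\psi^{(e_1)}(\varphi_{e_1}(a))\in\ker q$ for \emph{every} $a\in C(Z)$, the frame computation with \eqref{equkms} and \eqref{defR} yields the identity $(1-e^{-\beta r_1}R^{e_1})\mu=0$ of functionals on $C(Z)$, and then commutativity of the $R^{e_i}$ together with the inverse formula $\varepsilon=\prod_{i=1}^k(1-e^{-\beta r_i}R^{e_i})\mu$ forces $\varepsilon=0$, flatly contradicting $\int f_\beta\,d\varepsilon=1$. Your route is more economical: it needs no Urysohn function, no refinement of open covers, no positivity of $\varepsilon$ on a specific set, and no sum over subsets $J$ -- one family of kernel generators produces an equation of measures rather than a single scalar inequality, and that equation kills $\varepsilon$ outright. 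What the paper's version buys is an explicit positive element of $\ker q$ on which the state is strictly positive, making the obstruction concrete; but as a proof of the corollary, yours is complete and, if anything, shorter.
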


\begin{proof}
	Suppose that $\phi$ is  a KMS$_\beta$ state of  $(\OO(X),\bar{\alpha})$. We suppose that   $\beta r_i>\beta_{c_i}$	 for $1\leq i\leq k $, and aim for a contradiction. Let $q: \NT(X)\rightarrow\OO(X)$ be the quotient map of Lemma~\ref{fixdef}. Then $\phi\circ q$ is a KMS$_\beta$ state for the  system $(\NT(X), \alpha)$ in Theorem~\ref{theorem1}. Since $r$ has rationally independent coordinates,  part~(b) of Theorem~\ref{theorem1} gives a measure $\varepsilon$ on $Z$ such that $\int f_\beta\,d\varepsilon=1$ and $\phi\circ q=\phi_\varepsilon.$ Since  $f_\beta\geq 1$, $\int f_\beta\,d\varepsilon=1$ implies that  $\varepsilon(Z)>0.$
	
	Set $K:=\{1,\dots,k\}$ and for $J\subset K$, set $e_J:=\sum_{i\in J}e_i$. Choose an open cover $\{U_l:1\leq l\leq d \}$  of  $Z$ such that    $h^{e_J}|_{U_l}$  is injective for every $J\subset K$ and $1\leq l\leq d$.
	By \cite[Lemma~4.32]{tfb} there is an open cover $\{V_l:1\leq l\leq d \}$ of $Z$ such that $\overline{V_l}\subset U_l$ for each $l$. Since $\varepsilon(Z)>0 $, there exists $l$ such that $\varepsilon(V_l)>0.$ By Urysohn's Lemma, there is a function $f\in C(Z)$ such that $f(z)=1$ for $z\in V_l$ and $\supp f\subset U_l$.
	
	Next for each $J\subset K$,  take $f_J:=f\in X_{e_J}$ and view $|f|^2$ as an element of $A= C(Z)$. We aim  to set up a contradiction by showing that
	\[b:=\psi_0(|f|^2)+\sum_{\emptyset \subsetneq J\subseteq K}(-1)^{|J|} \psi_J(f_J)\psi_J(f_J)^*\]	
	belongs to $\ker q$ but $\phi_\varepsilon(b)=\phi\circ q(b)\not=0$. Since the left action of $|f|^2$ on each fibre $X_{e_J}$ is implemented by the finite-rank operator $\Theta_{f_J,f_J}$, we have
	\begin{align*}
	b&=
	\psi_0(|f|^2)+\sum_{\emptyset \subsetneq J\subseteq K}(-1)^{|J|} \psi^{(e_J)}\big(\Theta_{f_J,f_J}\big)\\
	&=\sum_{\emptyset \subsetneq J\subseteq K}(-1)^{(|J|+1)}\psi_0(|f|^2)+\sum_{\emptyset \subsetneq J\subseteq K}(-1)^{|J|} \psi^{(e_J)}\big(\varphi_{e_J}(|f_J|^2)\big)\\
	&=\sum_{\emptyset \subsetneq J\subseteq K}(-1)^{|J|}\big(\psi_0(|f|^2)- \psi^{(e_J)}\big(\varphi_{e_J}(|f|^2)\big)\big).
	\end{align*}
Since each summand is in $\ker q$, so is $b$.
	
	Next we compute  $\phi_\varepsilon(b)$ using the measure $\mu$ in part (b) of Theorem~\ref{theorem1}:
	\begin{align*}
	\phi_\varepsilon(b)
	&=\int |f|^2(z)\, d\mu(z)+\sum_{\emptyset \subsetneq J\subseteq K}(-1)^{|J|} e^{-\beta r\cdot e_J}\int \big\langle f_J,\overline{f_J}\big\rangle(z)\,d\mu(z)\\
	&=\int |f|^2(z)\, d\mu(z)+\sum_{\emptyset \subsetneq J\subseteq K}(-1)^{|J|} e^{-\beta r\cdot e_J}\int \sum_{h^{e_J}(w)=z}|f|^2(w)\,d\mu(z).
	\end{align*}
Recalling the the definition of $R$ from \eqref{defR},  we have
	\begin{align*}
	\phi_\varepsilon(b)	
	&=\int |f(z)|^2(z)\, d\mu(z)+\sum_{\emptyset \subsetneq J\subseteq K}(-1)^{|J|} e^{-\beta r\cdot e_J}\int|f^2(z)|\,d(R^{e_J}\mu)(z)\\
&=\int |f(z)|^2\,d\Big(\prod_{i\in K}(1-e^{-\beta r_i}R^{e_i})\mu\Big)(z)\\
&=\int |f(z)|^2\, d\varepsilon(z)\geq \varepsilon(V_l)>0.
	\end{align*}
Thus we have our contradiction, and the proof is complete.
\end{proof}

In Theorem~\ref{theorem1}, we first chose an $r\in \N^k$ and then characterised KMS states of the dynamical system $\big(\NT(X),\alpha\big)$  for $\beta$ satisfying $\beta > \max_{i}r_i^{-1}\beta_{c_i}$.
Thus the range of possible inverse temperature  is dependent on the choice of $r\in \N^k$. When $r$ is a multiple of  $(\beta_{c_1},\dots,\beta_{c_k} )$, following the recent conventions for   $k$-graph algebras (see \cite{Ya1,Ya2,aHLRS}), we call the common value $\beta_c:=r_i^{-1}\beta_{c_i}$  the \textit{critical inverse temperature}. In particular, we are interested in  $r:=(\beta_{c_1},\dots,\beta_{c_k} )$ which gives the critical inverse temperature $\beta_c=1$. In this case, we refer to the associated dynamics $\alpha:t\mapsto\gamma_{e^{itr}}$ as  the  \textit{preferred dynamics}. Our next corollary is about the preferred dynamics.

\begin{cor}\label{cor-rescue}
	Let $h_1,\dots,h_k$ be  $*$-commuting and  surjective  local homeomorphisms on a compact Hausdorff space   $Z$, and let  $X$ be the associated product system over $\N^k$,   as in Proposition~\ref{prop1}. Define  $\beta_{c_i}$ by \eqref{criticalpoint} and let $r:=(\beta_{c_1},\dots,\beta_{c_k})$. 
\begin{enumerate}
	\item\label{cor-rescue-a} Then there is a KMS$_1$ state of $\big(\NT(X),\alpha\big)$.
	\item\label{cor-rescue-b} Suppose that  $r$ has rationally independent coordinates. Define $\bar{\alpha}: \R\rightarrow \Aut\OO(X)$ in terms of the gauge action $\bar{\gamma}$ by $\bar{\alpha}_t=\bar{\gamma}_{e^{ i t r}}$. If there is a KMS$_\beta$ state of  $(\OO(X),\bar{\alpha})$, then $\beta \leq 1$.
\end{enumerate}
\end{cor}

Motivated by \cite[Theorem~6.1]{AaHR}, for example, we suspect that at least one KMS$_1$ state of $\big(\NT(X),\alpha\big)$  should factor through a state of $(\OO(X),\bar{\alpha})$, but we have been unable to prove this. 

\begin{proof}[Proof of Corollary~\ref{cor-rescue}]
Choose a decreasing sequence $\{\beta_j\}$ such that $\beta_j\rightarrow 1$ and a probability measure $\nu$ on $Z$. Then $K_j:=\int f_{\beta_j}\,d\nu$ belongs to $[1,\infty)$, and $\varepsilon_j:=K_j^{-1}\nu$ satisfies $\int f_{\beta_j}\,d\varepsilon_j=1$. Thus for each $j$, part $(a)$ of Theorem~\ref{theorem1} gives  a KMS$_{\beta_j}$ state  $\phi_{\varepsilon_j}$ on $\big(\NT(X),\alpha\big)$.  Since  $\{\phi_{\varepsilon_j}\}$ is a sequence  in the compact unit ball of $C(Z)^*$,  by passing to a subsequence and relabeling, we may assume that  $\phi_{\varepsilon_j}\rightarrow \phi$ for some state $\phi$.
Now \cite[Proposition~5.3.23]{BRII} implies that $\phi$ is a KMS$_{1}$ state of $\big(\NT(X),\alpha\big)$. This gives (\ref{cor-rescue-a}). 

For (\ref{cor-rescue-b}), suppose that $r$ has rationally independent coordinates.  Then Corollary~\ref{cor6} implies that there exists $i$ such that $\beta\leq r_i^{-1}\beta_{c_i}$. Since $r_i=\beta_{c_i}$, we have $\beta\leq 1$.
\end{proof}

\section{Ground states and KMS$_\infty$ states}\label{sec8}

The next proposition is an   extension of    \cite[Proposition~8.1]{aHLRS} and \cite[Proposition~5.1]{aHLRS1} from dynamical systems of graph algebras  to  the dynamical system $(\NT(X),\alpha)$.
\begin{prop}\label{groundprop}
	Let $h_1,\dots,h_k$ be $*$-commuting and surjective local homeomorphisms on a compact Hausdorff space $Z$ and let $X$ be the associated product system  as in Proposition~\ref{prop1}.
	Suppose that $r\in (0,\infty)^k$  and $\alpha : \R\rightarrow \Aut\NT(X) $ is given in terms of the gauge action
 by $\alpha_t=\gamma_{e^{i t r}}$.  For each probability  measure $\varepsilon$ on $Z$ there is a unique KMS$_\infty$ state $\phi_\varepsilon$ such that
	
	\begin{align}\label{groundformula3}
	\phi_\varepsilon\big(\psi_m(x)\psi_n(y)^*\big)=\begin{cases}
	\int\langle y,x\rangle\,d\varepsilon  &\text{if $m=n=0$ }\\
	0  &\text{otherwise.}\\
	\end{cases}
	\end{align}	
	The map $\varepsilon\mapsto \phi_\varepsilon$ is an affine isomorphism of the simplex of probability measures on $Z$ onto the ground states of 
	$(\NT(X),\alpha)$, and  every ground state of $(\NT(X),\alpha)$ is a KMS$_\infty$ state.
\end{prop}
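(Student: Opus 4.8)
The plan is to characterise the ground states completely, proving that a state $\phi$ of $(\NT(X),\alpha)$ is a ground state if and only if it satisfies \eqref{groundformula3} for the probability measure $\varepsilon$ determined by $\phi\circ\psi_0$. Granting this characterisation, injectivity and affineness of $\varepsilon\mapsto\phi_\varepsilon$ are immediate from \eqref{groundformula3}, surjectivity onto the ground states is exactly the characterisation, and the assertion that every ground state is a KMS$_\infty$ state follows once we know each $\phi_\varepsilon$ arises as a weak$^*$ limit of KMS$_{\beta_j}$ states. Throughout I use that it suffices to verify the ground-state condition on the spanning elements $\psi_m(x)\psi_n(y)^*$, which are analytic with $\alpha_z\big(\psi_m(x)\psi_n(y)^*\big)=e^{izr\cdot(m-n)}\psi_m(x)\psi_n(y)^*$.

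For the forward direction, suppose $\phi$ is a ground state. Taking $a=\psi_n(y)$ and $b=\psi_0(1)\psi_m(x)^*$, and using $\psi_0(1)^*\psi_0(1)=\psi_0(1)$ to simplify $ab=\psi_n(y)\psi_m(x)^*$, gives $\phi(a\alpha_z(b))=e^{-izr\cdot m}\phi(\psi_n(y)\psi_m(x)^*)$; since $|e^{-izr\cdot m}|=e^{\operatorname{Im}(z)r\cdot m}$ is unbounded on the upper half-plane whenever $m\neq 0$ (here I use $r\in(0,\infty)^k$), boundedness forces $\phi(\psi_n(y)\psi_m(x)^*)=0$ for $m\neq 0$. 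Applying this together with the identity $\phi(c^*)=\overline{\phi(c)}$ yields $\phi(\psi_m(x)\psi_n(y)^*)=0$ unless $m=n=0$, and the restriction of $\phi$ to $\psi_0(C(Z))\cong C(Z)$ supplies the probability measure $\varepsilon$, giving \eqref{groundformula3}. For the reverse direction I assume \eqref{groundformula3} and check the condition on $a=\psi_m(x)\psi_n(y)^*$, $b=\psi_p(s)\psi_q(t)^*$: Lemma~\ref{application} rewrites $ab$ as a finite sum of terms $\psi_{m'}(\xi)\psi_{n'}(\eta)^*$ with $m'=m+p-n\wedge p$ and $n'=n+q-n\wedge p$, and \eqref{groundformula3} annihilates all of these unless $m'=n'=0$. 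Since $n\wedge p\leq p$ and $n\wedge p\leq n$, the equations $m+p=n\wedge p$ and $n+q=n\wedge p$ force $m=q=0$ and $n=p$, so the only surviving contribution carries the factor $e^{izr\cdot(p-q)}=e^{izr\cdot n}$, whose modulus $e^{-\operatorname{Im}(z)r\cdot n}\leq 1$ on the upper half-plane; hence $\phi$ is a ground state.

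To produce $\phi_\varepsilon$ and simultaneously show it is a KMS$_\infty$ state, I realise it as a weak$^*$ limit of the states of Theorem~\ref{theorem1}. Fix a probability measure $\varepsilon$, choose $\beta_j\to\infty$ with each $\beta_j r_i>\beta_{c_i}$, and set $\varepsilon_j:=K_j^{-1}\varepsilon$ with $K_j:=\int f_{\beta_j}\,d\varepsilon\in[1,\infty)$, so that $\int f_{\beta_j}\,d\varepsilon_j=1$ and Theorem~\ref{theorem1}(a) yields KMS$_{\beta_j}$ states $\phi_{\varepsilon_j}$. Because $f_{\beta_j}\to 1$ as $\beta_j\to\infty$, we have $K_j\to 1$, and the associated measures $\mu_j=\sum_{n\in\N^k}e^{-\beta_j r\cdot n}R^n\varepsilon_j$ converge in norm to $\varepsilon$: the $n=0$ term is $\varepsilon_j\to\varepsilon$, while the remaining positive terms have total mass $\int f_{\beta_j}\,d\varepsilon_j-\varepsilon_j(Z)=1-K_j^{-1}\to 0$. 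Feeding this into \eqref{equkms} and using $e^{-\beta_j r\cdot m}\to\delta_{m,0}$ shows that $\phi_{\varepsilon_j}\big(\psi_m(x)\psi_p(y)^*\big)$ converges to the right-hand side of \eqref{groundformula3}, so $\phi_{\varepsilon_j}\to\phi_\varepsilon$ weak$^*$; thus $\phi_\varepsilon$ exists, satisfies \eqref{groundformula3}, and is a KMS$_\infty$ state. Combined with the characterisation, each ground state equals some $\phi_\varepsilon$ and is therefore a KMS$_\infty$ state, completing every assertion.

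I expect the reverse implication in the characterisation to be the main obstacle, since it is the one step that genuinely uses the multiplicative structure of $\NT(X)$: one must invoke Lemma~\ref{application} to collapse the quadruple product $\psi_m(x)\psi_n(y)^*\psi_p(s)\psi_q(t)^*$ into spanning form and then carry out the index bookkeeping showing that $m'=n'=0$ is equivalent to $m=q=0$ and $n=p$. By contrast, the limiting argument is routine once the norm-convergence $\mu_j\to\varepsilon$ is established, and that rests only on the elementary identity $\|\mu_j\|=\int f_{\beta_j}\,d\varepsilon_j=1$ together with $K_j\to 1$.
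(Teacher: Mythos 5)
Your proposal is correct and takes essentially the same route as the paper: your two-way characterisation of ground states is exactly the paper's Lemma~\ref{groundlemma} (the same upper-half-plane boundedness argument for necessity, and the same use of Lemma~\ref{application} with the index bookkeeping $m+p-n\wedge p=0=q+n-n\wedge p$ for sufficiency), and your construction of $\phi_\varepsilon$ as a weak$^*$ limit of the KMS$_{\beta_j}$ states attached to $\varepsilon_j=K_j^{-1}\varepsilon$, resting on $K_j\to 1$ (which the paper justifies via dominated convergence), matches the paper's proof of Proposition~\ref{groundprop}. The only cosmetic difference is that you obtain convergence of the whole sequence from the norm estimate $\|\mu_j-\varepsilon\|\le 2(1-K_j^{-1})$, whereas the paper first extracts a weak$^*$ convergent subsequence and then identifies the limit on spanning elements.
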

For the proof of this proposition,  we  need the  following  generalisation of \cite[Proposition~3.1(c)]{aHLRS1} and \cite[Proposition~2.1(b)]{aHLRS}. 
\begin{lemma}\label{groundlemma}
	Let $h_1,\dots,h_k$ be $*$-commuting and surjective local homeomorphisms on a compact Hausdorff space $Z$ and let $X$ be the associated product system  as in Proposition~\ref{prop1}.
	Suppose that $r\in (0,\infty)^k$  and $\alpha : \R\rightarrow \Aut\NT(X) $ is given in terms of the gauge action by $\alpha_t=\gamma_{e^{
			i t r}}$. Suppose   that $\beta>0$ and let $\phi$
	be a state on $\NT(X)$. Then $\phi$ is a ground state  of $(\NT(X),\alpha)$ if and only if
	\begin{align}\label{groundformula1}
	\phi\big(\psi_m(x)\psi_n(y)^*\big)=0 \text{ whenever  $r\cdot m>0$ or $r\cdot n>0$. }
	\end{align}	
\end{lemma}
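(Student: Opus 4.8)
The plan is to carry out the whole argument on the spanning elements $\psi_m(x)\psi_n(y)^*$, on which the dynamics acts diagonally: from $\alpha_t=\gamma_{e^{itr}}$ we get $\alpha_t\big(\psi_m(x)\psi_n(y)^*\big)=e^{itr\cdot(m-n)}\psi_m(x)\psi_n(y)^*$, which extends to the entire function $z\mapsto e^{izr\cdot(m-n)}\psi_m(x)\psi_n(y)^*$. So each such element is analytic, and by Lemma~\ref{application} (which closes them under products) together with $\NT(X)=\clsp\{\psi_p(x)\psi_q(y)^*\}$ they span a dense $*$-subalgebra. Hence, by the reduction recorded in the discussion of KMS states (following \cite[page~19]{LR}), it suffices to check the defining ground-state condition, that $z\mapsto\phi(a\alpha_z(b))$ be bounded on the upper half-plane, only for $a,b$ of this form.

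For the forward implication I would assume $\phi$ is a ground state, fix $x\in X_m$ and $y\in X_n$, and feed the analytic elements $a=\psi_m(x)$, $b=\psi_n(y)^*$ into the boundedness requirement. Since $\alpha_z(\psi_n(y)^*)=e^{-izr\cdot n}\psi_n(y)^*$, the function
\[
z\longmapsto \phi\big(\psi_m(x)\alpha_z(\psi_n(y)^*)\big)=e^{-izr\cdot n}\,\phi\big(\psi_m(x)\psi_n(y)^*\big)
\]
has modulus $e^{t\,r\cdot n}\,\big|\phi\big(\psi_m(x)\psi_n(y)^*\big)\big|$ at $z=it$, which is unbounded as $t\to+\infty$ whenever $r\cdot n>0$. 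Boundedness therefore forces $\phi\big(\psi_m(x)\psi_n(y)^*\big)=0$ when $r\cdot n>0$. Running the identical argument with $a=\psi_n(y)$ and $b=\psi_m(x)^*$ gives $\phi\big(\psi_n(y)\psi_m(x)^*\big)=0$ when $r\cdot m>0$, and then $\phi(c^*)=\overline{\phi(c)}$ transfers this to $\phi\big(\psi_m(x)\psi_n(y)^*\big)=0$. Together these are exactly \eqref{groundformula1}.

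For the converse I would assume \eqref{groundformula1} and take spanning elements $a=\psi_m(x)\psi_n(y)^*$ and $b=\psi_p(s)\psi_q(t)^*$, so that $\phi(a\alpha_z(b))=e^{izr\cdot(p-q)}\phi(ab)$. The goal then collapses to proving $r\cdot(p-q)\ge 0$ whenever $\phi(ab)\ne 0$, for then $|e^{izr\cdot(p-q)}|\le 1$ throughout the upper half-plane and boundedness is immediate. This is where Lemma~\ref{application} does the real work: it writes $ab$ as a finite sum of terms $\psi_{m+p-n\wedge p}(\xi_{ij})\psi_{n+q-n\wedge p}(\eta_{ij})^*$ sitting in fixed fibres. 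By \eqref{groundformula1} every such term vanishes unless $r\cdot(m+p-n\wedge p)=0$ and $r\cdot(n+q-n\wedge p)=0$; since $r$ has strictly positive coordinates and both multi-indices are $\geq 0$, a nonzero $\phi(ab)$ forces each index to be $0$. In particular $n+q-n\wedge p=0$ together with $n\wedge p\le n$ gives $q=0$ (and $n\le p$), hence $p-q=p\ge 0$ and $r\cdot(p-q)\ge 0$, as required.

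The only genuinely delicate point is the converse, where I must convert a statement about $\phi$ on the single product $ab$ into control of the exponent $r\cdot(p-q)$ governing growth in $z$. The exact multiplication formula of Lemma~\ref{application} is precisely what makes this possible, reducing the whole matter to the elementary combinatorics of the multi-indices $m+p-n\wedge p$ and $n+q-n\wedge p$ and the positivity of $r$; the rest is routine.
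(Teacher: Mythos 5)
Your proposal is correct and follows essentially the same route as the paper: the forward direction tests the boundedness condition directly on the analytic generators $\psi_m(x)$ and $\psi_n(y)^*$ (with conjugate symmetry handling the $r\cdot m>0$ case), and the converse uses Lemma~\ref{application} to rewrite $\psi_m(x)\psi_n(y)^*\psi_p(s)\psi_q(t)^*$ in the fibres $m+p-n\wedge p$ and $n+q-n\wedge p$, so that \eqref{groundformula1} and positivity of $r$ force $q=0$ and hence a non-growing exponent $e^{-br\cdot(p-q)}$ on the upper half-plane. This is precisely the paper's argument, down to the same multi-index combinatorics.
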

\begin{proof}
	First notice that for every state $\phi$, $a+ib\in \C$ and $m,n,p,q\in \N^k$,  the definition of $\alpha$ implies that
	\begin{align}\label{groundformula2}
	\phi\big(\psi_m(x)\psi_n(y)^*\alpha_{a+ib}\big(\psi_p(s)\psi_q(t)^*\big)\big)\notag&
	=\big|e^{i(a+ib)r\cdot(p-q)}\phi\big(\psi_m(x)\psi_n(y)^*\psi_p(s)\psi_q(t)^*\big)\big|\\
	&=e^{-br\cdot(p-q)}\big|\phi\big(\psi_m(x)\psi_n(y)^*\psi_p(s)\psi_q(t)^*\big)\big|.
	\end{align}
	Now suppose that $\phi$ is a ground  state. Then
	\[ \big|\phi\big(\psi_m(x)\alpha_{a+ib}\big(\psi_n(y)^*\big)\big)\big|=e^{br\cdot n}\big|\phi\big(\psi_m(x)\psi_n(y)^*\big)\big|\]
	is bounded on the upper half plane $b>0$. Thus   $\phi\big(\psi_m(x)\psi_n(y)^*\big)=0$ whenever
	$r\cdot n>0$. Since
	$\phi\big(\psi_n(y)\psi_m(x)^*\big)=\overline{\phi\big(\psi_m(x)\psi_n(y)^*\big)}$, a symmetric calculation shows that $\phi\big(\psi_n(y)\psi_m(x)^*\big)=0$ whenever $r\cdot m>0$.
	
	Next suppose that $\phi$ satisfies \eqref{groundformula1}.   It follows from Lemma~\ref{application} that  there exist  
	$\{\xi_{i,j}:1\leq i,j\leq d\}\subset X_{m+p-n\wedge p}$ and $\{\eta_{i,j}:1\leq i,j\leq d\}\subset X_{q+n-n\wedge p}$ such that
	\begin{align*}
	\psi_m(x)\psi_n(y)^*\psi_p(s)\psi_q(t)^*
	=\sum_{ i,j=1}^d\psi_{m+p-n\wedge p}(\xi_{i,j})\psi_{q+n-n\wedge p}(\eta_{i,j})^*.
	\end{align*}
	Putting this in \eqref{groundformula2}, we have
	\begin{align*}
	\phi\big(\psi_m(x)\psi_n(y)^*\alpha_{a+ib}\big(\psi_p(s)\psi_q(t)^*\big)\big)
	&=e^{-br\cdot(p-q)}\Big|\phi\Big(\sum_{ i,j=1}^d\psi_{m+p-n\wedge p}(\xi_{i,j})\psi_{q+n-n\wedge p}(\eta_{i,j})^*\Big)\Big|.
	\end{align*}
	By assumption  \eqref{groundformula1}, this is zero (consequently is bounded) unless
	$r\cdot (m+p-n\wedge p)=0=r\cdot(q+n-n\wedge p).$
	So  suppose that $r\cdot (m+p-n\wedge p)=0=r\cdot(q+n-n\wedge p)$. Since $r\in(0,\infty)^k$, it follows that
	$ m+p-n\wedge p=0=q+n-n\wedge p.$
	Then
	\begin{align*}
	\phi\big(\psi_m(x)\psi_n(y)^*\alpha_{a+ib}\big(\psi_p(s)\psi_q(t)^*\big)\big)
	&=e^{-br\cdot (p-q)}\Big|\sum_{i,j=1}^ d\phi\big(\psi_{0}\big(\langle \eta_{i,j},\xi_{i,j}\rangle\big)\big)\Big|.
	\end{align*}
	Notice that  $q$ and $n-n\wedge p$ are both positive.   Then  $q+n-n\wedge p=0$ implies that
	$q=0.$ Now we have
	\begin{align*}
	\phi\big(\psi_m(x)\psi_n(y)^*\alpha_{a+ib}\big(\psi_p(s)\psi_q(t)^*\big)\big)
	&=e^{-br\cdot p}\Big|\sum_{i,j=1}^d\phi\big(\psi_{0}\big(\langle \eta_{i,j},\xi_{i,j}\rangle\big)\big)\Big|.
	\end{align*}
	Thus $\phi$  is bounded on the upper half plane $b>0$, and hence it is a ground state.
\end{proof}

\begin{proof}[Proof of Proposition~\ref{groundprop}]
	Suppose that $\varepsilon$ is a probability measure on $Z$. For each $1\leq i\leq k$, let $\beta_{c_i}$ be as in \eqref{criticalpoint}. Choose a  sequence $\{\beta_j\}$ such that $\beta_j\rightarrow \infty$ and each $\beta_j > \max_{i}r_i^{-1}\beta_{c_i}$.  For each $\beta_j$, let $f_{\beta_j}(z)$ be the function
	in Proposition~\ref{series}(a) and set $K_j:=\int f_{\beta_j}\,d\varepsilon$. Then $K_j$ belongs to $[1,\infty)$, and $\varepsilon_j:=K_j^{-1}\varepsilon$ satisfies $\int f_{\beta_j}\,d\varepsilon_j=1$.
	Now part $(a)$ of Theorem~\ref{theorem1} gives  a KMS$_{\beta_j}$ state  $\phi_{\varepsilon_j}$ on $\big(\NT(X),\alpha\big)$.  Since  $\{\phi_{\varepsilon_j}\}$ is a sequence  in the compact unit ball of $C(Z)^*$, by passing to a subsequence and relabeling, we may assume that 
	$\phi_{\varepsilon_j}\rightarrow \phi_\varepsilon$. Then $\phi_\varepsilon$ is  a KMS$_\infty$ state.

	We now show that $\phi_\varepsilon$ satisfies \eqref{groundformula3}. For each $\phi_{\varepsilon_j}$, we have
	\[\phi_{\varepsilon_j}\big(\psi_m(x)\psi_n(y)^*\big)=\delta_{m,n}e^{-\beta_j r\cdot n}\int\langle y,x\rangle\,d\varepsilon_j\,\text{ for all $m,n\in \N^k$}.\]	
	Thus $\phi_{\varepsilon_j}\big(\psi_m(x)\psi_n(y)^*\big)=0$ for $m\neq n$ and hence $\phi_\varepsilon\big(\psi_m(x)\psi_n(y)^*\big)=0$ if $m\neq n$.
	So we suppose that  $m=n$. If $n\neq0$, then $r\in (0,\infty)^k$ implies that $e^{-\beta_j r\cdot n}\rightarrow 0$, and
	again $\phi_\varepsilon\big(\psi_m(x)\psi_n(y)^*\big)=\lim_{j\rightarrow \infty} \phi_{\varepsilon_j}\big(\psi_m(x)\psi_n(y)^*\big)=0$.
	So we assume that  $m=n=0$.
	
	We aim to show that $K_j\rightarrow 1$ when $j\rightarrow \infty$. Fix $z\in Z$ and let $f_{\beta_j}(z)=\sum_{p\in \N^k}e^{-\beta_j r\cdot p}|h^{-p}(z)|$ as in Proposition~\ref{series}(a).  For each $p\in \N^k$ let	
	\begin{align*}
	g(p)=\begin{cases}
	1  &\text{if $p=0$ }\\
	0  &\text{if $p\neq0$ }.\\
	\end{cases}
	\end{align*}
	Then  $e^{-\beta_j r\cdot p}|h^{-p}(z)|\rightarrow g(p)$ as $j\rightarrow \infty$. 			
Notice that for each $j$, $e^{-\beta_j r\cdot p}|h^{-p}(z)|$ is dominated by $e^{-\beta_0 r\cdot p}|h^{-p}(z)|$. The dominated convergence theorem implies that
	\[f_{\beta_j}(z)=\sum_{p\in \N^k}e^{-\beta_j r\cdot p}|h^{-p}(z)|\rightarrow \sum_{p\in \N^k}g(p)=1 \qquad \text{as }j\rightarrow \infty.\]
		Also notice that   each  $f_{\beta_j}$ is dominated by $f_{\beta_0}$ and $\varepsilon$ is a probability measure. Then  another application of the  dominated convergence theorem implies that
	\[K_j=\int f_{\beta_j}\,d\varepsilon\rightarrow \int1\,d\varepsilon=1\qquad \text{as }j\rightarrow \infty.\]	
 Next we compute using the formula \eqref{equkms} for $\phi_{\varepsilon_j}$:
	\begin{align*}
	\phi_\varepsilon\big(\psi_m(x)\psi_n(y)^*\big)&=\lim_{j\rightarrow \infty} \phi_{\varepsilon_j}\big(\psi_m(x)\psi_n(y)^*\big)
	=\lim_{j\rightarrow \infty}\int\langle y,x\rangle\,d\varepsilon_j.
	\end{align*}
Since $\varepsilon_j=K_j^{-1}\varepsilon$,	
	\begin{align*}
	\phi_\varepsilon\big(\psi_m(x)\psi_n(y)^*\big)&=\lim_{j\rightarrow \infty}
	{K_j}^{-1}\int \langle y,x\rangle(z)\,d\varepsilon(z)=
	\int \langle y,x\rangle(z)\,d\varepsilon(z).
	\end{align*}
	Thus $\phi_\epsilon$ satisfies \eqref{groundformula3}.
	Since $\phi_\varepsilon\big(\psi_m(x)\psi_n(y)^*\big)$ vanishes for all $m\neq 0$ or $n\neq 0$, it also does for $r\cdot m\neq0$ or $ r\cdot n\neq0$. Then Lemma~\ref{groundlemma} says that $\phi_\varepsilon$ is a ground state.
	
	Next let $\phi$ be a ground state and suppose that $\varepsilon$ is the probability measure satisfying $\phi(\psi_0(a))=\int a\,d\varepsilon$ for all $a\in A$. Then the formulas \eqref{groundformula1} and  \eqref{groundformula3} for $\phi$ and $\phi_\varepsilon$  imply that $\phi=\phi_\varepsilon$. Thus $\varepsilon\mapsto \phi_\varepsilon$ maps the simplex of the probability measures of $Z$ onto the ground states, and it is clearly affine and injective. Since each $\phi_\varepsilon$ is by construction a KMS$_\infty$ state, it follows that every ground state is a KMS$_\infty$ state.
\end{proof}

\section{Shifts on the infinite-path spaces of $1$-coaligned $k$-graphs }\label{sec9}

In this section we apply our results to a finite $k$-graph $\Lambda$ with no sources. Its infinite path space $\Lambda^\infty$ is then compact. If $\Lambda$ is $1$-coaligned (see below), then the shift maps on $\Lambda^\infty$ $*$-commute.  In this case, our main results say that the Toeplitz algebra $\TT C^*(\Lambda)$ of $\Lambda$ is isomorphic to a subalgebra of  the Nica--Toeplitz algebra $\NT(X(\Lambda^\infty))$ of $X(\Lambda^\infty))$, and that every KMS$_\beta$ state of $\TT C^*(\Lambda)$ is the restriction of  a  KMS$_\beta$ state of $\NT(X(\Lambda^\infty))$. We start with some background on $k$-graphs.

Let $k\geq 1$. 	Suppose that  $\Lambda$ is a $k$-graph with vertex set $\Lambda^0$ and degree map $d:\Lambda\rightarrow\N^k$ in the sense of  \cite{KP}. 
For any $n\in \N^k$, we write $\Lambda^n:=\{\lambda\in \Lambda^*: d(\lambda)=n\}$.  All $k$-graphs considered here are finite in the sense that
 $\Lambda^n$ is finite for all $n\in \N^k$. Given $v,w\in \Lambda^0$,   $v\Lambda^n w$ denotes  $\{\lambda\in \Lambda^n: r(\lambda)=v \text { and } s(\lambda)=w\}$. We  say $\Lambda$ has  \textit{no sinks} if $\Lambda^n v\neq\emptyset$ for every $v\in \Lambda^0$ and $n\in \N^k$. Similarly,  $\Lambda$ has   \textit{no sources} if $v\Lambda^n\neq\emptyset$ for every $v\in \Lambda^0$ and  $n\in \N^k$.
For $\mu,\nu\in \Lambda$,  we write 
\[\Lambda^{\min}(\mu,\nu):=\{(\xi,\eta)\in \Lambda\times \Lambda: \mu\xi=\nu \eta \text{ and } d(\mu\xi)=d(\mu)\vee d(\nu)\}.\]  

Let  $\Omega_k:=\{(m,n)\in \N^k\times \N^k:m\leq n\}$. The set $\Omega_k$ becomes a $k$-graph with  $r(m,n)=(m,m), s(m,n)=(n,n),$  $(m,n)(n,p)=(m,p)$ and
$d(m,n)=n-m$. We identify $\Omega_k^0$ with $\N^k$ by $(m,m)\mapsto m$.
We call 
\[\Lambda^\infty:=\{z:\Omega_k\rightarrow \Lambda: \text{ $z$ is a functor intertwining  the degree maps}\}\]
the \textit{infinite-path space}  of $\Lambda$. 
For $p\in \N^k$, the shift map $\sigma^p:\Lambda^\infty\rightarrow \Lambda^\infty $ is defined by
$\sigma^p(z)(m,n)= z(m+p,n+p)$ for all $z\in \Lambda^\infty$ and $(m,n)\in \Omega_k$. Clearly $\sigma^p\circ \sigma^q=\sigma^q\circ \sigma^p$ for $p,q\in \N^k$. Observe that   $z=z(0,p)\sigma^p(z)$ for  $z\in \Lambda^\infty$ and $p\in \N^k$. 

For each $\lambda\in \Lambda$, let $Z(\lambda):=\{ z\in \Lambda^\infty: z(0, d(\lambda))=\lambda \}.$ Lemma~2.6 of \cite{KP}
says that $\Lambda^\infty$ is compact in the topology which has $\{Z(\lambda): \lambda\in \Lambda\}$ as a basis. Also for each $p\in \N^k$, the shift map $\sigma^p$ is a local homeomorphism on $\Lambda^\infty$ (see \cite[Remark 2.5]{KP}). 

Following \cite{RS,aHLRS}, a \textit{Toeplitz-Cuntz-Krieger $\Lambda$-family} in a $C^*$-algebra $B$ is a set  of partial isometries $\{S_\lambda: \lambda\in \Lambda\}$ such that
	\begin{enumerate}[(TCK1)]
		\item $\{S_v: v\in \Lambda^0\}$ is a set of mutually orthogonal projections,
		\item $S_\lambda S_\mu=S_{\lambda \mu}$ whenever $s(\lambda)=r(\mu)$,
		\item $S_\lambda^*S_\lambda=S_{s(\lambda)}$ for all $\lambda$,
		\item  $S_{v}\geq \sum_{\lambda\in v\Lambda^n}S_\lambda S_\lambda^*$ for all $v\in \Lambda^0$ and $n\in \N^k$, and 
		\item $S_\mu^*S_\nu=\sum_{(\xi,\eta)\in\Lambda^{\min}(\mu,\nu)} S_\xi S_\eta ^*$ for all $\mu,\nu\in \Lambda$. 
			\end{enumerate}
				We interpret empty sums as $0$. 	A Toeplitz-Cuntz-Krieger  $\Lambda$-family $\{S_\lambda: \lambda\in \Lambda\}$  is a \textit{Cuntz-Krieger $\Lambda$-family} if we also have 
	\begin{itemize}	
	 \item[(CK)] $\quad  S_{v}= \sum_{\lambda\in v\Lambda^n}S_\lambda S_\lambda^*$ for all $v\in \Lambda^0$ and $n\in \N^k$.
	\end{itemize}		
	   Lemma~3.1 of \cite{KP} says that  (TCK1)$-$(TCK3) together with (CK)	implies (TCK5).

		The Toeplitz algebra $\TT C^*(\Lambda)$ is  generated by a universal Toeplitz-Cuntz-Krieger $\Lambda$-family $\{s_\lambda: \lambda\in \Lambda\}$.  
	The Cuntz-Krieger algebra  $C^*(\Lambda)$ is the  quotient of $\TT C^*(\Lambda)$ by the ideal 
	$\langle s_{v}- \sum_{\lambda\in v\Lambda^n}s_\lambda s_\lambda^*:v\in \Lambda^0\rangle.$
	
	There is a strongly continuous \textit{gauge action} $\tilde{\gamma}:\T^k\rightarrow\TT C^*(\Lambda)$ such that $\tilde{\gamma}_z(s_\lambda)=z^{d(\lambda)}s_\lambda$. 
	Since $\tilde{\gamma}$ fixes the kernel of the quotient map, it induces a natural gauge action of $\T^k$ on $C^*(\Lambda)$. 
	
The next lemma  shows that it suffices to check (TCK5) for a subset of $\Lambda\times \Lambda$.
\begin{lemma}\label{remark62}
	Let $\Lambda$ be a finite $k$-graph. Suppose that $\{S_\lambda:
	\lambda\in \Lambda\}$ is a set of partial isometries	in a $C^*$-algebra $B$ which  satisfies (TCK1)$-$(TCK3). Suppose that for all $\mu,\nu\in \Lambda$ with $d(\mu)\wedge d(\nu)=0$ we have $S_\mu^*S_\nu=\sum_{(\xi,\eta)\in\Lambda^{\min}(\mu,\nu)} S_\xi S_\eta ^*$. Then $\{S_\lambda: \lambda\in \Lambda\}$ satisfies (TCK5).
\end{lemma}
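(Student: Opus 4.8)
The plan is to reduce the general relation (TCK5) to the disjoint case that we are permitted to assume, by splitting off the common initial segments of $\mu$ and $\nu$. First I would dispose of the trivial case: if $r(\mu)\neq r(\nu)$ then $S_\mu^*S_\nu=S_\mu^*S_{r(\mu)}S_{r(\nu)}S_\nu=0$ by (TCK1)--(TCK2), while $\Lambda^{\min}(\mu,\nu)=\emptyset$, so both sides vanish. So assume $r(\mu)=r(\nu)=:v$. Put $\ell:=d(\mu)\wedge d(\nu)$ and use the factorisation property of $\Lambda$ to write $\mu=\mu'\mu''$ and $\nu=\nu'\nu''$ with $d(\mu')=d(\nu')=\ell$ and $d(\mu'')\wedge d(\nu'')=0$; then (TCK2) gives $S_\mu^*S_\nu=S_{\mu''}^*\,(S_{\mu'}^*S_{\nu'})\,S_{\nu''}$.

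The two facts I would then establish are a bookkeeping identity and a cancellation identity. For the bookkeeping identity, I claim $\Lambda^{\min}(\mu,\nu)=\Lambda^{\min}(\mu'',\nu'')$ when $\mu'=\nu'$, and $\Lambda^{\min}(\mu,\nu)=\emptyset$ otherwise: any $(\xi,\eta)$ with $\mu\xi=\nu\eta$ and $d(\mu\xi)=d(\mu)\vee d(\nu)$ has, by uniqueness of factorisations, equal degree-$\ell$ initial segments on the two sides, which forces $\mu'=\nu'$, and then left-cancelling $\mu'$ identifies the pair with an element of $\Lambda^{\min}(\mu'',\nu'')$ (and conversely). The cancellation identity is the same-degree relation $S_{\mu'}^*S_{\nu'}=\delta_{\mu',\nu'}S_{s(\mu')}$. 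Granting both, the middle factor collapses and, since $d(\mu'')\wedge d(\nu'')=0$, the disjoint hypothesis applies to $S_{\mu''}^*S_{\nu''}$:
\[
S_\mu^*S_\nu=\delta_{\mu',\nu'}\,S_{\mu''}^*S_{\nu''}=\delta_{\mu',\nu'}\!\!\sum_{(\xi,\eta)\in\Lambda^{\min}(\mu'',\nu'')}\!\!\!S_\xi S_\eta^*=\sum_{(\xi,\eta)\in\Lambda^{\min}(\mu,\nu)}S_\xi S_\eta^*,
\]
which is exactly (TCK5).

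It remains to prove the same-degree cancellation, and this is where the real work sits. I would argue by induction on $|\ell|$. The base case $\ell=0$ is immediate from (TCK1), since then $\mu',\nu'$ are vertices. For the inductive step, choose $i$ with $\ell_i>0$ and factor $\mu'=a\tilde\mu$ and $\nu'=b\tilde\nu$ with $a,b\in v\Lambda^{e_i}$; then $S_{\mu'}^*S_{\nu'}=S_{\tilde\mu}^*\,(S_a^*S_b)\,S_{\tilde\nu}$, and everything reduces to the single-edge identity $S_a^*S_b=\delta_{a,b}S_{s(a)}$ for $a,b\in v\Lambda^{e_i}$. Indeed, when $a=b$ the middle factor becomes $S_{s(a)}$ and the induction hypothesis applied to $S_{\tilde\mu}^*S_{\tilde\nu}$ finishes the step, while when $a\neq b$ both $S_{\mu'}^*S_{\nu'}$ and $\delta_{\mu',\nu'}$ vanish. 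To prove the single-edge identity for $a\neq b$ I would use that the range projections $S_aS_a^*$ and $S_bS_b^*$ are mutually orthogonal: they are subprojections of $S_v$ whose sum is dominated by $S_v$ (this is the range relation $\sum_{c\in v\Lambda^{e_i}}S_cS_c^*\le S_v$), and two projections $P,Q$ with $P+Q\le S_v\le 1$ satisfy $QP=0$ because $Q\le 1-P$ gives $PQ=0$; writing $S_a^*=S_a^*(S_aS_a^*)$ and $S_b=(S_bS_b^*)S_b$ then yields $S_a^*S_b=S_a^*(S_aS_a^*)(S_bS_b^*)S_b=0$.

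The main obstacle is precisely this single-edge orthogonality. The factorisation reduction and the bookkeeping identity are routine once one has the $k$-graph factorisation property and left-cancellation in hand, but the relations (TCK1)--(TCK3) together with only the disjoint case of (TCK5) cannot by themselves separate two distinct edges of the \emph{same} degree $e_i$ (their overlap $e_i\wedge e_i=e_i$ is never zero, so the disjoint hypothesis never reaches them). The essential extra input is therefore the mutual orthogonality of the range projections $\{S_cS_c^*:c\in v\Lambda^{e_i}\}$, i.e.\ the range inequality (TCK4); this is the one place the argument genuinely uses that the family is a Toeplitz--Cuntz--Krieger family rather than merely satisfying (TCK1)--(TCK3).
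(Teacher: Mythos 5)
Your reduction is exactly the one in the paper: factor $\mu=\mu'\mu''$ and $\nu=\nu'\nu''$ with $d(\mu')=d(\nu')=d(\mu)\wedge d(\nu)$, show that $\Lambda^{\min}(\mu,\nu)$ equals $\Lambda^{\min}(\mu'',\nu'')$ when $\mu'=\nu'$ and is empty otherwise, collapse the middle factor via $S_{\mu'}^*S_{\nu'}=\delta_{\mu',\nu'}S_{s(\mu')}$, and apply the disjoint-degree hypothesis to $S_{\mu''}^*S_{\nu''}$. Where you diverge is the cancellation identity, and there your argument, as written, does not prove the stated lemma: you derive the single-edge orthogonality from the range inequality (TCK4), but (TCK4) is not among the hypotheses --- the lemma grants only (TCK1)--(TCK3) and the disjoint case of (TCK5). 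Moreover, in the paper's application (Proposition~\ref{example}(a)) the lemma is invoked precisely so that (TCK5) can be established \emph{before} (TCK4), which is then deduced from (TCK5); importing (TCK4) into the lemma would make that argument circular.

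That said, your diagnosis of where the difficulty sits is correct, and it exposes a genuine gap in the paper's own proof, which asserts $S_{\mu'}^*S_{\nu'}=\delta_{\mu',\nu'}S_{s(\mu')}$ citing only (TCK2), (TCK3) and $S_{r(\lambda)}S_\lambda=S_\lambda$. As you observe, these hypotheses never reach two distinct paths of the same degree, and in fact the lemma as stated is false: take the $2$-graph with one vertex, $\Lambda^{e_1}=\{e\}$, $\Lambda^{e_2}=\{f,g\}$, with commuting factorisations $ef=fe$ and $eg=ge$, and set $S_\lambda=1$ for every $\lambda\in\Lambda$. Then (TCK1)--(TCK3) hold, and for every pair with $d(\mu)\wedge d(\nu)=0$ the set $\Lambda^{\min}(\mu,\nu)$ is a singleton, so the hypothesised relation reads $1=1$; yet $S_f^*S_g=1\neq 0=\sum_{(\xi,\eta)\in\Lambda^{\min}(f,g)}S_\xi S_\eta^*$, violating (TCK5). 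So some extra hypothesis is unavoidable. The repair consistent with how the lemma is used is not (TCK4) but the same-degree case of (TCK5), namely $S_\mu^*S_\nu=\delta_{\mu,\nu}S_{s(\mu)}$ whenever $d(\mu)=d(\nu)$: in the application this is immediate from (T2), since distinct cylinder sets $Z(\mu)$ and $Z(\nu)$ of equal degree are disjoint, so that $\langle\chi_{Z(\mu)},\chi_{Z(\nu)}\rangle=\delta_{\mu,\nu}\chi_{Z(s(\mu))}$. With that hypothesis added, your induction becomes unnecessary (the cancellation is assumed outright), and the remainder of your argument, which coincides with the paper's, goes through.
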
	

\begin{proof}
	Fix $\mu,\nu\in \Lambda$.  Factor $\mu=\mu'\mu''$ and $\nu=\nu'\nu''$ such that $d(\mu')=d(\nu')=d(\mu)\wedge d(\nu)$. Notice that
	\begin{align}\label{usingprim}
	 d(\mu'')=d(\mu)-d(\mu)\wedge d(\nu),\, 
	& d(\nu'')=d(\nu)-d(\mu)\wedge d(\nu)\text{ and } d(\mu'')\wedge d(\nu'')=0.
	\end{align}
	Now using (TCK2), (TCK3) and the identity $S_{r(\lambda)}S_{\lambda}=S_{\lambda}$, we have
	\begin{align*}
	S_\mu^*S_\nu\notag&=S_{\mu''}^*S_{\mu'}^*S_{\nu'}S_{\nu''}
=S_{\mu''}^*\delta_{\mu',\nu'}S_{s(\mu')}S_{\nu''}\\
	\notag&=\delta_{\mu',\nu'}S_{\mu''}^*S_{r(\mu'')}S_{\nu''}&\text{since } s(\mu')=r(\mu'')\\
&=\delta_{\mu',\nu'}S_{\mu''}^*S_{\nu''}.
	\end{align*}
	Since   $d(\mu'')\wedge d(\nu'')=0$, applying (TCK5) for $\mu'', \nu''$ gives
	\begin{align*}
	S_\mu^*S_\nu=& \delta_{\mu',\nu'}\sum_{(\xi,\eta)\in\Lambda^{\min}(\mu'',\nu'')} S_\xi S_\eta ^*.
	\end{align*}	
	Now it suffices to prove that 
	\begin{align*}
	\Big(\mu'=\nu'\, \text { and } \,(\xi,\eta)\in\Lambda^{\min}(\mu'',\nu'')\Big)&\Longleftrightarrow (\xi,\eta)\in\Lambda^{\min}(\mu,\nu).
	\end{align*}	
	To see this, suppose that  $\mu'=\nu'$ and $(\xi,\eta)\in\Lambda^{\min}(\mu'',\nu'')$.  Then $\mu''\xi=\nu''\eta $ implies that $\mu\xi=\nu\eta$.
	Since  $d(\mu''\xi)=d(\mu'')\vee d(\nu'')$ and $d(\mu'')\wedge d(\nu'')=0$, it follows  
	that $d(\mu''\xi)=d(\mu'')+ d(\nu'')$. This says    $d(\xi)=d(\nu'')$. Now  \eqref{usingprim} implies that
	\[d(\mu\xi)=d(\mu)+d(\nu'')=d(\mu)+d(\nu)-d(\mu)\wedge d(\nu)=d(\mu)\vee d(\nu).\]
	Thus $(\xi,\eta)\in\Lambda^{\min}(\mu,\nu)$.
	
	Next let $(\xi,\eta)\in\Lambda^{\min}(\mu,\nu)$. Since $\mu\xi=\nu\eta$, the factorisation property implies that $\mu'=\nu'$. Notice that
	\[d(\mu''\xi)=d(\mu\xi)-d(\mu')=d(\mu)\vee d(\nu)-d(\mu')=d(\mu)+d(\nu)-d(\mu)\wedge d(\nu)-d(\mu').\]
	Rearranging this and using \eqref{usingprim} we have
	\[d(\mu''\xi)=(d(\mu)-d(\mu)\wedge d(\nu))+(d(\nu)-d(\mu'))=d(\mu'')+d(\nu'').\]
	Thus $(\xi,\eta)\in\Lambda^{\min}(\mu'',\nu'')$.
\end{proof}

Following \cite[Definition~2.2]{MW}, we say 
	 a $k$-graph $\Lambda$ is \textit{$1$-coaligned} if for all $1\leq i\neq j\leq k$ and $(\lambda,\mu)\in \Lambda^{e_i}\times \Lambda^{e_j}$ with $s(\lambda)=s(\mu)$ there exists a unique pair $(\eta,\zeta)\in \Lambda^{e_j}\times \Lambda^{e_i}$ such that $\eta\lambda=\zeta\mu$.

The next lemma is contained in \cite[Theorem~2.3]{MW}.  Since  \cite{MW} has not been  published, we provide a brief  proof.
\begin{lemma}\label{shiftmaps}
	Let $\Lambda$ be a finite $1$-coaligned $k$-graph. Suppose that $0\leq i\neq j \leq k$. Then the shift maps $\sigma^{e_i}$ and $\sigma^{e_j}$ $*$-commute.
\end{lemma}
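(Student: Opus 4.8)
The plan is to verify the definition of $*$-commuting directly, leaning on the factorisation property of $\Lambda^\infty$ and the identity $z=z(0,p)\sigma^p(z)$ recorded just before the lemma. Since the text already notes that $\sigma^{e_i}\circ\sigma^{e_j}=\sigma^{e_j}\circ\sigma^{e_i}$, the two shifts do commute, so only the lifting-and-uniqueness property remains. First I would fix $x,y\in\Lambda^\infty$ with $\sigma^{e_i}(x)=\sigma^{e_j}(y)$ and set $w:=\sigma^{e_i}(x)=\sigma^{e_j}(y)$. Then I would decompose $x=\lambda w$ with $\lambda:=x(0,e_i)\in\Lambda^{e_i}$ and $y=\mu w$ with $\mu:=y(0,e_j)\in\Lambda^{e_j}$. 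Because $s(\lambda)=r(w)=s(\mu)$, the pair $(\lambda,\mu)\in\Lambda^{e_i}\times\Lambda^{e_j}$ satisfies the hypothesis of the $1$-coaligned condition (here $i\neq j$ is what makes $(\lambda,\mu)$ a legitimate input).

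Next I would apply the $1$-coaligned property to $(\lambda,\mu)$ to produce the unique pair $(\eta,\zeta)\in\Lambda^{e_j}\times\Lambda^{e_i}$ with $\eta\lambda=\zeta\mu$, and then define $z:=(\eta\lambda)w=(\zeta\mu)w$. This is a well-defined element of $\Lambda^\infty$, since $s(\eta\lambda)=s(\lambda)=r(w)$ so prepending the finite path $\eta\lambda$ to the infinite path $w$ yields an infinite path. Using associativity and the factorisation property, $z=\eta(\lambda w)=\eta x$ gives $z(0,e_j)=\eta$ and $\sigma^{e_j}(z)=x$, while $z=\zeta(\mu w)=\zeta y$ gives $z(0,e_i)=\zeta$ and $\sigma^{e_i}(z)=y$. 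Thus $z$ is a lift of the required form.

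For uniqueness, suppose $z'\in\Lambda^\infty$ also satisfies $\sigma^{e_j}(z')=x$ and $\sigma^{e_i}(z')=y$. Writing $\eta':=z'(0,e_j)\in\Lambda^{e_j}$ and $\zeta':=z'(0,e_i)\in\Lambda^{e_i}$, the two decompositions $z'=\eta' x=\eta'\lambda w$ and $z'=\zeta' y=\zeta'\mu w$ force the initial segment of degree $e_i+e_j$ to satisfy $z'(0,e_i+e_j)=\eta'\lambda=\zeta'\mu$. The uniqueness clause in the $1$-coaligned definition then yields $(\eta',\zeta')=(\eta,\zeta)$, and hence $z'=\eta\lambda w=z$. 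This establishes both existence and uniqueness, which is exactly the $*$-commuting condition for $\sigma^{e_i}$ and $\sigma^{e_j}$.

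The only genuinely delicate point is the bookkeeping with the factorisation property when reading off the initial segments of the various concatenations, in particular confirming that $\eta\lambda$ and $\zeta\mu$ both have degree $e_i+e_j$ and that the source/range vertices match up so that every composition written down is legal. Once $z$ is expressed as $\eta\lambda w=\zeta\mu w$, identifying $z(0,e_j)=\eta$, $z(0,e_i)=\zeta$, and the resulting shifts is routine, and I do not expect any substantive obstacle beyond carefully tracking which coordinate direction ($e_i$ versus $e_j$) each edge lives in.
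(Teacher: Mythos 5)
Your proof is correct and follows essentially the same route as the paper's: decompose the two given infinite paths as (degree-$e_i$ or $e_j$ edge)$\cdot$(common tail), apply the $1$-coaligned condition to the two initial edges to build the lift, and obtain uniqueness by factoring the degree-$(e_i+e_j)$ initial segment of any other lift in two ways and invoking the uniqueness clause of $1$-coalignedness. The only difference is notational bookkeeping, not substance.
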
	
\begin{proof}
	Let $w,z\in \Lambda^\infty$ such that
	$\sigma^{e_i}(z)=\sigma^{e_j}(w).$
	Notice  that $z=z(0,e_i)\sigma^{e_i}(z)$ and $w=w(0,e_j)\sigma^{e_j}(w)$.  It  follows that
	$z(0,e_i)$ and $w(0,e_j)$ have the same sources. Since $\Lambda $ is $1$-coaligned there exists a unique pair  $(\eta,\zeta)\in \Lambda^{e_j}\times \Lambda^{e_i}$ such that
	$\eta z(0,e_i)=\zeta w(0,e_j)=\lambda$, say.	Then  $x:=\lambda\sigma^{e_i}(z)\in \Lambda^\infty$ satisfies
	$\sigma^{e_j}(x)=z \text{ and } \sigma^{e_i}(x)=w$.
For the uniqueness, suppose that $y\in \Lambda^\infty$ also satisfies $\sigma^{e_j}(y)=z \text{ and } \sigma^{e_i}(y)=w$. Then $\mu:=y(0,e_i+e_j)$ satisfies $\mu(e_j,e_i+e_j)=y(e_j,e_i+e_j)=z(0,e_i)$ and $\mu(e_i,e_j+e_i)=w(0,e_j)$, and hence $\mu=\lambda$ by $1$-coalignedness. Thus $y=\mu \sigma^{e_i+e_j}(y)=\lambda\sigma^{e_i}(z)=x$, and $\sigma^{e_i}$ and $\sigma^{e_j}$ $*$-commute.
\end{proof}

Lemma~\ref{shiftmaps} makes it relatively easy to identify families of graphs that are $1$-coaligned. For example:

\begin{example}
Maloney and Willis showed in \cite[\S3]{MW} that \cite{PRW} provides many examples of $1$-coaligned $2$-graphs. Each graph $\Lambda$ in \cite{PRW} is determined by a set of ``basic data", and the condition on the basic data that is equivalent to $1$-coalignedness of $\Lambda$ also implies that $\Lambda$ is aperiodic (compare \cite[Corollary~3.4]{MW} and \cite[Theorem~5.2]{PRW}). Theorem~6.1 of \cite{PRW} then says further that $C^*(\Lambda)$ is simple, nuclear and purely infinite.
\end{example}

These examples are reassuring rather than unexpected: the whole point of \cite{PRW} was to find $2$-graphs such that subshifts on path space give dynamical systems of algebraic origin. One naturally wonders if there are other familiar examples. The next lemma helps resolve this for the important family of graphs with a single vertex.

\begin{lemma}\label{1coal}
Suppose that $\Lambda$ is a finite $2$-graph with one vertex.  For $f\in \Lambda^{e_2}$, define $\rho_f:\Lambda^{e_1}\to \Lambda^{e_1}$ by $\rho_f(e)=(ef)(e_2,e_1+e_2)$. Then $\Lambda$ is $1$-coaligned if and only if $\rho_f$ is a bijection for every $f\in \Lambda^{e_2}$.
\end{lemma}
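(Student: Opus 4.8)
The plan is to translate both the definition of $\rho_f$ and the $1$-coaligned condition into statements about the \emph{factorisation property} of the $2$-graph $\Lambda$, and then simply read off the equivalence. Since $\Lambda$ has a single vertex, the source--range matching conditions appearing in both definitions hold automatically, so I may ignore them throughout; and since $\Lambda$ is finite, $\Lambda^{e_1}$ is a finite set, so a self-map of $\Lambda^{e_1}$ is a bijection as soon as it is surjective (or injective).

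First I would record what $\rho_f$ means. For $e\in\Lambda^{e_1}$ and $f\in\Lambda^{e_2}$ the path $ef$ has degree $e_1+e_2$, and the factorisation property gives a unique expression $ef=f'e'$ with $f'\in\Lambda^{e_2}$ and $e'\in\Lambda^{e_1}$, namely $f'=(ef)(0,e_2)$ and $e'=(ef)(e_2,e_1+e_2)=\rho_f(e)$. Thus for all $\zeta\in\Lambda^{e_1}$, $\mu\in\Lambda^{e_2}$ and all $\eta\in\Lambda^{e_1}$, $\lambda\in\Lambda^{e_2}$ one has the identities
\[
\zeta\mu=(\zeta\mu)(0,e_2)\,\rho_\mu(\zeta),\qquad \eta\lambda=(\eta\lambda)(0,e_2)\,\rho_\lambda(\eta).
\]
The decisive observation is that an equation $\eta\lambda=\zeta\mu$ equating a $\Lambda^{e_2}\Lambda^{e_1}$-path to a $\Lambda^{e_1}\Lambda^{e_2}$-path is, by uniqueness of factorisation, \emph{exactly} the conjunction $\rho_\mu(\zeta)=\lambda$ and $\eta=(\zeta\mu)(0,e_2)$ (and symmetrically with the two colours interchanged).

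For the forward implication I would assume $\Lambda$ is $1$-coaligned and fix $f\in\Lambda^{e_2}$. Given any $\lambda\in\Lambda^{e_1}$, applying the $1$-coaligned condition (with $i=1$, $j=2$) to the pair $(\lambda,f)$ produces $(\eta,\zeta)\in\Lambda^{e_2}\times\Lambda^{e_1}$ with $\eta\lambda=\zeta f$; by the observation above this forces $\rho_f(\zeta)=\lambda$, so $\rho_f$ is surjective, hence bijective by finiteness. For the converse, assume every $\rho_f$ is a bijection and verify the $1$-coaligned condition for both ordered pairs. In the case $(i,j)=(1,2)$, given $(\lambda,\mu)\in\Lambda^{e_1}\times\Lambda^{e_2}$, bijectivity of $\rho_\mu$ yields a unique $\zeta$ with $\rho_\mu(\zeta)=\lambda$; putting $\eta:=(\zeta\mu)(0,e_2)$ gives $\eta\lambda=\zeta\mu$, and uniqueness of the pair follows from injectivity of $\rho_\mu$ together with the uniqueness of factorisation. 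The case $(i,j)=(2,1)$ is the mirror image: given $(\lambda,\mu)\in\Lambda^{e_2}\times\Lambda^{e_1}$, bijectivity of $\rho_\lambda$ (note $\lambda\in\Lambda^{e_2}$ here) gives a unique $\eta$ with $\rho_\lambda(\eta)=\mu$, and $\zeta:=(\eta\lambda)(0,e_2)$ completes the pair.

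I do not expect a serious obstacle; the argument is essentially bookkeeping around the factorisation property. The only point genuinely requiring care is reconciling the definition of $1$-coalignedness, which demands the condition for \emph{all} $i\neq j$, with the single family of maps $\rho_f$: one must check that both the $(1,2)$ and the $(2,1)$ instances reduce to the \emph{same} statement, namely that $\rho_f$ is a bijection for every $f\in\Lambda^{e_2}$. Making this symmetry explicit is what keeps the proof clean, and I would state it rather than leave it buried in the indices.
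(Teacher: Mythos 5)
Your proof is correct and follows essentially the same route as the paper's: both translate the mixed-colour equation $\eta\lambda=\zeta\mu$ via the factorisation property into a statement about $\rho_f$, and both use finiteness of $\Lambda^{e_1}$ to upgrade one-sided invertibility to bijectivity. The only cosmetic differences are that you extract surjectivity of $\rho_f$ from the \emph{existence} clause of $1$-coalignedness where the paper extracts injectivity from the \emph{uniqueness} clause, and that you make explicit the $(1,2)$/$(2,1)$ symmetry which the paper leaves implicit.
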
 

\begin{proof}
Suppose that $\Lambda$ is $1$-coaligned, and fix $f\in \Lambda^{e_2}$. Suppose that $e, e'\in \Lambda^{e_1}$ and that $\rho_f(e)=\rho_f(e')$. Then $(ef)(e_2, e_1+e_2)= (e'f)(e_2, e_1+e_2)=h$, say. Let  $l=(ef)(0, e_2)$ and $m=(e'f)(0, e_2)$. Then $l, m\in \Lambda^{e_2}$ such that $ef=lh$ and $e'f=mh$. But now $1$-coalignedness for the pair $(h,f)$ implies that $(l,e)=(m,e')$. Thus $e=e'$, and $\rho_f$ is one-to-one. Since $\rho_f:\Lambda^{e_1}\to \Lambda^{e_1}$ and $\Lambda^{e_1}$ is finite, it is also onto.

For the converse, suppose that $\rho_f$ is a bijection for all $f\in \Lambda^{e_2}$. Fix $(e, f)\in \Lambda^{e_1}\times  \Lambda^{e_2}$.  Since $\rho_f$ is a bijection, there exists unique $h\in \Lambda^{e_1}$ such that $\rho_f(h)=e$, that is, $(hf)(e_2, e_1+e_2)=e$.  Take $l=(hf)(0, e_2)$. Then $(l,h)\in\Lambda^{e_2}\times\Lambda^{e_1}$ satisfies $le=hf$. For uniqueness, suppose also that $(k,g)\in\Lambda^{e_2}\times\Lambda^{e_1}$ satisfies $ke=gf$. Then $\rho_f(g)=e=\rho_f(h)$ implies $g=h$. 
Now uniqueness of factorisation implies $l=k$. So $(l,h)$ is unique, and $\Lambda$ is $1$-coaligned.
\end{proof}

\begin{example}
Suppose that $\Lambda$ is a $2$-graph with a single vertex, $\Lambda^{e_1}=\{e,f\}$ and $\Lambda^{e_2}=\{g,h\}$, and factorisations 
\[
eg=he,\quad eh=hf,\quad fg=gf,\quad\text{and}\quad fh=ge.
\]
Then $\rho_h$ flips $e$ and $f$, and $\rho_g$ is the identity. So, either by inspection or by Lemma~\ref{1coal}, $\Lambda$ is $1$-coaligned.
\end{example}

	Let $\Lambda$ be a finite $1$-coaligned $k$-graph with no sinks. Then the shift maps  $\sigma^{e_1},\dots,\sigma^{e_k}$ are surjective , and they $*$-commute by Lemma~\ref{shiftmaps}. 
	We write $X(\Lambda^\infty)$ for the  product system associated to $\sigma^{e_1},\dots,\sigma^{e_k}$.
	We use $\psi$  for the universal Nica-covariant representation. We write $X_m(\Lambda^\infty)$ for the  fibre associated to $m\in \N^k$. We write $\varphi_m$ for the left action of $A$ on the fibre $X_m(\Lambda^\infty)$.
	Recall that the multiplication formula in $X(\Lambda^\infty)$ is\footnote{In previous sections we wrote the multiplication in terms of isomorphisms $\sigma$ between fibres, for example, $xy(z)=\sigma(x\otimes y)(z)$. Unfortunately, in this section we use the letter $\sigma$ for the shifts. Therefore we suppress $\sigma$ when  writing products.}
	\begin{align} \label{multiplicationformula}
	xy(z)=x(z)y(\sigma^m(z))\text{ for } x\in X_m, y\in X_n, z\in \Lambda^\infty.
	\end{align}
The next proposition is an analogue of \cite[Proposition~7.1]{AaHR}.
\begin{prop}\label{example}
	Let $\Lambda$ be a finite $1$-coaligned $k$-graph with no sinks or sources.  For  each $\lambda\in \Lambda$, 
	let $S_\lambda:=\psi_{d(\lambda)}(\chi_{Z(\lambda)})$. Then
	\begin{itemize}
		\item[(a)] The set $\{S_\lambda:\lambda\in \Lambda\}$ is a
		Toeplitz-Cuntz-Krieger $\Lambda$-family in $\NT(X(\Lambda^\infty))$. The  homomorphism $\pi_S:\TCL\rightarrow\NT(X(\Lambda^\infty))$ is injective and intertwines  the respective gauge actions of $\T^k$ (that is, $\pi_S\circ \tilde{\gamma}=\gamma \circ \pi_S$).
		\item[(b)] Let $q:\NT(X(\Lambda^\infty))\rightarrow \OO(X(\Lambda^\infty))$ be the quotient map as in Lemma~\ref{fixdef}. Then $\{q\circ S_\lambda:\lambda\in \Lambda\}$ is a Cuntz-Krieger $\Lambda$-family in $\OO(X(\Lambda^\infty))$. The corresponding homomorphism $\pi_{q\circ S}:C^*(\Lambda)\rightarrow\OO(X(\Lambda^\infty))$ is an  isomorphism and intertwines the respective gauge actions of $\T^k$.
	\end{itemize}	
\end{prop}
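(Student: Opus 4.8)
The plan is to verify that $\{S_\lambda\}$ satisfies the Toeplitz--Cuntz--Krieger relations (TCK1)--(TCK5) and then deduce injectivity from a gauge-invariant uniqueness theorem, using throughout that each fibre $X_m(\Lambda^\infty)$ is the graph correspondence $C(\Lambda^\infty)$ for the shift $\sigma^{m}$ together with the multiplication formula~\eqref{multiplicationformula}. Relations (TCK1)--(TCK3) are quick consequences of the bimodule operations and the axioms (T1)--(T3): the cylinder sets $Z(v)$ are disjoint, so the $\chi_{Z(v)}$ are mutually orthogonal projections in $C(\Lambda^\infty)$, giving (TCK1); a direct computation of the product $\chi_{Z(\lambda)}\chi_{Z(\mu)}$ in the product system shows it equals $\chi_{Z(\lambda\mu)}$ when $s(\lambda)=r(\mu)$, giving (TCK2); and evaluating $\langle\chi_{Z(\lambda)},\chi_{Z(\lambda)}\rangle$ shows it equals $\chi_{Z(s(\lambda))}$, giving (TCK3). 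For (TCK4) I would first record the observation that, for each $n$, the family $\{\chi_{Z(\lambda)}:\lambda\in\Lambda^n\}$ is a Parseval frame for $X_n(\Lambda^\infty)$ (the reconstruction formula collapses because $\sigma^{n}$ is injective on each $Z(\lambda)$ with $d(\lambda)\ge n$), and that $\sum_{\lambda\in v\Lambda^n}\Theta_{\chi_{Z(\lambda)},\chi_{Z(\lambda)}}=\varphi_n(\chi_{Z(v)})$. Hence $\sum_{\lambda\in v\Lambda^n}S_\lambda S_\lambda^*=\psi^{(n)}(\varphi_n(\chi_{Z(v)}))$, and positivity of $S_v-\sum_{\lambda\in v\Lambda^n}S_\lambda S_\lambda^*$ follows by checking in the faithful Fock representation that $\psi_0(\chi_{Z(v)})$ and $\psi^{(n)}(\varphi_n(\chi_{Z(v)}))$ agree on the summands $X_p$ with $p\ge n$, while only the (positive) first term contributes on the remaining summands.

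The relation (TCK5) is the technical heart, and I expect it to be the main obstacle. By Lemma~\ref{remark62} it suffices to treat $\mu,\nu$ with $d(\mu)\wedge d(\nu)=0$, which is exactly the hypothesis under which Proposition~\ref{solelprop}(b) applies to rewrite $S_\mu^* S_\nu=\psi_{d(\mu)}(\chi_{Z(\mu)})^*\psi_{d(\nu)}(\chi_{Z(\nu)})$ as a sum of terms $\psi\psi^*$. For the partition of unity required by that proposition I would take the characteristic functions $\{\chi_{Z(\alpha)}:\alpha\in\Lambda^{d(\mu)\vee d(\nu)}\}$, on whose supports both $\sigma^{d(\mu)}$ and $\sigma^{d(\nu)}$ are injective. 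The work is then to evaluate the resulting coefficient functions $\langle\chi_{Z(\mu)},\chi_{Z(\alpha)}\circ\sigma^{d(\nu)}\rangle\cdot\chi_{Z(\beta)}$ pointwise on $\Lambda^\infty$ and, after the reconstruction formulas, to recognise the surviving terms as exactly $\sum_{(\xi,\eta)\in\Lambda^{\min}(\mu,\nu)}S_\xi S_\eta^*$; here $d(\mu)\wedge d(\nu)=0$ forces $d(\xi)=d(\nu)$ and $d(\eta)=d(\mu)$, and the $1$-coalignedness (through the $*$-commuting property used to build the frames in Lemma~\ref{pframe}) is what makes the pairing collapse to the minimal common extensions $\mu\xi=\nu\eta$.

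With the relations in hand, the universal property of $\TCL$ produces $\pi_S$ with $\pi_S(s_\lambda)=S_\lambda$. Gauge-equivariance is immediate on generators, since $\gamma_z(S_\lambda)=\gamma_z(\psi_{d(\lambda)}(\chi_{Z(\lambda)}))=z^{d(\lambda)}S_\lambda$. For injectivity I would invoke the gauge-invariant uniqueness theorem for Toeplitz algebras of finite $k$-graphs: an equivariant representation is faithful once the ``gap projections'' $\prod_{i\in I}\big(S_v-\sum_{\lambda\in v\Lambda^{e_i}}S_\lambda S_\lambda^*\big)$ are nonzero for every $v\in\Lambda^0$ and $I\subseteq\{1,\dots,k\}$. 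Each such factor equals $\psi_0(\chi_{Z(v)})-\psi^{(e_i)}(\varphi_{e_i}(\chi_{Z(v)}))$, which in the Fock representation restricts to multiplication by $\chi_{Z(v)}$ on the summand $X_0=C(\Lambda^\infty)$; since $\Lambda$ has no sources, $Z(v)\neq\emptyset$, so the product is nonzero and $\pi_S$ is injective. This proves (a).

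For (b), the homomorphism $q$ of Lemma~\ref{fixdef} preserves (TCK1)--(TCK5), so it remains to check the Cuntz--Krieger relation (CK). But the (TCK4) computation gives $S_v-\sum_{\lambda\in v\Lambda^n}S_\lambda S_\lambda^*=\psi_0(\chi_{Z(v)})-\psi^{(n)}(\varphi_n(\chi_{Z(v)}))$, which is a generator of $\ker q$; hence $q$ annihilates it and $\{q\circ S_\lambda\}$ is a Cuntz--Krieger family, yielding $\pi_{q\circ S}$. Surjectivity follows from the Parseval-frame decomposition $\psi_m(x)=\sum_{\lambda\in\Lambda^m}S_\lambda\,\psi_0(\langle\chi_{Z(\lambda)},x\rangle)$ together with the identity $\Theta_{\chi_{Z(\lambda)},\chi_{Z(\lambda)}}=\varphi_{d(\lambda)}(\chi_{Z(\lambda)})$ (valid because $\sigma^{d(\lambda)}$ is injective on $Z(\lambda)$), which via Cuntz--Pimsner covariance gives $q\psi_0(\chi_{Z(\lambda)})=q(S_\lambda S_\lambda^*)$; as the $\chi_{Z(\lambda)}$ span a dense subalgebra of $C(\Lambda^\infty)$, every generator $q(\psi_m(x)\psi_p(y)^*)$ of $\OO(X(\Lambda^\infty))$ lies in the image. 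Finally $\pi_{q\circ S}$ is equivariant for the induced actions, and injectivity follows from the gauge-invariant uniqueness theorem for $C^*(\Lambda)$ once $q(S_v)=q(\psi_0(\chi_{Z(v)}))\neq0$; this last point I would obtain from the fact that, the left actions being injective, the coefficient algebra $C(\Lambda^\infty)$ embeds faithfully in $\OO(X(\Lambda^\infty))$ via $a\mapsto q(\psi_0(a))$.
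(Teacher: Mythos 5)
Most of your outline is sound, and in two places you deviate from the paper in ways that genuinely work: you obtain (TCK4) directly from positivity in the Fock representation (using $\sum_{\lambda\in v\Lambda^n}\Theta_{\chi_{Z(\lambda)},\chi_{Z(\lambda)}}=\varphi_n(\chi_{Z(v)})$ and the fact that the compression terms vanish on the summands $X_p$ with $p\ngeq n$), whereas the paper deduces (TCK4) from (TCK5); and your surjectivity argument in (b), via the frame decomposition $\psi_m(x)=\sum_{\lambda\in\Lambda^m}S_\lambda\,\psi_0(\langle\chi_{Z(\lambda)},x\rangle)$ and $q\circ\psi_0(\chi_{Z(\lambda)})=q(S_\lambda S_\lambda^*)$, is if anything cleaner than the paper's two-case computation. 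Your (TCK1)--(TCK3), the gauge-equivariance, both injectivity arguments (Fock representation plus the relevant gauge-invariant uniqueness theorems), and the (CK) verification all match the paper's proof in substance.

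The genuine gap is exactly where you predicted it: (TCK5). Your plan is to apply Proposition~\ref{solelprop}(b) with the single partition of unity $\{\chi_{Z(\alpha)}:\alpha\in\Lambda^{d(\mu)\vee d(\nu)}\}$ (note $d(\mu)\vee d(\nu)=d(\mu)+d(\nu)$ here) and then ``recognise the surviving terms'' as $\sum_{(\xi,\eta)\in\Lambda^{\min}(\mu,\nu)}S_\xi S_\eta^*$. This is precisely the computation the authors report attempting and abandoning (see the Remark following Proposition~\ref{approximation-shift}). The obstruction is concrete: with that common partition the double sum runs over pairs $(\alpha_i,\alpha_j)\in\Lambda^{m+n}\times\Lambda^{m+n}$, and each summand has the form $\psi_m(f_{ij})\psi_n(g_{ij})^*$ where $f_{ij}$ is supported in a cylinder $Z(\alpha_i)$ of a path of degree $m+n$; so no individual summand is of the form $S_\xi S_\eta^*$ with $d(\xi)=m$ and $d(\eta)=n$, and one would have to regroup the sum over all $\alpha_i$ extending a given $\xi$ and all $\alpha_j$ extending a given $\eta$ --- a step nobody has shown how to carry out. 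The paper's way around this is Proposition~\ref{approximation-shift}(c): a variant of \eqref{FORMULA} proved with \emph{two different} partitions of unity, $\{\chi_{Z(\xi)}:\xi\in\Lambda^m\}$ and $\{\chi_{Z(\eta)}:\eta\in\Lambda^n\}$, whose associated frames $\{\chi_{Z(\xi)}\circ\sigma^n\}$ and $\{\chi_{Z(\eta)}\circ\sigma^m\}$ make the sum run over $\Lambda^m\times\Lambda^n$, the correct index set. Each coefficient then evaluates to $\chi_{Z(\xi)}$ or $\chi_{Z(\eta)}$ (or zero) subject to the factorisation conditions $\mu\xi=\alpha\eta$ and $\nu\eta=\beta\xi$, and $1$-coalignedness --- used directly through the uniqueness of the completing pair, not merely through the $*$-commuting property behind Lemma~\ref{pframe} --- forces $\alpha=\nu$ and $\beta=\mu$, collapsing the sum exactly to $\Lambda^{\min}(\mu,\nu)$. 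Without this (or an equivalent device), your verification of the key relation (TCK5), and hence of the whole proposition, is incomplete.
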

To prove this, we need the following  results.

\begin{prop}\label{approximation-shift}
Let $\Lambda$ be a finite $1$-coaligned $k$-graph with no sources. Suppose  that $m\wedge n=0$. Then 
	
\begin{itemize}
		\item[(a)]   $\{\chi_{Z(\xi)}:\xi\in \Lambda^{m}\}$ and $\{\chi_{Z(\xi)}\circ \sigma^n:\xi\in \Lambda^{m}\}$ are  Parseval frames for $X_m(\Lambda^\infty)$;
		\item[(b)]   $\{\chi_{Z(\eta)}:\eta\in \Lambda^{n}\}$ and $\{\chi_{Z(\eta)}\circ \sigma^m:\eta\in \Lambda^{n}\}$ are  Parseval frames for $X_n(\Lambda^\infty)$; and
		\item[(c)] for $\mu\in \Lambda^n$ and $\nu\in \Lambda^m$, we have	
		\begin{align}\label{swap-shift}
\psi_n(\notag&\chi_{Z(\mu)})^*\psi_m(\chi_{Z(\nu)})\\
&=\sum_{\xi\in \Lambda^m,\,\eta\in \Lambda^n}\psi_m\big(\big\langle \chi_{Z(\mu)},\chi_{Z(\eta)}\circ\sigma^m\big\rangle \cdot \chi_{Z(\xi)}\big)\psi_n\big(\big\langle \chi_{Z(\nu)},\chi_{Z(\xi)}\circ\sigma^n\big\rangle\cdot\chi_{Z(\eta)}\big)^*.
	\end{align}
	\end{itemize}	
\end{prop}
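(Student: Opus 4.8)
The plan is to handle parts (a) and (b) together (they are symmetric under exchanging the roles of $m$ and $n$), and then to deduce (c) from the frame statements by imitating the proof of Proposition~\ref{solelprop}. Throughout I use that $X_m(\Lambda^\infty)$ is the graph correspondence of $(\Lambda^\infty,\Lambda^\infty,\id,\sigma^m)$, that each $Z(\lambda)$ is clopen, and that every $z\in\Lambda^\infty$ factors as $z=z(0,m)\sigma^m(z)$ with $z(0,m)\in\Lambda^m$.

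First I would dispose of the first frame in each of (a), (b). Since every $z$ lies in exactly one $Z(\xi)$ with $\xi=z(0,m)\in\Lambda^m$, the family $\{Z(\xi):\xi\in\Lambda^m\}$ is a finite clopen partition of $\Lambda^\infty$, and $\sigma^m$ is injective on each $Z(\xi)$ (if $z,z'\in Z(\xi)$ and $\sigma^m(z)=\sigma^m(z')$ then $z=\xi\sigma^m(z)=\xi\sigma^m(z')=z'$). Thus $\{\rho_\xi:=\chi_{Z(\xi)}\}$ is a partition of unity on which $\sigma^m$ is one-to-one, and because $\sqrt{\rho_\xi}=\chi_{Z(\xi)}$, the argument of \cite[Proposition~8.2]{EV} used in Lemma~\ref{pframe} shows $\{\chi_{Z(\xi)}:\xi\in\Lambda^m\}$ is a Parseval frame for $X_m(\Lambda^\infty)$. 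Exchanging $m$ and $n$ gives the first frame in (b).

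For the second frame in (a) I would check the reconstruction formula for $\{\chi_{Z(\xi)}\circ\sigma^n:\xi\in\Lambda^m\}$ directly. For $x\in X_m$ and $z\in\Lambda^\infty$, unwinding the right action and the inner product and then summing over $\xi\in\Lambda^m$ collapses $\sum_{\xi}\chi_{Z(\xi)}(\sigma^n(z))\chi_{Z(\xi)}(\sigma^n(u))$ to the indicator of $z(n,n+m)=u(n,n+m)$, leaving
\[
\sum_{\sigma^m(u)=\sigma^m(z),\ u(n,n+m)=z(n,n+m)} x(u).
\]
So the second frame is equivalent to the assertion that the only such $u$ is $z$, and this is exactly where the $*$-commutativity of $\sigma^m$ and $\sigma^n$ (valid since $m\wedge n=0$, by Lemma~\ref{shiftmaps} together with the discussion of $*$-commuting families in Section~2) is used. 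The condition $u(n,n+m)=z(n,n+m)$ says $\sigma^n(u)$ and $\sigma^n(z)$ agree on $[0,m]$, while $\sigma^m(u)=\sigma^m(z)$ gives $\sigma^{m+n}(u)=\sigma^{m+n}(z)$, so $\sigma^n(u)$ and $\sigma^n(z)$ also agree on $[m,\infty)$; since $p=p(0,m)\sigma^m(p)$ for every path $p$, these force $\sigma^n(u)=\sigma^n(z)$. Now $\sigma^n(u)=\sigma^n(z)$ and $\sigma^m(u)=\sigma^m(z)$, and as $\sigma^m(\sigma^n(u))=\sigma^n(\sigma^m(u))$ the uniqueness clause in the definition of $*$-commuting (with $f=\sigma^m$, $g=\sigma^n$) yields $u=z$, as required. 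The second frame in (b) follows by symmetry.

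With the four Parseval frames in hand, part (c) is precisely the statement of Proposition~\ref{solelprop}(b) with the single partition-of-unity frame $\{\tau_i\}$ replaced by the cylinder frames $\{\chi_{Z(\xi)}\circ\sigma^n\}$, $\{\chi_{Z(\xi)}\}$ (for $X_m$, from (a)) and $\{\chi_{Z(\eta)}\circ\sigma^m\}$, $\{\chi_{Z(\eta)}\}$ (for $X_n$, from (b)). I would therefore re-run the proof of Proposition~\ref{solelprop} verbatim with these substitutions: the identity $\sigma_{m,n}(\chi_{Z(\xi)}\circ\sigma^n\otimes\chi_{Z(\eta)})=\sigma_{n,m}(\chi_{Z(\eta)}\circ\sigma^m\otimes\chi_{Z(\xi)})$ is immediate from \eqref{multiplicationformula}; the pointwise identity corresponding to \eqref{quick} uses only that $\sigma^n$ is injective on $\supp\chi_{Z(\eta)}=Z(\eta)$ for $\eta\in\Lambda^n$; and the reconstruction steps use the frames just listed. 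Passing from the resulting balanced-tensor identity (the analogue of \eqref{HELPFORMULA}) to the operator identity \eqref{swap-shift} goes through the Fock representation and the injectivity of $T_*$ exactly as in Proposition~\ref{solelprop}(b). The only genuinely new input is the uniqueness step in the previous paragraph; everything else is either a partition-of-unity computation or a transcription of existing proofs, so I expect the main obstacle to be organising that uniqueness cleanly and verifying the collapse of the reconstruction sum, since it is there that the $1$-coaligned hypothesis enters (via $*$-commutativity of the shifts) and where the restriction $m\wedge n=0$ is essential.
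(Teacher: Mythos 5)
Your proposal is correct and follows essentially the same route as the paper: parts (a) and (b) come from the partition-of-unity/Parseval-frame mechanism of Lemma~\ref{pframe}(a) (with the $*$-commutation of $\sigma^m$ and $\sigma^n$ for $m\wedge n=0$ supplying the uniqueness step), and part (c) is obtained by transcribing the proof of Proposition~\ref{solelprop}(b) with the cylinder-set frames and passing through the Fock representation. The only cosmetic difference is that you verify the reconstruction formula for $\{\chi_{Z(\xi)}\circ\sigma^n\}$ directly, whereas the paper simply invokes Lemma~\ref{pframe}(a) after observing that its proof needs injectivity of $\sigma^m$ (not $\sigma^n$) on the supports; your inlined computation is exactly the specialisation of that lemma's argument.
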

\begin{proof}
For (a), recall that the fibre $X_m(\Lambda^\infty)$ is the topological graph $(\Lambda^\infty,\Lambda^\infty,\id,\sigma^m)$. By  \cite [Remark~2.5]{KP},
    $\{Z(\xi): d(\xi)=m\}$ forms a partition of $\Lambda^\infty$ and therefore the set $\{\chi_{Z(\xi)}:\xi\in \Lambda^{m}\}$ is a partition of unity. Furthermore,   $\sigma^m$ is injective on each $\supp \chi_{Z(\xi)}$. Since the proof of Lemma~\ref{pframe}(a) only requires $\sigma^m$  to be injective on $\supp \chi_{Z(\xi)}$,  we can apply 
Lemma~\ref{pframe}(a) with $X(E_1):=X_m(\Lambda^\infty)$ and $g:=\sigma^n$ to get (a).

Part (b) follows from part (a).

Part (c) is very similar to  Proposition~\ref{HELPFORMULA}(b). The only difference is that  Proposition~\ref{HELPFORMULA}(b) considers one common  partition of unity  
$\{\tau_i\}$ and  corresponding  Parseval frames $\{\tau_i\circ h^n\}$ and  $\{\tau_j\circ h^m\}$ for the fibres $X_m$ and $X_n$. Here we use different partitions of unity $\{\chi_{Z(\xi)}:\xi\in \Lambda^{m}\}$ and $\{\chi_{Z(\eta)}:\eta\in \Lambda^{n}\}$ to get the Parseval frames 
$\{\chi_{Z(\xi)}\circ \sigma^n\}$ and $\{\chi_{Z(\eta)}\circ \sigma^m\}$ for the fibres $X_m(\Lambda^\infty)$ and $X_n(\Lambda^\infty)$. The proof of Proposition~\ref{HELPFORMULA}(b)  uses the reconstruction formulas of Parseval frames and  that $\sigma^m$ and $\sigma^n$  are  injective on  $\supp \chi_{Z(\xi)}$ and $\supp \chi_{Z(\eta)}$. So the proof of \eqref{swap-shift} caries over  from the proof of the Proposition~\ref{HELPFORMULA}(b).
\end{proof}

\begin{remark}
We will need \eqref{swap-shift} to prove that $\{S_\lambda:\lambda\in \Lambda\}$ in  Proposition~\ref{example}(a) satisfies (TCK5). To see (TCK5), we try to rewrite $S_\mu^*S_\nu$ as  $\sum_{\alpha\in \Lambda^p,\beta\in \Lambda^q} S_\alpha S_\beta^*$ for suitable $p,q$, and then reduce this to 
\[S_\mu^*S_\nu=\sum_{(\xi,\eta)\in\Lambda^{\min}(\mu,\nu)}S_\xi S_\eta^*.\] 
We first thought that we would do this using Proposition~\ref{HELPFORMULA}(b): Check that the set $\{\chi_{Z(\mu)}:\mu\in \Lambda^{m+n}\}$ is a partition of unity  such that  $\sigma^m|_{\supp \chi_{Z(\mu)}}$ and $\sigma^n|_{\supp \chi_{Z(\mu)}}$ are injective for all $\mu\in \Lambda^{m+n}$, and then  apply Proposition~\ref{HELPFORMULA}(b) to get
\[S_\mu^*S_\nu\notag=\sum_{\xi,\eta\in \Lambda^{m+n}}\psi_m\big(\big\langle \chi_{Z(\mu)},\chi_{Z(\eta)}\circ\sigma^m\big\rangle \cdot \chi_{Z(\xi)}\big)\psi_n\big(\big\langle \chi_{Z(\nu)},\chi_{Z(\xi)}\circ\sigma^n\big\rangle\cdot\chi_{Z(\eta)}\big)^*.\]
But we could not  reduce this sum  to $\sum_{(\xi,\eta)\in\Lambda^{\min}(\mu,\nu)}S_\xi S_\eta^*$. The new formula \eqref{swap-shift} is easier to simplify. 
\end{remark}

\begin{proof}[Proof of Proposition~\ref{example}(a)]
For (TCK1),   we note that $\{\chi_{Z(v)}:v\in \Lambda^0\}$ are mutually orthogonal projections in $C(\Lambda ^\infty)$. Since $\psi_0$ is a homomorphism and $S_v=\psi_0(\chi_{Z(v)})$, it follows that    $\{S_v: v\in \Lambda^0\}$ are mutually orthogonal projections in $\NT(X(\Lambda^\infty))$. 
	  
Next we show that for each $\lambda\in \Lambda$,  $S_\lambda$ is a partial isometry.
Since $\chi_{Z(\lambda)}\in X_{d(\lambda)}(\Lambda^\infty)$, a calculation in the fibre $X_{d(\lambda)}(\Lambda^\infty)$ shows that
	\begin{align*} 
	\langle\chi_{Z(\lambda)},\chi_{Z(\lambda)}\rangle(z)
	&=
	\sum_{\sigma^{d(\lambda)}(w)=z}
	\overline{\chi_{Z(\lambda)}(w)}\chi_{Z(\lambda)}(w)\\
	&=
	\big|\big\{w: \sigma^{d(\lambda)}(w)=z \text{ and } w\in Z(\lambda)\big\}\big|\\
	&=\begin{cases}
	0  &\text{if $z\notin Z\big(s(\lambda)\big)$}\\
	1  &\text{if $z\in Z\big(s(\lambda)\big)$}\\
	\end{cases}\\
	&=\chi_{Z(s(\lambda))}(z).
	\end{align*}
Using  (T2) we get
\[
S_{\lambda}^*S_\lambda=\psi_{d(\lambda)}(\chi_{Z(\lambda)})^*\psi_{d(\lambda)}(\chi_{Z(\lambda)})=\psi_0(\langle\chi_{Z(\lambda)},\chi_{Z(\lambda)}\rangle)=\psi_0 (\chi_{Z(s(\lambda))})=S_{s(\lambda)}.
\]
Since $S_{s(\lambda)}$ is a projection,  $S_{\lambda}$ is a partial isometry.  Notice that this also establishes (TCK3).

For  (TCK2), let $\lambda, \mu\in \Lambda$ such that $s(\lambda)=r(\mu)$. The multiplication formula \eqref{multiplicationformula}  for $\chi_{Z(\lambda)}\in X_{d(\lambda)}(\Lambda^\infty)$ and  $\chi_{Z(\mu)}\in X_{d(\mu)}(\Lambda^\infty)$ implies that
\[
	\big(\chi_{Z(\lambda)}\chi_{Z(\mu)}\big)(z)=\chi_{Z(\lambda)}(z)
	\chi_{Z(\mu)}\big(\sigma^{d(\lambda)}(z)\big)
	=\chi_{Z(\lambda\mu)}(z).
\]
Now, using (T2), 
\[
S_\lambda S_\mu =\psi_{d(\lambda)}(\chi_{Z(\lambda)})\psi_{d(\mu)}(\chi_{Z(\mu)})=\psi_{d(\lambda)+d(\mu)}(\chi_{Z(\lambda)}\chi_{Z(\mu)})=\psi_{d(\lambda\mu)}(\chi_{Z(\lambda\mu)})=S_{\lambda\mu}
\]
which is (TCK2).
		
We will need (TCK5) for the proof of  (TCK4). So we first check (TCK5).   Let $\mu, \nu\in \Lambda$. By Lemma~\ref{remark62} we may assume that $d(\mu)\wedge d(\nu)=0$. For  convenience,  let $m:=d(\nu)$ and $n:=d(\mu)$.  Applying Proposition~\ref{approximation-shift}(c) to $\mu,\nu$ gives
	\begin{equation}\label{T5}
	S_\mu^*S_\nu
	=\sum_{\xi\in \Lambda^m,\,\eta\in \Lambda^n}\psi_m\big(\big\langle \chi_{Z(\mu)},\chi_{Z(\eta)}\circ\sigma^m\big\rangle \cdot \chi_{Z(\xi)}\big)\psi_n\big(\big\langle \chi_{Z(\nu)},\chi_{Z(\xi)}\circ\sigma^n\big\rangle\cdot\chi_{Z(\eta)}\big)^*.
	\end{equation}
	We now consider a summand for fixed $\xi$ and $\eta$. We have
	\begin{align*}
	\big(\big\langle \chi_{Z(\mu)},\chi_{Z(\eta)}\circ\sigma^m\big\rangle \cdot \chi_{Z(\xi)}\big)(z)
	&=
	\big\langle \chi_{Z(\mu)},\chi_{Z(\eta)}\circ\sigma^m\big\rangle(z ) \chi_{Z(\xi)}(z)\\
	&=
	 \chi_{Z(\xi)}(z)\sum_{\sigma^n(w)=z }\chi_{Z(\mu)}(w)\chi_{Z(\eta)}\big(\sigma^m(w)\big)\\
	&=
	\begin{cases}
	1  &\text{if $z\in Z(\xi)$,  and  $\exists\alpha\in \Lambda^m$ such that $\mu\xi=\alpha\eta$}\\
	0  &\text{otherwise}
	\end{cases}\\
	&=
	\begin{cases}
	\chi_{Z(\xi)}(z)&\text{if $\exists\alpha\in \Lambda^m$ such that $\mu\xi=\alpha\eta$} \\
	0  &\text{otherwise.}
	\end{cases}
	\end{align*}
	A similar calculation shows that
	\begin{align*}
	\big(\big\langle \chi_{Z(\nu)},\chi_{Z(\xi)}\circ\sigma^n\big\rangle\cdot\chi_{Z(\eta)}\big)(z)
	&=
	\begin{cases}
	\chi_{Z(\eta)}(z) &\text{if } \exists\beta\in \Lambda^n \text { such that } \nu\eta=\beta\xi\\
	0  &\text{otherwise}.
	\end{cases}
	\end{align*}
	Fix a nonzero $\xi$-$\eta$-summand. Then there exist $\alpha, \beta$ such that $\mu\xi=\alpha\eta \,\text{ and }\,\nu\eta=\beta\xi$. In particular 	 $s(\xi)=s(\eta)$ and $(\xi,\eta)\in\Lambda^{\min}(\mu,\nu)$. Then  $1$-coalignedness gives   $\alpha=\nu$ and $\beta=\mu$. Now the 
 sum in \eqref{T5} collapses to
	\begin{align*}
	S_\mu^*S_\nu
	&=\sum_{(\xi,\eta)\in\Lambda^{\min}(\mu,\nu)}\psi_m\big(\chi_{Z(\xi)}\big)\psi_n\big(\chi_{Z(\eta)}\big)^*
	=\sum_{(\xi,\eta)\in\Lambda^{\min}(\mu,\nu)}S_\xi S_\eta^*,
	\end{align*}
	which completes our proof of (TCK5).
	
	To see (TCK4),
	let $v\in \Lambda^0$ and $n\in \N^k$. Suppose that $\lambda, \mu\in v\Lambda^n$ and $\lambda\neq \mu$.  Since  $\lambda$ and $\mu$ are two different paths with the same degree and the same range, $\Lambda^{\min}(\lambda,\mu)=\emptyset$. Applying (TCK5) shows that $S_\lambda^*S_\mu=0$, and hence 
	$S_\lambda(S_\lambda^*S_\mu)S_\mu^*=0$.
	By (TCK2) we have $S_vS_\lambda S_{\lambda}^*=S_\lambda S_{\lambda}^*$ and hence $S_v\geq S_\lambda S_{\lambda}^*$. Thus 
	$S_{v}\geq \sum_{\lambda\in v\Lambda^n}S_\lambda S_\lambda^*$.	
	We  have now proved (TCK4) and therefore   $\{S_\lambda:\lambda\in \Lambda\}$ is a
	Toeplitz-Cuntz-Krieger $\Lambda$-family in $\NT(X(\Lambda^\infty))$.
	
	To see that the induced  homomorphism $\pi_S$ is injective, by \cite[Theorem~8.1]{RS}, it suffices to fix  $v\in \Lambda^0$ and  $n\in \N^k_+$, and  check
	that $S_{v}\gneq\sum_{\lambda\in v\Lambda^n}S_\lambda S_\lambda^*$.  Let $T$ be the Fock representation  of $X(\Lambda^\infty)$.  Then  $T_*:\NT(X(\Lambda^\infty))\rightarrow \LL(F(X(\Lambda^\infty)))$ satisfies
	\begin{align*}
	T_*\Big( S_{v}- \sum_{\lambda\in v\Lambda^n}S_\lambda S_\lambda^*\Big)
	&=T_0(\chi_{Z(v)})- \sum_{\lambda\in v\Lambda^n}T_{n}(\chi_{Z(\lambda)}) T_{n}(\chi_{Z(\lambda)})^*.
	\end{align*}
	The adjoint formula \eqref{fockadjoint} for the Fock representation  says that  $T_{n}(\chi_{Z(\lambda)})^*$ vanishes  on the $0$-summand of the Fock module $F(X(\Lambda^\infty))$.   The left action of $C(Z)$ on each fibre in injective, and it follows that  $T_0$ is injective.  Since $\Lambda$ has no  sources,    $T_0(\chi_{Z(v)})\neq 0$.
	Thus
	$T_0(\chi_{Z(v)})\neq \sum_{\lambda\in v\Lambda^n}T_{n}(\chi_{Z(\lambda)}) T_{n}(\chi_{Z(\lambda)})^*$.
	An application of the injectivity of  $T_*$ gives $S_{v}\gneq \sum_{\lambda\in v\Lambda^n}S_\lambda S_\lambda^*$, as  required.
		
	Finally, since the gauge actions on $\TCL$ and $\NT(X(\Lambda^\infty))$ satisfy  ${\tilde{\gamma}}_z(s_\lambda)=z^{d(\lambda)}s_\lambda$  and $\gamma_z(\psi_m(x))=z^m\psi_m(x)$, respectively, we have $\pi_{S}\circ \tilde{\gamma}=\gamma\circ \pi_{S}$.
\end{proof}

\begin{proof}[Proof of Proposition~\ref{example}(b)]
	It suffices to check (TCK1)$-$(TCK3) and (CK).
	Since the quotient map $q$  is a $*$-homomorphism, and   $\{ S_\lambda:\lambda\in \Lambda\}$ satisfies (TCK1)$-$(TCK3), so does $\{q\circ S_\lambda:\lambda\in \Lambda\}$. 
	
	Towards (CK), note that  $q\circ \psi$ is a universal Cuntz--Pimsner covariant 
	representation of $X(\Lambda^\infty)$ (see \cite[Proposition~2.9]{Fo}).   For convenience let $\rho:=q\circ \psi$. Then the restriction $\rho$ on each fibre $X_n$ is $\rho_n=q\circ \psi_n$. 
	Let $\mu\in \Lambda^n, n\in \N^k$. Next we show that the
	left action of $\chi_{Z(\mu)}$ on the fibre $X_n$ is by the finite rank operator $\Theta_{\chi_{Z(\mu)},\chi_{Z(\mu)}}$.
	To see this, take  $x\in X_n(\Lambda^\infty)$ and $z\in \Lambda^\infty$.  Then
	\begin{align*}
	(\Theta_{\chi_{Z(\mu)},\chi_{Z(\mu)}}(x))(z)\notag&=(\chi_{Z(\mu)}\cdot \langle\chi_{Z(\mu)}, x\rangle)(z)\\
	&=\chi_{Z(\mu)}(z)\langle\chi_{Z(\mu)}, x\rangle(\sigma^n(z))\\
	\notag&=\chi_{Z(\mu)}(z) \sum_{\sigma^n(w)=\sigma^n(z)}\overline{\chi_{Z(\mu)}(w)} x(w).
	\end{align*}
	This vanishes unless $z,w\in Z(\mu)$. Since $\mu\in \Lambda^n$ and  $w,z\in Z(\mu)$, the equation $\sigma^n(w)=\sigma^n(z)$ has a  unique  solution $z$ and therefore  the sum collapses to $\overline{\chi_{Z(\mu)}(z)} x(z)$. It follows that $\big(\Theta_{\chi_{Z(\mu)},\chi_{Z(\mu)}}(x)\big)(z)=\chi_{Z(\mu)}(z)x(z)$. Thus 
	\begin{align}\label{leftaction11}
	\Theta_{\chi_{Z(\mu)},\chi_{Z(\mu)}}&=\varphi_{d(\mu)}(\chi_{Z(\mu)})\text{ for } \mu\in \Lambda^n, n\in \N^k,
	\end{align}
	 and the action of $\chi_{Z(\mu)}$ on $X_n$ is by $\Theta_{\chi_{Z(\mu)},\chi_{Z(\mu)}}$.
		
	Now we can verify (CK).  Let $v\in \Lambda^0$ and $n\in \N^k$. Then a routine calculation shows that
	\begin{align*}
	\sum_{\lambda\in v\Lambda^n}(q\circ S_\lambda) (q\circ S_\lambda)^*&=\sum_{\lambda\in v\Lambda^n}\rho_{d(\lambda)}(\chi_{Z(\lambda)}) \rho_{d(\lambda)}(\chi_{Z(\lambda)})^*\\
	&=\sum_{\lambda\in v\Lambda^n} \rho^{(d(\lambda))}(\Theta_{\chi_{Z(\lambda)},\chi_{Z(\lambda)}})\\
	&= \sum_{\lambda\in v\Lambda^n} \rho^{(d(\lambda))}(\varphi_{d(\lambda)}(\chi_{Z(\lambda)}))\quad \text{by $\eqref{leftaction11}$.}
	\end{align*}
	Since $\rho$ is Cuntz--Pimsner-covariant,
	\begin{align*}
	\sum_{\lambda\in v\Lambda^n}(q\circ S_\lambda) (q\circ S_\lambda)^*
	&=\sum_{\lambda\in v\Lambda^n} \rho_0(\chi_{Z(\lambda)})
	=\rho_0 \Big(\sum_{\lambda\in v\Lambda^n} \chi_{Z(\lambda)}\Big)
	=q\circ \psi_0 \big( \chi_{Z(v)}\big)
	=q\circ (S_v).
	\end{align*}
	Thus (CK) holds and $\{q\circ S_\lambda:\lambda\in \Lambda\}$ forms a Cuntz-Krieger $\Lambda$-family in $\OO(X(\Lambda^\infty))$. 
	
	The universal property  gives a homomorphism $\pi_{q\circ S}:C^*(\Lambda)\rightarrow\OO(X(\Lambda^\infty))$, and  
$\pi_{q\circ S}$ intertwines the gauge actions. Since $\Lambda$ has no sources and  $\rho_0$ is injective (by \cite[Lemma~3.15]{SY}), it follows that $\rho_0 ( \chi_{Z(v)})\neq 0$ for all $v\in \Lambda^0$. Now the gauge-invariant uniqueness theorem (see \cite[Theorem~3.4]{KP}) implies that $\pi_{q\circ S}$ is injective.
	
	To show that  $\pi_{q\circ S}$ is surjective, note that $\OO(X(\Lambda^\infty))$ is generated by $\rho(X(\Lambda^\infty))$. Also, by the Stone-Weierstrass theorem,  $\{\chi_{Z(\lambda)}: \lambda\in \Lambda\}$ spans  a dense $*$-subalgebra of $C(\Lambda^\infty)$.
	Since the norm of   $X(\Lambda^\infty)$
	is equivalent to  $\|\cdot\|_\infty $, the elements $\{\chi_{Z(\lambda)}:\lambda\in \Lambda\}$ span  a dense subspace of  $X(\Lambda^\infty)$.
	Thus it is enough for us  to show that  $\rho_m(\chi_{Z(\mu)})$ lies in the range of $\pi_{q\circ S}$ for all $m,n\in \N^k$ and $\mu\in \Lambda^n$.
	
	We first check this for $m=0$ and  for all $\mu\in \Lambda^n$. Since $\rho$ is Cuntz--Pimsner covariant, a routine calculation using \eqref{leftaction11} shows that
	\begin{align}\label{casezero}
	\rho_0(\chi_{Z(\mu)})&=\rho^{(d(\mu))}\big(\varphi_{d(\mu)}(\chi_{Z(\mu)})\big)
	=\rho^{(d(\mu))}\big(\Theta_{\chi_{Z(\mu)},\chi_{Z(\mu)}}\big)
	=\rho_{d(\mu)}(\chi_{Z(\mu)}) \rho_{d(\mu)}(\chi_{Z(\mu)})^*.
		\end{align}
	This equals $(q\circ S_\mu )(q\circ S_\mu) ^*$ which belongs to the range of $\pi_{q\circ S}$.
	
	Now let $m\neq 0$ and take $\mu\in \Lambda^n$. Notice that $\chi_{Z(\mu)}= \sum_{\nu\in s(\mu)\Lambda^m}\chi_{Z(\mu\nu)}$. Each $\nu$-summand is the pointwise multiplication of  $\chi_{Z(\mu\nu(0,m)}$ and $\chi_{Z(\mu\nu(m,m+n))}\circ \sigma^m$. 
	This is exactly the  right action of $\chi_{Z(\mu\nu(m,m+n))}$ on
	 $\chi_{Z(\mu\nu(0,m))}\in X_m(\Lambda^\infty)$. It follows
	\begin{align*}
	&\rho_m(\chi_{Z(\mu)})
	=\rho_m\Big(\sum_{\nu\in s(\mu)\Lambda^m}\chi_{Z(\mu\nu(0,m))}\cdot\,  \chi_{Z(\mu\nu(m,m+n))}\Big)\\
	&=\sum_{\nu\in s(\mu)\Lambda^m}\rho_m(\chi_{Z(\mu\nu(0,m))})\rho_0(\chi_{Z(\mu\nu(m,m+n))})\\
	&=\sum_{\nu\in s(\mu)\Lambda^m} (q\circ S_{\mu\nu(0,m)})\rho_0(\chi_{Z(\mu\nu(m,m+n))}),
	\end{align*}
	which lies in the range of $\pi_{q\circ S}$ by \eqref{casezero}, as required.
\end{proof}

\subsection*{KMS states on the Toeplitz algebras}
Here we want  to see the relationship between  KMS states of the $C^*$-algebras $\TCL$ and $\NT(X(\Lambda^\infty))$. The KMS states of $\TCL$ are described thoroughly in \cite[Theorem~6.1]{aHLRS}. We now apply Theorem~\ref{theorem1} to characterise  KMS states of $\NT(X(\Lambda^\infty))$.
It follows from \cite[Proposition~7.3]{AaHR} that for the shift maps $\sigma^{e_i} (1\leq i\leq k)$ on $\Lambda^\infty$, each $\beta_{c_i}$ in Theorem~\ref{theorem1} is exactly  the critical inverse temperature $\ln \rho(A_i)$ used in \cite[Theorem~6.1]{aHLRS}.
Thus  the range of possible inverse temperatures studied in Theorem~\ref{theorem1} is the same as that of \cite[Theorem~6.1]{aHLRS}. Now when we use Proposition~\ref{example}(a) to view $\TCL$ as a $C^*$-subalgebra of $\NT(X(\Lambda^\infty))$,  restricting KMS states of $\NT(X(\Lambda^\infty))$ gives KMS states of $\TCL$ with the same inverse temperature.
We expect from our results in \cite[Corollary~7.6]{AaHR} to see that for the common inverse temperatures described in Theorem~\ref{theorem1} and  \cite[Theorem~6.1]{aHLRS}, all KMS states of $\TCL$ arise as restrictions of KMS states of $\NT(X(\Lambda^\infty))$. To see this, we need  generalisations of \cite[Proposition~7.4, Corollary~7.5]{AaHR}, which are stated in Proposition~\ref{restriction} and Corollary~\ref{restrictioncor} below. Their proofs  are very similar to those of \cite{AaHR} (using our operator $R$ from \eqref{defR}  in place of the $R$ from \cite{AaHR}), and so we omit their proofs and  refer the reader to \cite{AaHR}. 

We keep the notation Theorem~\ref{theorem1} to emphasis the parallels with \cite[Theorem~6.1]{aHLRS}. To avoid a clash, we write   $\delta$ for the measure $\varepsilon$ in Theorem~\ref{theorem1}, and choose $\varepsilon$ for the vectors in $[1,\infty)^{\Lambda^0}$ appearing in \cite[Theorem~6.1]{aHLRS}. Otherwise we keep the notation of  Theorem~\ref{theorem1}.

\begin{prop}\label{restriction}
	Suppose that  $\Lambda$ is a finite  $1$-coaligned $k$-graph with no sources and no sinks.  For $1 \leq i \leq k$, let $A_i\in M_{\Lambda^0}\big([0,\infty)\big)$ be the matrix with entries $A_i(v,w)=|v\Lambda^{e_i}w|$. Suppose that $r\in (0,\infty)^k$
	satisfies $\beta r_i> \ln \rho(A_i)$ for all $1 \leq i \leq k$. Let $\alpha : \R\rightarrow \Aut\NT(X(\Lambda^\infty))$ and $\tilde{\alpha}: \R\rightarrow \Aut\TT C^*(\Lambda) $ be given in terms of the  gauge actions by $\alpha_t=\gamma_{e^{ i t r}}$ and $\tilde{\alpha}_t={\tilde{\gamma}}_{e^{ i t r}}$.
	Let $\delta$ be a finite regular Borel measure on $\Lambda^\infty$ such that $\int f_\beta\,d\delta=1$. Define $\varepsilon=(\varepsilon_v)\in [0,\infty)^{\Lambda^0}$ by $\varepsilon_v=\delta(Z(v))$ and take  $y=(y_v)\in [0,\infty)^{\Lambda^0}$ as in \cite[Theorem~6.1]{aHLRS}. Then $y \cdot \varepsilon=1,$ and the restriction of the state $\phi_\delta$ of
	Theorem~\ref{theorem1} to $(\TCL,\tilde{\alpha})$ is the state $\phi_\varepsilon$ of \cite[Theorem~6.1]{aHLRS}.
\end{prop}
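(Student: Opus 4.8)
The plan is to follow the strategy of \cite[Proposition~7.4]{AaHR}. Using Proposition~\ref{example}(a), I regard $\TCL$ as a $C^*$-subalgebra of $\NT(X(\Lambda^\infty))$ via the injective homomorphism $\pi_S$ with $\pi_S(s_\lambda)=S_\lambda=\psi_{d(\lambda)}(\chi_{Z(\lambda)})$. Since $\pi_S$ intertwines the gauge actions, it intertwines $\tilde{\alpha}$ and $\alpha$, so $\phi_\delta\circ\pi_S$ is a state of $\TCL$ that is KMS$_\beta$ for $\tilde{\alpha}$. Because the elements $s_\mu s_\nu^*$ span a dense subspace of $\TCL$, it suffices to show that $\phi_\delta(S_\mu S_\nu^*)$ agrees with the value $\phi_\varepsilon(s_\mu s_\nu^*)$ prescribed in \cite[Theorem~6.1]{aHLRS}, and separately to verify the normalisation $y\cdot\varepsilon=1$.

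For the spanning elements, write $\mu_\delta:=\sum_{n\in\N^k}e^{-\beta r\cdot n}R^n\delta$ for the measure of Theorem~\ref{theorem1}(a). The formula \eqref{equkms} gives $\phi_\delta(S_\mu S_\nu^*)=0$ unless $d(\mu)=d(\nu)=:n$, in which case it equals $e^{-\beta r\cdot n}\int\langle\chi_{Z(\nu)},\chi_{Z(\mu)}\rangle\,d\mu_\delta$. As in the proof of Proposition~\ref{example}(a) one has $\langle\chi_{Z(\nu)},\chi_{Z(\mu)}\rangle=\delta_{\mu,\nu}\chi_{Z(s(\mu))}$ (the cross terms vanish since distinct cylinders of the same degree are disjoint), so
\begin{equation*}
\phi_\delta(S_\mu S_\nu^*)=\delta_{\mu,\nu}\,e^{-\beta r\cdot d(\mu)}\,\mu_\delta\big(Z(s(\mu))\big).
\end{equation*}
The heart of the matter is then the identity
\begin{equation*}
(R^n\delta)\big(Z(v)\big)=\sum_{w\in\Lambda^0}|v\Lambda^n w|\,\varepsilon_w=(A^n\varepsilon)_v,\qquad A^n:=A_1^{n_1}\cdots A_k^{n_k},
\end{equation*}
which follows from the definition \eqref{defR} of $R^n$ once one observes that, for $z\in Z(w)$, the elements $u$ with $\sigma^n(u)=z$ and $r(u)=v$ are in bijection with $v\Lambda^n w$. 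Summing over $n$, with absolute convergence guaranteed by $\beta r_i>\ln\rho(A_i)$, yields $\mu_\delta(Z(v))=\big(\sum_{n\in\N^k}e^{-\beta r\cdot n}A^n\varepsilon\big)_v$, which is precisely the vertex measure attached to $\varepsilon$ in \cite[Theorem~6.1]{aHLRS}. Hence $\phi_\delta\circ\pi_S$ and $\phi_\varepsilon$ agree on every $s_\mu s_\nu^*$, and therefore coincide.

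For the normalisation I would evaluate $f_\beta$ on the cylinder sets. For $z\in Z(v)$ one has $|\sigma^{-n}(z)|=|\Lambda^n v|=\sum_{w}|w\Lambda^n v|$, so $f_\beta$ is constant on $Z(v)$ with value $\sum_{n\in\N^k}e^{-\beta r\cdot n}|\Lambda^n v|=y_v$, the aHLRS normalisation quantity. Integrating the (locally constant, hence continuous) function $f_\beta$ against $\delta$ and using $\varepsilon_v=\delta(Z(v))$ then gives
\begin{equation*}
1=\int f_\beta\,d\delta=\sum_{v\in\Lambda^0}y_v\,\delta\big(Z(v)\big)=\sum_{v\in\Lambda^0}y_v\varepsilon_v=y\cdot\varepsilon,
\end{equation*}
which is the first assertion.

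The main obstacle is bookkeeping rather than anything conceptual: one must translate carefully between the operator $R^n$ on measures over $\Lambda^\infty$ and the vertex matrices $A_i$ of \cite{aHLRS}, keeping track of ranges versus sources so that the orientation of $A^n$ matches the convention $A_i(v,w)=|v\Lambda^{e_i}w|$, and so that both the vertex measure $\sum_n e^{-\beta r\cdot n}A^n\varepsilon$ and the normalisation vector $y$ are identified with exactly the objects appearing in \cite[Theorem~6.1]{aHLRS}. Once these identifications are pinned down, the agreement of $\phi_\delta\circ\pi_S$ and $\phi_\varepsilon$ on the spanning family, together with $y\cdot\varepsilon=1$, completes the proof; since this is entirely parallel to \cite[Proposition~7.4]{AaHR} (with $R$ from \eqref{defR} replacing the one-variable $R$ there), the detailed calculations may be omitted.
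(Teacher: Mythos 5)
Your proposal is correct and takes essentially the same route the paper intends: the paper omits this proof, deferring to \cite[Proposition~7.4]{AaHR} ``using our operator $R$ from \eqref{defR}'', and your argument is exactly that proof transported to the $k$-variable setting --- restrict $\phi_\delta$ through $\pi_S$, compute on the spanning elements $S_\mu S_\nu^*$ via \eqref{equkms}, identify $(R^n\delta)(Z(v))=(A^n\varepsilon)_v$ so that $\mu_\delta(Z(v))$ becomes the vertex measure of \cite[Theorem~6.1]{aHLRS}, and observe $f_\beta|_{Z(v)}\equiv y_v$ to get $1=\int f_\beta\,d\delta=y\cdot\varepsilon$. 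All the individual computations (vanishing of cross terms, $\langle\chi_{Z(\mu)},\chi_{Z(\mu)}\rangle=\chi_{Z(s(\mu))}$, the bijection between $\sigma^n$-preimages of $z\in Z(w)$ and $v\Lambda^n w$) check out, so nothing is missing.
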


\begin{cor}\label{restrictioncor}
	Let  $\Lambda, \beta, \alpha$ and $\tilde{\alpha}$ be as in Proposition~\ref{restriction}.  Suppose that $\delta_1, \delta_2$ are regular Borel measures on $\Lambda^\infty$
	satisfying $\int f_\beta\, d\delta_i=1$. Then $\phi_{\delta_1}|_{\TCL}=\phi_{\delta_2}|_{\TCL}$ if and only if $\delta_1(Z(v))=\delta_2(Z(v))$ for all $v\in \Lambda^0.$
\end{cor}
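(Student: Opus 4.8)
The plan is to show that the restriction $\phi_\delta|_{\TCL}$ is completely encoded by the finite vector $\big(\delta(Z(v))\big)_{v\in\Lambda^0}$, so that both implications reduce to a statement about these finitely many numbers. Throughout write $\mu=\sum_{n\in\N^k}e^{-\beta r\cdot n}R^n\delta$ for the measure attached to $\delta$ in Theorem~\ref{theorem1}(a), and recall from Proposition~\ref{example}(a) that $\TCL$ sits inside $\NT(X(\Lambda^\infty))$ via $s_\lambda\mapsto S_\lambda=\psi_{d(\lambda)}(\chi_{Z(\lambda)})$.

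First I would compute the restriction on the spanning elements $S_\mu S_\nu^*$. Using the formula \eqref{equkms} for $\phi_\delta$, together with the fact that two cylinders $Z(\mu),Z(\nu)$ of the same degree are disjoint unless $\mu=\nu$, one finds
\[
\phi_\delta\big(S_\mu S_\nu^*\big)=\delta_{\mu,\nu}\,e^{-\beta r\cdot d(\mu)}\,\mu\big(Z(s(\mu))\big),
\]
since $\langle\chi_{Z(\nu)},\chi_{Z(\mu)}\rangle=\delta_{\mu,\nu}\chi_{Z(s(\mu))}$ in the fibre $X_{d(\mu)}(\Lambda^\infty)$. As the elements $S_\mu S_\nu^*$ span a dense subalgebra of $\TCL$, this shows that $\phi_\delta|_{\TCL}$ is determined by, and determines, the numbers $\big(\mu(Z(v))\big)_{v\in\Lambda^0}$.

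It then remains to see that the vertex data of $\mu$ and of $\delta$ determine one another. A short computation from the definition \eqref{defR} of $R^{e_i}$ gives $\big(R^{e_i}\delta\big)\big(Z(v)\big)=\sum_{w\in\Lambda^0}A_i(v,w)\,\delta\big(Z(w)\big)$, because the fibre over $z$ counts the edges in $v\Lambda^{e_i}r(z)$. Hence, writing $A^n=\prod_{i=1}^kA_i^{n_i}$ (these matrices commute, as the $R^{e_i}$ do), the vector $\big(\mu(Z(v))\big)_v$ equals $\big(\sum_{n\in\N^k}e^{-\beta r\cdot n}A^n\big)\big(\delta(Z(v))\big)_v$. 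The hypothesis $\beta r_i>\ln\rho(A_i)$ makes this finite matrix invertible with inverse $\prod_{i=1}^k\big(1-e^{-\beta r_i}A_i\big)$, so the two finite vectors $\big(\mu(Z(v))\big)_v$ and $\big(\delta(Z(v))\big)_v$ determine each other. Combining this with the previous paragraph gives both directions at once: $\phi_{\delta_1}|_{\TCL}=\phi_{\delta_2}|_{\TCL}$ iff $\mu_1(Z(v))=\mu_2(Z(v))$ for all $v$ iff $\delta_1(Z(v))=\delta_2(Z(v))$ for all $v$.

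Alternatively, one may argue purely formally: Proposition~\ref{restriction} identifies $\phi_{\delta_i}|_{\TCL}$ with the state $\phi_{\varepsilon^{(i)}}$ of \cite[Theorem~6.1]{aHLRS} for $\varepsilon^{(i)}_v=\delta_i(Z(v))$, and the injectivity of the parametrisation $\varepsilon\mapsto\phi_\varepsilon$ from that theorem then yields the claim immediately. I would present the direct computation above as the main argument, since it is self-contained and does not require any rational-independence hypothesis on $r$. The only delicate point is the matrix identity $\big(R^{e_i}\delta\big)(Z(v))=\sum_w A_i(v,w)\delta(Z(w))$; everything else is routine bookkeeping with cylinder sets.
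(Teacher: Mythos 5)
Your proof is correct, but your main argument is not the route the paper takes: the paper omits the proof entirely and refers to \cite{AaHR}, where the corollary is deduced formally from Proposition~\ref{restriction} together with the injectivity of the parametrisation $\varepsilon\mapsto\phi_\varepsilon$ of \cite[Theorem~6.1]{aHLRS} --- i.e.\ exactly the ``alternative'' argument you sketch in your last paragraph. Your primary argument instead unfolds everything into a self-contained computation inside $\NT(X(\Lambda^\infty))$, and each step checks out: the evaluation $\phi_\delta(S_\lambda S_\tau^*)=\delta_{\lambda,\tau}\,e^{-\beta r\cdot d(\lambda)}\,\mu\big(Z(s(\lambda))\big)$ follows from \eqref{equkms} and $\langle\chi_{Z(\tau)},\chi_{Z(\lambda)}\rangle=\delta_{\lambda,\tau}\chi_{Z(s(\lambda))}$ (essentially the inner-product computation in the proof of Proposition~\ref{example}(a), plus disjointness of distinct cylinders of equal degree); your identity $(R^{e_i}\delta)(Z(v))=\sum_w A_i(v,w)\,\delta(Z(w))$ is right, since the preimages of $z$ under $\sigma^{e_i}$ lying in $Z(v)$ are exactly the paths $\lambda z$ with $\lambda\in v\Lambda^{e_i}r(z)$; and $\sum_{n\in\N^k}e^{-\beta r\cdot n}A^n=\prod_{i=1}^k(1-e^{-\beta r_i}A_i)^{-1}$ is invertible because $\beta r_i>\ln\rho(A_i)$ forces $e^{-\beta r_i}\rho(A_i)<1$. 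What your route buys: it bypasses Proposition~\ref{restriction} (whose proof the paper also omits) and the classification machinery of \cite{aHLRS}, and it makes transparent that no rational-independence hypothesis on $r$ is needed, because the required injectivity comes from a finite matrix inversion rather than from the full parametrisation of KMS states --- this mirrors, at the level of vertex vectors, the measure-level inversion in Proposition~\ref{series}. What it costs: you silently re-prove the computation underlying Proposition~\ref{restriction}, and you lean on the standard spanning fact $\TCL=\clsp\{s_\lambda s_\tau^*\}$, which is where (TCK5) enters. One cosmetic caveat: you use $\mu$ simultaneously for paths and for the measure $\sum_n e^{-\beta r\cdot n}R^n\delta$; choose distinct symbols to avoid expressions like $\mu(Z(s(\mu)))$.
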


\begin{prop}
		Let  $\Lambda, \beta, \alpha$ and $\tilde{\alpha}$ be as in Proposition~\ref{restriction}. Then every KMS$_\beta$ state of $(\TCL,\tilde{\alpha})$ is the restriction of a KMS$_\beta$ state  of $\NT(X(\Lambda^\infty),\alpha)$.
\end{prop}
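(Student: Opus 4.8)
The plan is to combine the parametrisation of KMS states of $\TCL$ in \cite[Theorem~6.1]{aHLRS} with part~(a) of Theorem~\ref{theorem1} and the compatibility result Proposition~\ref{restriction}. Start with a KMS$_\beta$ state $\phi$ of $(\TCL,\tilde\alpha)$. By \cite[Theorem~6.1]{aHLRS} there is a vector $\varepsilon=(\varepsilon_v)\in[0,\infty)^{\Lambda^0}$ with $y\cdot\varepsilon=1$ such that $\phi=\phi_\varepsilon$, where $y=(y_v)$ is the weight vector of \cite[Theorem~6.1]{aHLRS}. The goal is to manufacture a finite regular Borel measure $\delta$ on $\Lambda^\infty$ with $\int f_\beta\,d\delta=1$ and $\delta(Z(v))=\varepsilon_v$ for all $v$: then Theorem~\ref{theorem1}(a) produces a KMS$_\beta$ state $\phi_\delta$ of $(\NT(X(\Lambda^\infty)),\alpha)$, and Proposition~\ref{restriction} identifies $\phi_\delta|_{\TCL}$ with the state of \cite[Theorem~6.1]{aHLRS} determined by the vertex masses $\delta(Z(v))=\varepsilon_v$, namely $\phi_\varepsilon=\phi$.

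First I would construct $\delta$. Since $\Lambda$ has no sources, each cylinder $Z(v)$ is nonempty, so I may choose an infinite path $x_v\in Z(v)$ for every $v\in\Lambda^0$ and set
\[
\delta:=\sum_{v\in\Lambda^0}\varepsilon_v\,\delta_{x_v},
\]
where $\delta_{x_v}$ is the unit point mass at $x_v$. This is a finite positive Borel measure, and it is regular as a finite combination of point masses. As the cylinders $\{Z(v):v\in\Lambda^0\}$ partition $\Lambda^\infty$ and $x_w\in Z(w)$, we get $\delta(Z(v))=\sum_{w}\varepsilon_w\delta_{x_w}(Z(v))=\varepsilon_v$, as wanted.

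The one point needing care is the normalisation $\int f_\beta\,d\delta=1$. Here I would use that $f_\beta$ is constant on each cylinder. Indeed, for $z\in Z(v)$ the preimages of $z$ under $\sigma^n=h^n$ are exactly the paths $\lambda z$ with $\lambda\in\Lambda^n$ and $s(\lambda)=v$, so $|h^{-n}(z)|=|\Lambda^n v|$ depends only on $v$; hence
\[
f_\beta(z)=\sum_{n\in\N^k}e^{-\beta r\cdot n}|h^{-n}(z)|=\sum_{n\in\N^k}e^{-\beta r\cdot n}|\Lambda^n v|\qquad(z\in Z(v)),
\]
which is precisely the weight $y_v$ of \cite[Theorem~6.1]{aHLRS} (this constancy is exactly the identity underlying the equivalence $\int f_\beta\,d\delta=1\Leftrightarrow y\cdot\varepsilon=1$ in Proposition~\ref{restriction}, and the matching of $\beta_{c_i}$ with $\ln\rho(A_i)$ comes from \cite[Proposition~7.3]{AaHR}). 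Integrating against the point masses then gives
\[
\int f_\beta\,d\delta=\sum_{v\in\Lambda^0}\varepsilon_v\,f_\beta(x_v)=\sum_{v\in\Lambda^0}\varepsilon_v\,y_v=y\cdot\varepsilon=1.
\]
With this in hand, Theorem~\ref{theorem1}(a) gives the KMS$_\beta$ state $\phi_\delta$ on $(\NT(X(\Lambda^\infty)),\alpha)$, and Proposition~\ref{restriction} (applied to $\delta$, whose vertex masses are $\varepsilon_v$) yields $\phi_\delta|_{\TCL}=\phi_\varepsilon=\phi$. Thus $\phi$ is the restriction of the KMS$_\beta$ state $\phi_\delta$, completing the proof.

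I expect the main obstacle to be conceptual rather than computational: recognising that the only datum of a KMS state of $\TCL$ that survives the passage to $\NT(X(\Lambda^\infty))$ is the tuple of cylinder masses $(\delta(Z(v)))_{v}$ --- this is guaranteed by Corollary~\ref{restrictioncor} --- so that the surjectivity question reduces to the elementary fact that every admissible vector $\varepsilon$ is realised by some measure, for which point masses suffice. Verifying that this point-mass measure meets the hypotheses of Theorem~\ref{theorem1}(a) and Proposition~\ref{restriction}, in particular the normalisation $\int f_\beta\,d\delta=1$, is the only place where the special structure of $\Lambda^\infty$ (the constancy of $f_\beta$ on cylinders) is used.
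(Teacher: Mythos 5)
Your proof is correct, and it follows the paper's overall reduction: both arguments invoke \cite[Theorem~6.1(c)]{aHLRS} to write the given state as $\phi_\varepsilon$ for a vector $\varepsilon$ with $y\cdot\varepsilon=1$, and both reduce the problem to producing a finite regular Borel measure $\delta$ on $\Lambda^\infty$ with $\delta(Z(v))=\varepsilon_v$ and $\int f_\beta\,d\delta=1$, after which Theorem~\ref{theorem1}(a) and Proposition~\ref{restriction} finish the argument. Where you genuinely differ is in the construction of $\delta$. The paper builds $\delta$ by a Kolmogorov-type inverse-limit argument: it recursively chooses weights $w_\mu$ for $\mu\in\Lambda^{lD}$ subject to $\sum_{\lambda\in v\Lambda^D}w_\lambda=\varepsilon_v$ and \eqref{5FINAL}, assembles measures $\delta_m$ on the finite sets $\Lambda^m$, checks the compatibility condition \eqref{formula78}, and appeals to \cite[Lemma~5.2]{aHKR} to obtain a measure on the inverse limit $\Lambda^\infty$. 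You instead observe that $f_\beta$ is constant on each cylinder $Z(v)$ with value $y_v$ --- since $|\sigma^{-n}(z)|=|\Lambda^n r(z)|$ for every $z\in\Lambda^\infty$ --- so that $\int f_\beta\,d\delta=y\cdot\varepsilon$ holds for \emph{any} finite Borel measure with vertex masses $\varepsilon_v$, and then the weighted point-mass measure $\sum_v\varepsilon_v\delta_{x_v}$ (with $x_v\in Z(v)$, nonempty because $\Lambda$ has no sources) satisfies all the hypotheses. This is a genuine simplification rather than a shortcut with hidden costs: the constancy of $f_\beta$ on cylinders is exactly the identity the paper itself relies on at the end of its proof (where it quotes the calculation of \cite[Proposition~7.4]{AaHR} to get $\int f_\beta\,d\delta=y\cdot\varepsilon=1$), so your argument uses no input beyond what the paper already needs, while avoiding the inverse-limit machinery entirely; the paper's construction only buys a measure spread over all of $\Lambda^\infty$ according to coherently chosen weights, which the proposition never requires.
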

\begin{proof}
	Suppose that $\phi$ is a KMS$_\beta$ state of $(\TCL,\alpha)$. Then \cite[Theorem~6.1(c)]{aHLRS} implies that there is a vector $\varepsilon \in [0,\infty)^{\Lambda^0}$ such that $y\cdot \varepsilon=1$ and $\phi=\phi_\varepsilon$. If $\delta$ is a measure on $\Lambda^\infty$ such that
	$\delta(Z(v))=\varepsilon_v$ for all $v\in \Lambda^0$ and $\int f_\beta\,d\delta=1$, then  Proposition~\ref{restriction} implies that  $\phi_\delta|_{\TCL}=\phi_\varepsilon$. So it suffices to show that there is such a measure.
	
	To see this, let $D:=(1,\dots,1)$ and $M:=\{lD:l\in \N\}$.  For each  $m,n\in M$ with $m\leq n$, define  $r_{m,n}: \Lambda^n\rightarrow \Lambda^m$ by $r_{m,n}(\lambda)=\lambda(0,m)$.   Remark~2.2 of \cite{KP} shows that we can view   $\Lambda^\infty$ as
	the inverse limit of the system $(\{\Lambda^m\},\{r_{m,n}\})_{m,n\in M}$. Let $\pi_m:\Lambda^\infty\rightarrow \Lambda^m$ be the canonical map
	defined by	$\pi_m(z)=z(0,m)$ for $z\in \Lambda^\infty$. Now any family of measures $\delta_m$ on $\Lambda^m$ with finite $\delta_0$ and with
	\begin{align}\label{formula78}
	\int (f\circ r_{m,n})\,d\delta_n=\int f\,d\delta_m\, \text{ for } m\leq n \text {  and } f\in C(\Lambda^n),
	\end{align} 
	gives a unique measure $\delta$ on $\Lambda^\infty$ such that 
	$\int (f\circ \pi_m)\,d\delta=\int f\,d\delta_m$  for  $f\in C(\Lambda^m)$
	(see, for example \cite[Lemma~5.2]{aHKR}). So we aim to build such a family of measures: 
	
 We recursively choose  weights  $\{w_{\eta}: \eta\in \Lambda  \, \text{ with } d(\eta)=lD \text { for some }l\geq 1\}$  such that
	$\sum_{\lambda\in v\Lambda^D} w_\lambda=\varepsilon_v,$
	and
	\begin{align}\label{5FINAL}
	\sum_{ \lambda\in s(\mu)\Lambda^D}w_{\mu\lambda }&=w_\mu,
	\end{align}
	for $v\in \Lambda^0$ and  $\mu\in \Lambda^{lD} \,(l\geq 1)$. Then we  set $\delta_0:=\varepsilon$ and  $\delta_{m}(\mu )=w_{\mu}$ for all $\mu\in \Lambda^{lD}$.
	
	Next we check \eqref{formula78} for these measures. Let $m\in M$. Since the characteristic functions of singletons span $C(\Lambda^m)$, it is enough to prove  \eqref{formula78} for $f=\chi_{\{\mu\}}$ and $\mu\in \Lambda^m$. First notice  that
	$\chi_{\{\mu\}}\circ r_{m,m+D}=\sum_{\lambda\in s(\mu)\Lambda^{D}}\chi_{\{\mu\lambda\}}.$
	Then we have
	\begin{align}\label{thiscalculations}
	\int \chi_{\{\mu\}}\circ r_{m,m+D}\,d\delta_{m+D}\notag&=\int
	\sum_{\lambda\in s(\mu)\Lambda^{D}}\chi_{\{\mu\lambda\}}\,d\delta_{m+D}
	=\sum_{\lambda\in s(\mu)\Lambda^{D}}\delta_{m+D}(\mu\lambda)\\
	&=\delta_m(\mu)=\int \chi_{\{\mu\}}\,d\delta_m\qquad\text{using }\eqref{5FINAL}.
	\end{align}
For each $n\in M$ with $m\leq n$, we have
$r_{m,n}=r_{m,m+D}\circ r_{m+D,m+2D}\circ\cdots \circ r_{n-D,n}.$
Then applying the calculation \eqref{thiscalculations} finitely many times gives
\[\int \chi_{\{\mu\}}\circ r_{m,m+n}\,d\delta_{m+n}=\int \chi_{\{\mu\}}\,d\delta_m.\]
This is precisely \eqref{formula78}. Thus  there is a unique measure $\delta$ on $\Lambda^\infty$ such that	
	\[\int \chi_{\{v\}}\circ \pi_{0}\, d\delta=\int \chi_{\{v\}}\, d \delta_0 \text{ for } v\in \Lambda^0.\]
Notice that  $\int \chi_{\{v\}}\circ \pi_{0}\, d\delta=\delta(Z(v))$ and $\int \chi_{\{v\}}\, d \delta_0=\delta_0(v)=\varepsilon_v$.
Thus $\delta(Z(v))=\varepsilon_v$.
 A similar calculation to the first paragraph of the proof of \cite[Proposition~7.4]{AaHR} shows that $\int f_\beta\,d\delta=y\cdot \varepsilon=1$. Thus $\delta$ has the required properties.
\end{proof}

\appendix
\section{Realising the universal  Nica-covariant representation as a doubly commuting representation}
In \cite{S}, Solel used different notation to study product systems
over $\N^k$. He defined a ``doubly commuting relation'' in \cite [Definition~3.8]{S} and  showed  that it is equivalent  to Fowler's Nica-covariance relation \cite[Remark~3.12]{S}. Equation  \eqref{FORMULA} in Proposition~\ref{solelprop}  is a translation of Solel's doubly commuting relation from his notation to Fowler's notation. The difficulty with this translation was that the doubly commuting relation contains a flip map between fibres which Solel used without an explicit formula.
We found a nice formula for this flip map in Lemma~\ref{pframe}(c), and  now  the translation seems trivial. In this  appendix,  we use our flip map to show that the universal Nica-covariant representation $\psi$ satisfies the  doubly commuting relation.  
 We first need to understand Solel's notation.

Let $A$ be a $C^*$-algebra and  $Y$ be a  right Hilbert $A$--$A$ bimodule. Suppose that $\pi$ is  representation  of $A$ on  $B(\HH)$ for a Hilbert space $\HH$.
Let $Y\odot \HH$ be the algebraic tensor product of $Y$ and $\HH$. It follows from \cite[Proposition~2.6]{tfb} that the formula
\begin{align}\label{appinnerproduct}
\big(y\odot r\, \big|\, y'\odot r'\big)=\big( r\,\big|\, \pi(\langle y , y'\rangle)r'\big)\text{ for } y\odot r, y'\odot r'\in Y\odot \HH
\end{align}
defines a  semi-definite inner product on $Y\odot\HH$. 
 Let $Y\otimes_\pi \HH$ be the completion
of $Y\odot \HH$ with respect to this semi-definite inner product (see \cite[Lemma~2.16]{tfb}). Observe that
 $Y\odot \HH$ is balanced over $A$ in the sense that 
\begin{align}\label{hil-balanced}
y\cdot a\otimes r=y\otimes \pi(a)r \text{ for }  y\in Y, a\in A, r\in\HH.
\end{align}

Given   $S\in \LL(Y)$ and  $U\in \pi(A)'$,  an argument similar to that of \cite[Proposition~2.66]{tfb} shows that there is a well-defined bounded operator
 $S\otimes U$ on $Y\otimes_\pi \HH$ such that
 \[S\otimes U(y\otimes r)=S(y)\otimes U(r) \text{ for } y\otimes r\in Y\otimes_\pi \HH.\]

Let $X$ be a product system of right Hilbert $A$--$A$ bimodules over $\N^k$ and  let $\theta$ be a Toeplitz representation of 
$X$ on  $B(\HH)$ for a Hilbert space $\HH$. Muhly and Solel showed in  \cite[Lemmas~ 3.4--3.6]{MS} that  for each $m\in \N^k$ and  associated fibre $X_m$, there is a well-defined map 
$\widetilde{\theta}_m:  X_m\otimes_{\theta_0} \HH\rightarrow \HH$ such that
\[\widetilde{\theta}_m(x\otimes r)=\theta_m(x)r \text{ for all }x\otimes r\in X_m\otimes_{\theta_0} \HH.\]

 Let $1_{m}, 1_{\HH}$ be the identity maps on $X_{m}$ and $\HH$ respectively, and $\widetilde{\theta}_{m}^{*}$ be the adjoint of $\widetilde{\theta}_{m}$. Suppose that 
 $t_{m,n}$  is the flip map between fibres  $X_{m}$ and $X_{n}$ as in Lemma~\ref{pframe}.  
 A representation $\theta$ is a \textit{doubly commuting representation} if  for every $1\leq i\neq j\leq k$, we have
\begin{align*}
{\widetilde{\theta}_{e_j}}^{*}\widetilde{\theta}_{e_i}=(1_{e_j}\otimes
\widetilde{\theta}_{e_i})(t_{e_i,e_j}\otimes 1_{H})(1_{e_i}\otimes\widetilde{\theta}_{e_j}^{*}).
\end{align*}
It is observed in \cite[Lemma~3.9(i)]{S} that a doubly commuting representation $\theta$ satisfies  
	\begin{align*}
	(1_{n}\otimes\widetilde{\theta}_{m})(t_{m,n}\otimes  1_{H})(1_{m}\otimes\widetilde{\theta}_{n} ^*)=\widetilde{\theta}_{n} ^*\widetilde{\theta}_{m}.
	\end{align*}  
for $m,n\in \N^k_+$ with $m\wedge n=0$. 
\begin{prop}\label{AASOLELPROP}
	Let $h_1,\dots,h_k$ be $*$-commuting and surjective  local homeomorphisms on a compact Hausdorff space $Z$ and let  $X$ be the associated product system  as in Proposition~\ref{prop1}. Take $m,n\in \N^k_+$ such that $m\wedge n=0$. Then	
	\begin{align}\label{AASOLEL}
	(1_{n}\otimes\widetilde{\psi}_{m})(t_{m,n}\otimes  1_{H})(1_{m}\otimes\widetilde{\psi}_{n} ^*)=\widetilde{\psi}_{n}^*\widetilde{\psi}_{m}.
	\end{align} 
\end{prop}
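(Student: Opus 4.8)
The plan is to verify the operator identity \eqref{AASOLEL} by evaluating both sides on a spanning family of elementary tensors and reducing the resulting equality to the formula \eqref{FORMULA} already established in Proposition~\ref{solelprop}(b). Since $m\wedge n=0$, the maps $h^m$ and $h^n$ $*$-commute, so I may fix a partition of unity $\{\rho_i\}$ as in Proposition~\ref{solelprop} with $\tau_i=\sqrt{\rho_i}$; by Lemma~\ref{pframe} the sets $\{\tau_i\circ h^n\}$ and $\{\tau_j\}$ are Parseval frames for $X_m$ and $X_n$ respectively, and the flip satisfies $t_{m,n}(\tau_i\circ h^n\otimes\tau_j)=\tau_j\circ h^m\otimes\tau_i$ by \eqref{flipformula}. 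Because the reconstruction formula writes every $x\otimes r\in X_m\otimes_{\psi_0}\HH$ as a finite combination of vectors $(\tau_i\circ h^n)\otimes r'$, it suffices to check \eqref{AASOLEL} on these.

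First I would record the frame expansion of the adjoint $\widetilde{\psi}_n^{*}$: for $\eta\in\HH$, the defining relation $\big(\widetilde{\psi}_n^{*}\eta\mid y\otimes s\big)=\big(\eta\mid\psi_n(y)s\big)$ together with the reconstruction formula for the frame $\{\tau_j\}$ of $X_n$ and the inner product \eqref{appinnerproduct} gives $\widetilde{\psi}_n^{*}\eta=\sum_j \tau_j\otimes\psi_n(\tau_j)^{*}\eta$. Applying $1_m\otimes\widetilde{\psi}_n^{*}$ to $(\tau_i\circ h^n)\otimes r$ and using the associativity isomorphism $X_m\otimes_{\psi_0}(X_n\otimes_{\psi_0}\HH)\cong (X_m\otimes_A X_n)\otimes_{\psi_0}\HH$ produces $\sum_j(\tau_i\circ h^n\otimes\tau_j)\otimes\psi_n(\tau_j)^{*}r$; the concrete flip \eqref{flipformula} turns this into $\sum_j(\tau_j\circ h^m\otimes\tau_i)\otimes\psi_n(\tau_j)^{*}r$, and a final application of $1_n\otimes\widetilde{\psi}_m$ yields $\sum_j(\tau_j\circ h^m)\otimes\psi_m(\tau_i)\psi_n(\tau_j)^{*}r$. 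This is the left-hand side of \eqref{AASOLEL} evaluated at $(\tau_i\circ h^n)\otimes r$.

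The right-hand side is $\widetilde{\psi}_n^{*}\widetilde{\psi}_m\big((\tau_i\circ h^n)\otimes r\big)=\widetilde{\psi}_n^{*}\big(\psi_m(\tau_i\circ h^n)r\big)$. Pairing both sides against an arbitrary $w\otimes s$ with $w\in X_n$, and using $\psi_0(a)\psi_m(x)=\psi_m(a\cdot x)$ together with \eqref{appinnerproduct}, reduces the desired equality to the single operator identity $\sum_j\psi_m\big(\langle w,\tau_j\circ h^m\rangle\cdot\tau_i\big)\psi_n(\tau_j)^{*}=\psi_n(w)^{*}\psi_m(\tau_i\circ h^n)$ in $\NT(X)$. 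This is an identity of exactly the shape of \eqref{FORMULA}, and I would finish by deducing it from \eqref{FORMULA} (equivalently, by re-running the Fock-representation computation of Proposition~\ref{solelprop}, which uses only the injectivity of $h^m,h^n$ on the supports of the $\tau$'s and the reconstruction formulas). I expect this last reduction to be the main obstacle: computing the left-hand side is routine once the explicit flip is in hand, but matching the two frame expansions requires careful bookkeeping with the balancing relations and the associativity isomorphisms of the iterated tensor products, and it is here — through the flip formula \eqref{flipformula} — that the $*$-commuting hypothesis does the essential work.

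Alternatively, one could bypass the explicit computation: as noted in the appendix, $\psi$ is Nica covariant, hence doubly commuting in the sense of \cite[Definition~3.8]{S} by \cite[Remark~3.12]{S}, and then \eqref{AASOLEL} is precisely \cite[Lemma~3.9(i)]{S}, once the abstract flip map used there is identified with our $t_{m,n}$ via Lemma~\ref{pframe}(c). I would present the direct verification as the main argument, since it is what makes the role of the explicit flip transparent, and mention this abstract shortcut as a remark.
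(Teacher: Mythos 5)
Your proposal is correct in substance and uses the same toolkit as the paper's proof --- the frame expansion \eqref{AAframe} of $\widetilde{\psi}_n^{*}$, the explicit flip \eqref{flipformula}, and the exchange formula \eqref{FORMULA} --- but it organizes the verification differently. The paper evaluates both sides of \eqref{AASOLEL} at a \emph{general} elementary tensor $x\otimes r$: the left-hand side is unwound by repeated use of the reconstruction formula and the flip, and the right-hand side is expanded with the frame $\{\tau_l\circ h^m\}$ and then \eqref{FORMULA} is applied to $\psi_n(\tau_l\circ h^m)^*\psi_m(x)$ (frame element in the first slot, general $x$ in the second), after which the triple sums are collapsed via the balancing relations and reconstruction. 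You instead evaluate only on the frame vectors $(\tau_i\circ h^n)\otimes r$ --- a legitimate reduction, since reconstruction plus balancing writes any $x\otimes r$ as a finite sum of such vectors --- which makes the left-hand side a one-line computation, and then you pair against $w\otimes s$ to reduce the whole proposition to the single operator identity
\begin{equation*}
\psi_n(w)^*\psi_m(\tau_i\circ h^n)=\sum_j\psi_m\big(\langle w,\tau_j\circ h^m\rangle\cdot\tau_i\big)\psi_n(\tau_j)^*.
\end{equation*}
This transposition (general element in the first slot, frame element in the second) is tidier bookkeeping than the paper's, and the reduction steps you give are all sound.

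The one under-justified step is the last one: the displayed identity is \emph{not} ``of exactly the shape of \eqref{FORMULA}.'' Substituting $x=\tau_i\circ h^n$, $y=w$ into \eqref{FORMULA} yields a \emph{double} sum $\sum_{i',j}\psi_m(\langle w,\tau_j\circ h^m\rangle\cdot\tau_{i'})\psi_n(\langle \tau_i\circ h^n,\tau_{i'}\circ h^n\rangle\cdot\tau_j)^*$, and collapsing it to your single sum needs a further argument. It does go through: using (T2), (T3) and \eqref{tazetaze}, the right-hand side of your identity equals $\psi_n(w)^*\psi_m(\tau_i\circ h^n)\,\psi^{(n)}(1_n)$, while \eqref{FORMULA} shows that $\psi_n(w)^*\psi_m(x)$ absorbs the projection $\psi^{(n)}(1_n)$ on the right (every summand ends in some $\psi_n(g)^*$, and $\psi^{(n)}(1_n)\psi_n(g)=\psi_n(g)$ by reconstruction); alternatively, your stated fallback of verifying the identity in the faithful Fock representation works verbatim, being a mild simplification of the proof of Proposition~\ref{solelprop}(b). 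So the proof closes, but this collapse is exactly where the real work lies --- it is the analogue of the paper's balancing-plus-reconstruction manipulations --- and it should be written out rather than waved at. Finally, the shortcut through Solel's results is fine as a remark, but identifying Solel's abstract flip with the concrete $t_{m,n}$ of Lemma~\ref{pframe}(c) is precisely the content the appendix is meant to make explicit, so it should not replace the direct verification.
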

Before starting the proof, notice that  we can view  $\psi$  as a representation on a Hilbert space $\HH$ (see \cite[Remark 3.12]{S}). So \eqref{AASOLEL} makes sense. We also need to compute the adjoint $\widetilde{\psi}_{n}^*: \HH\rightarrow X_{n}\otimes_{\psi_0} \HH$. The next lemma gives a formula for $\widetilde{\psi}_{n}^*$ in terms of a Parseval frame of $X_n$.
\begin{lemma}
	Let $\{\eta_j\}_{j=1}^d$ be a Parseval frame for the fibre $X_n$.   Then
	\begin{align}\label{AAframe}
	\widetilde{\psi}_n ^*(r)=\sum_{j=1}^{d}\eta_j\otimes \psi_{n}(\eta_j)^{*}r\text{ for $r\in \HH$}.
	\end{align}
\end{lemma}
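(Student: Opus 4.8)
The plan is to show that the finite-rank formula on the right-hand side of \eqref{AAframe} produces exactly the adjoint $\widetilde{\psi}_n^*$. Accordingly, I would first define a candidate operator $V\colon\HH\to X_n\otimes_{\psi_0}\HH$ by
\[
V(r):=\sum_{j=1}^d\eta_j\otimes\psi_n(\eta_j)^*r\qquad(r\in\HH).
\]
Because the Parseval frame $\{\eta_j\}_{j=1}^d$ is \emph{finite}, this is a finite sum of elementary tensors, so $V$ is a well-defined bounded operator and there are no convergence issues. It then suffices to prove the adjoint relation
\[
\big(\widetilde{\psi}_n(x\otimes r)\,\big|\,s\big)_\HH=\big(x\otimes r\,\big|\,Vs\big)
\]
for all $s\in\HH$ and all elementary tensors $x\otimes r$; these span a dense subspace of $X_n\otimes_{\psi_0}\HH$, so by boundedness this identifies $V$ with $\widetilde{\psi}_n^*$.

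The computation of the two sides is short. On the left, the defining property $\widetilde{\psi}_n(x\otimes r)=\psi_n(x)r$ gives $\big(\psi_n(x)r\,\big|\,s\big)=\big(r\,\big|\,\psi_n(x)^*s\big)$. On the right, expanding $Vs$ and applying the inner-product formula \eqref{appinnerproduct} with $\pi=\psi_0$ yields
\[
\big(x\otimes r\,\big|\,Vs\big)=\sum_{j=1}^d\big(r\,\big|\,\psi_0(\langle x,\eta_j\rangle)\psi_n(\eta_j)^*s\big).
\]
Hence the adjoint relation reduces to the single operator identity
\[
\sum_{j=1}^d\psi_0(\langle x,\eta_j\rangle)\psi_n(\eta_j)^*=\psi_n(x)^*\quad\text{in }B(\HH).
\]

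To prove this identity I would take adjoints of both sides. Using $\langle x,\eta_j\rangle^*=\langle\eta_j,x\rangle$ and the representation axiom $\psi_n(\eta_j)\psi_0(a)=\psi_n(\eta_j\cdot a)$, the adjoint of the left-hand side becomes
\[
\sum_{j=1}^d\psi_n(\eta_j)\psi_0(\langle\eta_j,x\rangle)
=\psi_n\Big(\sum_{j=1}^d\eta_j\cdot\langle\eta_j,x\rangle\Big)
=\psi_n(x),
\]
where the last step is the reconstruction formula \eqref{reconstruction} for the Parseval frame $\{\eta_j\}$. Since the adjoint of the right-hand side is also $\psi_n(x)$, the identity holds, and \eqref{AAframe} follows. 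There is no deep obstacle here; the only points requiring care are matching the balancing convention \eqref{hil-balanced} and the (conjugate-)linearity conventions so that \eqref{appinnerproduct} applies verbatim, and confirming that checking the adjoint relation on elementary tensors is enough. As a byproduct, since $\widetilde{\psi}_n^*$ is unique, the formula \eqref{AAframe} is automatically independent of the chosen Parseval frame.
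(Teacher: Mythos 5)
Your proof is correct and follows essentially the same route as the paper: both verify the adjoint relation against elementary tensors using the inner product formula \eqref{appinnerproduct}, the Toeplitz representation axioms, and the reconstruction formula for the Parseval frame $\{\eta_j\}$. The only cosmetic difference is that you reduce matters to the operator identity $\sum_{j=1}^d\psi_0(\langle x,\eta_j\rangle)\psi_n(\eta_j)^*=\psi_n(x)^*$ and prove it by taking adjoints, whereas the paper carries out the same manipulation directly inside the inner product.
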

\begin{proof}
	Fix $r\in \HH$ and let $y \otimes s\in X_{n}\otimes_{\psi_0} \HH$. Now we compute using \eqref{appinnerproduct}: 
		\begin{align*}
	\Big(\sum_{j=1}^{d}&\eta_j\otimes \psi_{n}(\eta_j)^{*}r\,\Big|\,y \otimes s\Big)=\sum_{j=1}^{d}\,\big(\psi_{n}(\eta_j)^{*}r\,\big|\, \psi_0 \big(\langle \eta_j,y\rangle\big)s \big)\\
	&= \Big(r\,\Big|\sum_{j=1}^{d}\psi_{n}(\eta_j)\psi_0\big(\langle \eta_j,y\rangle\big)s \Big)
	= \Big(r\,\Big|\,\sum_{j=1}^{d}\psi_{n}\big(\eta_j\cdot \langle \eta_j,y\rangle\big)s \Big)
	=\big(r\,\big|\,\psi_{n}(y)s\big).
	\end{align*}
	This is  precisely 	$\big( r\,\big|\,\widetilde{\psi}_{n}(y \otimes s)\big)$. Thus 
$\widetilde{\psi}_n^*(r)=\sum_{j=1}^{d}\eta_j\otimes \psi_{n}(\eta_j)^{*}r.$
\end{proof}
\begin{proof}[Proof of Proposition~\ref{AASOLELPROP}]
Let $x\otimes r\in X_m\otimes_{\psi_0}\HH$. We evaluate both sides of \eqref{AASOLEL} on $x\otimes r$. We will need to have Parseval 
frames for the fibres $X_m,X_n$. 	
 Let  $\{\rho_i:1\leq i\leq d\}$ be a  partition of unity  such that ${h^m}|_{\supp \rho_i},{h^n}|_{\supp \rho_i}$ are injective and suppose that  $\tau_i:=\sqrt{\rho_i}$. Note that $\{\tau_i\}$ forms a Parseval frame for both fibres $X_m,X_n$. Also since $m\wedge n=0$, $\{\tau_i\circ h^n\}$ and $\{\tau_i\circ h^m\}$ are Parseval frame for the fibres $X_m,X_n$, respectively.  
   
 We start computing  the left-hand side of  \eqref{AASOLEL} by applying the adjoint formula \eqref{AAframe} with the Parseval frame 
$\{\tau_j\} \subset X_n$. For convenience, set
\[\dagger:=(1_{n}\otimes\widetilde{\psi}_{m})(t_{m,n}\otimes  1_{H})(1_{m}\otimes\widetilde{\psi}_{n} ^*).\]
  Then  $\dagger(x\otimes r)=
(1_{n}\otimes\widetilde{\psi}_{m})(t_{m,n}\otimes 1_{H})\big(x\otimes\sum_{j=1}^{d}\tau_j\otimes \psi_{n}(\tau_j)^{*}r\big).$
Using the reconstruction formula for the Parseval frame $\{\tau_i\circ h^n\}\subset X_m$ gives
\begin{align*}
	\dagger(x\otimes r)&=\sum_{j=1}^{d}(1_{n}\otimes\widetilde{\psi}_{m})(t_{m,n}\otimes 1_{H})\Big(\Big(\sum_{i=1}^d\tau_i\circ h^n\cdot \big\langle\tau_i\circ h^n,x\big \rangle\Big)\otimes \tau_j\otimes \psi_{n}(\tau_j)^{*}r\Big)\\
	&=\sum_{i,j=1}^{d}(1_{n}\otimes\widetilde{\psi}_{m})(t_{m,n}\otimes 1_{H})\big(\tau_i\circ h^n\otimes \big\langle\tau_i\circ h^n,x\big \rangle \cdot \tau_j\otimes \psi_{n}(\tau_j)^{*}r\big).
	\end{align*}
Using  the  reconstruction formula for  the element $\langle\tau_i\circ h^n,x\rangle\cdot \tau_j\in X_n $ with the Parseval frame
 $\{\tau_l\} \subset X_n$, we have 
\[\dagger(x\otimes r)=\sum_{i,j=1}^{d}(1_{n}\otimes\widetilde{\psi}_{m})(t_{m,n}\otimes 1_{H})\Big(\tau_i\circ h^n\otimes \Big(\sum_{l=1}^d\tau_l\cdot \big\langle\tau_l,\big\langle\tau_i\circ h^n,x\big \rangle \cdot \tau_j\big\rangle\Big)\otimes \psi_{n}(\tau_j)^{*}r\Big).\]
Since the tensors are balanced (see \eqref{hil-balanced}), we have
\[\dagger(x\otimes r)=\sum_{i,j,l=1}^d(1_{n}\otimes\widetilde{\psi}_{m})(t_{m,n}\otimes 1_{H})\big(\tau_i\circ h^n\otimes \tau_l \otimes \psi_0\big(\big\langle\tau_l,\big\langle\tau_i\circ h^n,x\big \rangle \cdot \tau_j\big\rangle\big)\psi_{n}(\tau_j)^{*}r\big).\]
Now we apply (T3) and rearrange  this to get that
\[\dagger(x\otimes r)=\sum_{i,l=1}^{d}(1_{n}\otimes\widetilde{\psi}_{m})(t_{m,n}\otimes 1_{H})\Big(\tau_i\circ h^n\otimes \tau_l \otimes \psi_n\Big(\sum_{j=1}^d\tau_j\cdot \big\langle \tau_j,\big\langle x,\tau_i\circ h^n\big \rangle \cdot\tau_l\big\rangle\Big)^{*}r\Big).\]
We continue by using the flip map \eqref{flipformula} to get that
\begin{align}
\dagger(x\otimes r)\notag&=\sum_{i,l=1}^ d(1_{n}\otimes\widetilde{\psi}_{m})\Big(\tau_l\circ h^m\otimes \tau_i \otimes \psi_n\Big(\sum_{j=1}^d\tau_j\cdot \big\langle \tau_j,\big\langle x,\tau_i\circ h^n\big \rangle \cdot\tau_l\big\rangle\Big)^{*}r\Big)\\
\notag&=\sum_{i,l=1}^d\tau_l\circ h^m\otimes \psi_m(\tau_i) \psi_n\Big(\sum_{j=1}^d\tau_j\cdot \big\langle \tau_j,\big\langle x,\tau_i\circ h^n\big \rangle \cdot\tau_l\big\rangle\Big)^{*}r.
\end{align}
The reconstruction formula for the frame $\{\tau_j\}\subset X_n$ implies that
\begin{align}\label{Aleft}
\dagger(x\otimes r)=\sum_{i,l=1}^{d}\tau_l\circ h^m\otimes \psi_m(\tau_i) \psi_n\big(\big\langle x,\tau_i\circ h^n\big \rangle \cdot\tau_l\big)^{*}r.
\end{align}

Next we compute the right-hand side of \eqref{AASOLEL} by applying the adjoint formula \eqref{AAframe} with the Parseval frame $\{\tau_l\circ h^m\} \subset X_n$ to get that
\begin{align}
\widetilde{\psi}_{n} ^*\widetilde{\psi}_{m}(x\otimes r)\notag&=\widetilde{\psi}_{n} ^*\psi_m(x)r=\sum_{l=1}^d\tau_l\circ h^m\otimes 
\psi_n(\tau_l\circ h^m)^*{\psi}_{m}(x)r.
\end{align}
We use the formula  \eqref{FORMULA}	to rewrite $\psi_n(\tau_l\circ h^m)^*{\psi}_{m}(x)$.  We have 
\begin{align}
\widetilde{\psi}_{n} ^*\widetilde{\psi}_{m}(x\otimes r)\notag&=\sum_{l=1}^d\tau_l\circ h^m\otimes \Big(\sum_{i,j=1}^d
\psi_m\big(\big\langle \tau_l\circ h^m,\tau_j\circ h^m \big\rangle\cdot \tau_i\big)\psi_{n}\big(\langle x, \tau_i\circ h^n\rangle\cdot\tau_j\big)^*\Big)r\\
\notag&=\sum_{i,j,l=1}^d
	\tau_l\circ h^m\otimes \psi_0\big(\langle \tau_l\circ h^m,\tau_j\circ h^m \big\rangle\big)\psi_m(\tau_i)\psi_{n}\big(\langle x, \tau_i\circ h^n\rangle\cdot\tau_j\big)^*r\\
\notag&=\sum_{i,j,l=1}^d\tau_l\circ h^m\cdot\langle \tau_l\circ h^m,\tau_j\circ h^m \big\rangle\otimes\psi_m(\tau_i)\psi_{n}\big(\langle x, \tau_i\circ h^n\rangle\cdot\tau_j\big)^*r.
\end{align}
Now applying reconstruction formula for the  Parseval frame $\{\tau_l\circ h^m\}\subset X_n$ gives
\begin{align}\label{Aright}
\widetilde{\psi}_{n} ^*\widetilde{\psi}_{m}(x\otimes r)=\sum_{i,j=1}^d\tau_j\circ h^m\otimes\psi_m(\tau_i)\psi_{n}\big(\langle x, \tau_i\circ h^n\rangle\cdot\tau_j\big)^*r.
\end{align}
Comparing \eqref{Aright} and \eqref{Aleft} completes our proof of \eqref{AASOLEL}.
\end{proof}


\end{document}